\documentclass[11pt,reqno]{amsproc}



\usepackage{amsmath,amsfonts,amssymb,amsthm}
\usepackage[abbrev,lite,nobysame]{amsrefs}
\usepackage{mathrsfs}
\usepackage{graphics,graphicx}
\usepackage[usenames,dvipsnames]{color}
\usepackage{times}
\usepackage{bbm}
\usepackage[margin=1in]{geometry}

\usepackage[colorlinks=true, pdfstartview=FitV, linkcolor=BrickRed,citecolor=black, urlcolor=black]{hyperref}

\usepackage{mathtools}
\mathtoolsset{showonlyrefs}



\newtheorem{theorem}{Theorem}
\newtheorem{proposition}{Proposition}[section]
\newtheorem{lemma}[proposition]{Lemma}

\theoremstyle{definition}

\newtheorem{remark}[proposition]{Remark}

\numberwithin{equation}{section}




\newcommand\eps{\varepsilon}
\newcommand\e{{\rm e}}
\newcommand\dd{{\rm d}}
\newcommand\ddt{{\frac{\dd}{\dd t}}}

\def\l {\langle}
\def\r {\rangle}
\newcommand\de{{\partial}}

\newcommand\cA{{\mathcal A}}

\newcommand\cL{{\mathcal L}}
\newcommand\cM{{\mathcal M}}
\newcommand\cN{{\mathcal N}}
\newcommand\cP{{\mathcal P}}

\newcommand\cT{{\mathcal T}}

\newcommand{\norm}[1]{\left\lVert #1 \right\rVert}

\newcommand{\abs}[1]{\left\lvert #1 \right\rvert}



\makeatletter

\renewcommand\subsubsection{\@startsection{subsubsection}{3}%
\normalparindent{.5\linespacing\@plus.7\linespacing}{-.5em}
{\normalfont\bfseries}}


\def\@tocline#1#2#3#4#5#6#7{\relax
  \ifnum #1>\c@tocdepth 
  \else
    \par \addpenalty\@secpenalty\addvspace{#2}%
    \begingroup \hyphenpenalty\@M
    \@ifempty{#4}{%
      \@tempdima\csname r@tocindent\number#1\endcsname\relax
    }{%
      \@tempdima#4\relax
    }%
    \parindent\z@ \leftskip#3\relax \advance\leftskip\@tempdima\relax
    \rightskip\@pnumwidth plus4em \parfillskip-\@pnumwidth
    #5\leavevmode\hskip-\@tempdima
      \ifcase #1
       \or\or \hskip 1em \or \hskip 2em \else \hskip 3em \fi%
      #6\nobreak\relax
    \dotfill\hbox to\@pnumwidth{\@tocpagenum{#7}}\par
    \nobreak
    \endgroup
  \fi}
\makeatother


\newcommand{\NN}{\mathbb{N}}
\newcommand{\ZZ}{\mathbb{Z}}
\newcommand\TT {{\mathbb T}}
\newcommand\RR {{\mathbb R}}

\newcommand\m{\mathfrak{m}}

\newcommand{\PP}{\mathbb{P}}

\newcommand\tQ{{\widetilde Q}}
\newcommand\tW{{\widetilde W}}
\newcommand\cO{{\mathcal O}}
\newcommand\fm{{\mathfrak{m}}}


\begin{document}

\title[On the stability of viscous three-dimensional rotating Couette flow]{\vspace*{-2.5cm}
On the stability of viscous three-dimensional rotating Couette flow}

\author[M. Coti Zelati]{Michele Coti Zelati}
\address{Department of Mathematics, Imperial College London}
\email{m.coti-zelati@imperial.ac.uk}

\author[A. Del Zotto]{Augusto Del Zotto}
\address{Institute of Mathematics, University of Zurich}
\email{augusto.delzotto@math.uzh.ch}

\author[K. Widmayer]{Klaus Widmayer}
\address{Faculty of Mathematics, University of Vienna \& Institute of Mathematics, University of Zurich}
\email{klaus.widmayer@math.uzh.ch}

\subjclass[2020]{35Q35, 76D05, 76E07, 76U60}

\begin{abstract}

   We study the stability of Couette flow in the $3d$ Navier-Stokes equations with rotation, as given by the Coriolis force. Hereby, the nature of linearized dynamics near Couette flow depends crucially on the force balance between background shearing and rotation, and includes lift-up or exponential instabilities, as well as a stable regime. In the latter, shearing resp.\ rotational inertial waves give rise to mixing and dispersive effects, which are relevant for distinct dynamical realms. Our main result quantifies these effects through enhanced dissipation and dispersive amplitude decay in both linear and nonlinear settings: in particular, we establish a nonlinear transition threshold which quantitatively improves over the setting without rotation (and increases further with rotation speed), showcasing its stabilizing effect.

\end{abstract}

\maketitle

\setcounter{tocdepth}{2}
\tableofcontents

\section{Introduction}

The subject of this work are dynamics in the $3d$ Navier-Stokes equations as witnessed in a rotating frame of reference, i.e.\ solutions of the Navier-Stokes-Coriolis system
\begin{equation}\label{eq:NSC}
    \begin{cases}
        \de_tV+V\cdot\nabla V + \beta \vec\e_3\times V-\nu\Delta V +\nabla P=0,\\
        \nabla\cdot V=0,
    \end{cases}
\end{equation}
whereby $V:[0,\infty)\times\TT\times\RR\times\TT\to\RR^3$ is the incompressible ($\nabla\cdot V=0$) fluid velocity, $P:[0,\infty)\times\TT\times\RR\times\TT\to\RR$ the scalar pressure and we have chosen coordinates such that the axis of rotation is $\vec{e}_3$. The two parameters of the model are the speed of rotation $\beta$, reflected in the Coriolis force $\beta\vec{e}_3\times V$, and the viscosity $\nu>0$.
Our main result investigates the nonlinear stability of Couette flow\footnote{Rescaling arguments allow to treat the case of Couette flow with any (constant) slope, hence we are restricting here to the unit slope case -- see point \eqref{it:rescalings} of Remark \ref{rem:linear}.}
\begin{equation}
    V^s=(y,0,0), \qquad \de_yP^s=-\beta y,
\end{equation}
which is a stationary solution to \eqref{eq:NSC}. To understand the dynamics of perturbations, we write $V=V^s+u$ and obtain from \eqref{eq:NSC} that $u:[0,\infty)\times\TT\times\RR\times\TT\to\RR^3$ must satisfy
\begin{equation}\label{eq:perturb_system}
    \begin{cases}
        \de_t u+y\de_x u+ u^2\vec\e_1 + \beta \vec\e_3\times u  -\nu\Delta u +\nabla p^L= -(u\cdot\nabla) u -\nabla p^{NL},\\
        \nabla\cdot u=0,
    \end{cases}
\end{equation}
where the linear and nonlinear pressure terms are
\begin{align}
    \Delta p^L &=-2\de_xu^2 +\beta(\de_xu^2-\de_yu^1),\qquad \Delta p^{NL}= -\nabla\cdot(u\cdot\nabla u). 
\end{align}
As is clear from the form of the equations \eqref{eq:perturb_system}, the general dynamics of solutions is different from those that are independent of the periodic $x$- resp.\ $z$-variables. To take this into account, for a function $F:\TT\times\RR\times\TT\to\mathcal{V}$, $\mathcal{V}\in\{\RR,\RR^3\}$, we introduce the notations
\begin{equation}\label{eq:modes_notation}
\begin{aligned}
  \overline{F}_0(y)&=\frac{1}{4\pi^2}\int_{\TT^2}F(x,y,z)\dd x\,\dd z,\qquad \widetilde{F}_0(y,z)=\frac{1}{2\pi}\int_{\TT}F(x,y,z)\dd x -\overline{F}_0(y),\\
  F_{\neq}(x,y,z)&=F(x,y,z)-\frac{1}{2\pi}\int_{\TT}F(x,y,z)\dd x,
\end{aligned}  
\end{equation}
and refer to $\overline{F}_0$ as ``double zero'', $\widetilde{F}_0$ as ``simple zero'' and $F_{\neq}$ as ``non-zero'' modes of $F$. 

As a first step, we discuss the behavior of solutions to the linearization of \eqref{eq:perturb_system}, i.e.\ of
\begin{equation}\label{eq:linearized}
        \de_t u+y\de_x u+ u^2\vec\e_1 + \beta \vec\e_3\times u  -\nu\Delta u +\nabla p^L=0,\qquad \nabla\cdot u=0.
\end{equation} 
The key quantity hereby is the Bradshaw-Richardson\footnote{sometimes also called Pedley number \cites{P69,LC97}} number (see e.g.\ \cites{B69,H18})
\begin{equation}\label{eq:BR-number}
    B_\beta:=\beta(\beta-1)\in [-1/4,\infty).
\end{equation}
It captures and quantifies the stable resp.\ unstable nature of dynamics in \eqref{eq:linearized}:
\begin{theorem}[Linear dynamics]\label{thm:linear}
Let $u\in C_t([0,\infty);L^2(\TT\times\RR\times\TT))$ solve \eqref{eq:linearized}. Then 
\begin{equation}\label{eq:double_zero_lin}
   \overline{u}^2_0(t)=0,\qquad \overline{u}^j_0(t)=\e^{\nu t\de_{yy}}\overline{u}^{j}_0(0),\quad j\in\{1,3\}, 
\end{equation}
and we have the following behavior depending on $B_\beta$:
    \begin{enumerate}
        \item\label{it:linear_liftup} (Lift-up instabilities for simple zero modes when $B_\beta=0$) When $\beta=0$, the well-known lift-up instability occurs for $\widetilde{u}_0$ (see e.g.\ \cite{CWZ20}*{Section 2.1}), while for $\beta=1$ the simple zero modes witness a ``rotated'' lift-up instability:
        \begin{equation}\label{eq:liftup_lin}
          \widetilde{u}^1_0(t)=\e^{\nu t\Delta} \widetilde{u}^{1}_0(0),\quad \widetilde{u}^2_0(t)=\e^{\nu t\Delta}  \widetilde{u}^{2}_0(0)-t\e^{\nu t\Delta}\de_z^2\Delta^{-1}\widetilde{u}^{1}_0(0),\quad \widetilde{u}^3_0(t)=-\de_z^{-1}\de_y\widetilde{u}^2_0(t).
        \end{equation}
        \item\label{it:linear_unstable} (Exponential instabilities for simple zero modes when $B_\beta<0$, i.e.\ $0<\beta<1$) If $\sqrt{-B_\beta}>\nu$, a part of the simple zero modes undergoes an exponential norm inflation. More precisely, there exists an $L^2$-bounded orthogonal projection $\PP\neq 0$ such that
        \begin{equation}
            \norm{\PP\,\widetilde{u}^j_0(t)}_{L^2}\gtrsim \e^{\frac12(\sqrt{-B_\beta}-\nu) t}\norm{\PP\,\widetilde{u}_0(0)}_{L^2},\qquad j\in\{1,2,3\}.
        \end{equation}
        
        \item\label{it:linear_stable} (Stable dynamics when $B_\beta>0$, i.e.\ $\beta<0$ or $\beta>1$) We have enhanced dissipation and inviscid damping of the non-zero modes
        \begin{equation}\label{eq:main_lin_enhdiss}
            \norm{u^1_{\neq}(t)}_{L^2}+\l t\r\norm{u^2_{\neq}(t)}_{L^2} +\norm{u^3_{\neq}(t)}_{L^2}\lesssim c_\beta \e^{-\frac{1}{24}\nu t^3}\norm{u_{\neq}(0)}_{H^{2}},
        \end{equation}
        and dispersion of the simple zero modes
        \begin{equation}\label{eq:main_lin_disp}
            \norm{\widetilde{u}^j_0(t)}_{W^{2,\infty}}\lesssim c_\beta (\alpha t)^{-\frac13}\e^{-\nu t}\norm{\widetilde{u}_0(0)}_{W^{6,1}}, \qquad j\in\{1,2,3\},
        \end{equation}
        where $\alpha:=\sqrt{B_\beta}>0$ and $c_\beta:=\sqrt{\frac{(\beta-1)^2+\beta^2}{B_\beta}}>0$.
        
    \end{enumerate}
\end{theorem}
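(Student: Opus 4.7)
Since \eqref{eq:linearized} preserves the decomposition \eqref{eq:modes_notation}, I would handle the three pieces $\overline{u}_0$, $\widetilde{u}_0$ and $u_{\neq}$ separately. For the \emph{double zero} modes, averaging in $(x,z)$ kills all $x,z$-derivatives; incompressibility then gives $\de_y \overline{u}^2_0 = 0$, which with $L^2$-integrability forces $\overline{u}^2_0 \equiv 0$. This annihilates both the lift-up and Coriolis terms, yields $p^L \equiv 0$, and reduces the remaining equations to the heat equation, proving \eqref{eq:double_zero_lin}.

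For the \emph{simple zero} modes I would Fourier transform in $(y,z)$ with duals $(\eta,l)$. Incompressibility gives $\widehat{\widetilde{u}^3_0} = -(\eta/l)\widehat{\widetilde{u}^2_0}$ for $l\neq 0$, and the pressure equation $\Delta p^L = -\beta\de_y u^1$ (which holds on simple zero modes) yields $\widehat{p^L} = \beta i\eta (\eta^2+l^2)^{-1}\widehat{\widetilde{u}^1_0}$. The system then closes into the $2\times 2$ ODE
\begin{equation}
\de_t \begin{pmatrix}\widehat{\widetilde{u}^1_0}\\ \widehat{\widetilde{u}^2_0}\end{pmatrix} + \left(\nu(\eta^2+l^2)\mathbf{I}+M_{\eta,l}\right)\begin{pmatrix}\widehat{\widetilde{u}^1_0}\\ \widehat{\widetilde{u}^2_0}\end{pmatrix} = 0, \qquad M_{\eta,l}=\begin{pmatrix}0 & 1-\beta\\ \beta l^2/(\eta^2+l^2) & 0\end{pmatrix},
\end{equation}
whose eigenvalues are $\pm\sqrt{-B_\beta}\,|l|/\sqrt{\eta^2+l^2}$. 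This single equation encodes items \eqref{it:linear_liftup} and \eqref{it:linear_unstable}: for $B_\beta = 0$, $M_{\eta,l}$ is nilpotent, and Duhamel integration in the $\beta=1$ case (noting that $l^2/(\eta^2+l^2)$ is the symbol of $-\de_z^2\Delta^{-1}$) produces exactly \eqref{eq:liftup_lin}; for $B_\beta < 0$, $M_{\eta,l}$ has real eigenvalues with supremum $\sqrt{-B_\beta}$ approached as $\eta \to 0$, so choosing $\PP$ to be the spectral projector onto the unstable eigendirection over frequencies $\eta^2+l^2 \lesssim 1$ yields the claimed exponential growth.

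The substantive step is the stable regime $B_\beta>0$ for \emph{non-zero} modes. I would pass to the unsheared frame via the Fourier shift $\xi := \eta - kt$, after which $\de_t + y\de_x$ becomes $\de_t$ and the viscous symbol is $K(t)^2 := k^2 + (\xi+kt)^2 + l^2$, with $\int_0^t \nu K(s)^2\,\dd s \geq \nu k^2 t^3 / 12$ delivering the enhanced-dissipation factor $\e^{-\nu t^3/24}$. To absorb the Coriolis--pressure coupling I would recast the system in the symmetric variables $q = \de_z u^1 - \de_x u^3$ and $\Delta u^2$, with $u^1, u^3$ reconstructed via incompressibility. The skew part of the resulting linear system is driven by an analog of $M_{\eta,l}$ with $k\neq 0$; for $B_\beta>0$ this matrix is similar to a skew-symmetric one via a diagonal rescaling of conditioning $c_\beta$, so a weighted energy functional with time-dependent weights adapted to $K(t)$ is coercive and monotone, yielding \eqref{eq:main_lin_enhdiss}. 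The $\langle t\rangle^{-1}$ factor on $u^2$ arises because $u^2$ is recovered from $\Delta u^2$ through division by $K(t)^2 \gtrsim k^2 t^2$, which pays one power of $\langle t\rangle^{-1}$ after matching with the $H^2$ regularity of data.

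For the dispersive bound on simple zero modes with $B_\beta > 0$, $M_{\eta,l}$ has purely imaginary eigenvalues $\pm i\alpha|l|/\sqrt{\eta^2+l^2}$, producing the inertial-wave propagator $\e^{\pm it\omega(\eta,l)}\e^{-\nu(\eta^2+l^2)t}$ with $\omega(\eta,l) = \alpha|l|/\sqrt{\eta^2+l^2}$. A stationary-phase $L^1 \to L^\infty$ bound on $\int \e^{i(y\eta + zl \pm t\omega(\eta,l))}\chi(\eta,l)\,\dd\eta\,\dd l$, combined with a dyadic frequency decomposition needed because $\omega$ is homogeneous of degree zero, yields the Airy-type rate $(\alpha t)^{-1/3}$ from the cubic Hessian degeneracy of $\omega$ along its critical rays. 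Combined with the heat-kernel factor $\e^{-\nu t}$ on unit-scale frequencies and the Sobolev losses encoded by $W^{6,1} \to W^{2,\infty}$, this gives \eqref{eq:main_lin_disp}. The main obstacle will be step three: engineering the weighted energy so that it simultaneously controls the Coriolis-pressure coupling with constants tracking $c_\beta$, preserves the enhanced-dissipation rate $\nu t^3/24$, and captures the $\langle t\rangle^{-1}$ inviscid damping on $u^2$ without loss; a secondary difficulty is the stationary-phase bound of step four, where the homogeneity of $\omega$ demands a delicate dyadic argument matched against the viscous kernel.
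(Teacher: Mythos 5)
Your decomposition into double-zero, simple-zero, and non-zero modes, and your treatment of the first two, track the paper closely. The $2\times 2$ Fourier ODE you set up directly in $(\widehat{\widetilde{u}^1_0}, \widehat{\widetilde{u}^2_0})$ with the coupling matrix $M_{\eta,l}$ is an equivalent reformulation of the paper's constant-coefficient system \eqref{eq:linearsystem-QOmega_zer0} in $(\widetilde Q_0, \widetilde\Omega^2_0)$, with the same eigenvalues $\pm\sqrt{-B_\beta}\,|l|/|\eta,l|$, and both routes recover \eqref{eq:liftup_lin}, the exponential growth, and the purely imaginary eigenvalues in the stable regime. (A minor slip: $l^2/(\eta^2+l^2)$ is the symbol of $+\partial_z^2\Delta^{-1}$, not its negative; the minus sign in \eqref{eq:liftup_lin} then comes out of the Duhamel integral.) The dispersive bound is likewise the same stationary-phase estimate that the paper imports from \cite{CZDZW24}.

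The substantive gap is in your non-zero mode argument, step three. You correctly note that the Coriolis coupling can be symmetrized by a diagonal rescaling of $\omega^2$, but you treat the rescaling as if it were the static conditioning factor $c_\beta$. In fact the rescaling is $W = \sqrt{\beta/(\beta-1)}\,|\nabla_L|\,\Omega^2$, and $|\nabla_L|$ is \emph{time-dependent} in the moving frame, with symbol $|k,\eta-kt,l|$. Conjugating by a time-dependent symmetrizer does not merely symmetrize: it introduces an extra drift term $\partial_x\partial_y^L|\nabla_L|^{-2}W$ in the $W$-equation (this is precisely the middle term in \eqref{eq:linearsystem-QK}), with symbol $-k(\eta-kt)/|k,\eta-kt,l|^2$. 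This term changes sign at the critical time $t = \eta/k$; on $[\eta/k,\,\eta/k+O(\nu^{-1/3})]$ it has the destabilizing sign and is not dominated by the dissipation $\nu|k,\eta-kt,l|^2$, so that $|\widehat Q|^2 + |\widehat W|^2$ can inflate by a factor of order $\nu^{-1/3}$ there. Hence the weighted energy you invoke is \emph{not} coercive and monotone as stated. The paper's remedy is the multiplier $\mathfrak{m}$ of \eqref{def:mult-m}, built to decay by exactly that factor on that window; it is what makes the pointwise energy identity \eqref{eq:nonzero_linear_energy} close, via Lemma \ref{lemma:prop-m}. You flag step three as "the main obstacle," which is the right instinct, but without isolating the term $\partial_x\partial_y^L|\nabla_L|^{-2}W$ and constructing a multiplier to absorb it, the argument does not yield \eqref{eq:main_lin_enhdiss}.
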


A more detailed description of these dynamics (including in particular the regimes with instabilities) can be found in Section \ref{sec:linear}, where the proof of Theorem \ref{thm:linear} is given via direct computations.
\begin{remark}\label{rem:linear}
We comment briefly on some key points:
\begin{enumerate}
    \item One sees directly that the double zero modes in \eqref{eq:linearized} follow a purely heat equation dynamic $\de_t \overline{u}^j_0-\nu\de_{yy}\overline{u}^j_0=0$, $j=1,3$, while $\overline{u}_0^2=0$ if $u$ is divergence-free and integrable, leading to \eqref{eq:double_zero_lin}.
    \item The simple zero modes witness a coupling of the equations due to the background shearing and rotation, but no transport effects. In particular, they satisfy a constant coefficient system for two (suitable) scalar unknowns, the eigenvalues of which lead to the dynamics described above. In case $\beta=1$, \eqref{eq:liftup_lin} follows directly. Otherwise, after a Fourier transform one can diagonalize this system to find the eigenvalues for a frequency $(0,\eta,l)\in\ZZ\times\RR\times\ZZ$ given by 
    \begin{equation}\label{eq:disp_relation}
    \begin{cases}
        -\nu\abs{\eta,l}^2\pm\sqrt{\beta(1-\beta)}\abs{l}\abs{\eta,l}^{-1}, &0<\beta<1,\\
        -\nu\abs{\eta,l}^2\pm i\sqrt{\beta(\beta-1)}\abs{l}\abs{\eta,l}^{-1}, &\beta<0\textnormal{ or } \beta>1.
    \end{cases}    
    \end{equation}
    This shows the potential exponential instability for $0<\beta<1$\footnote{The projection $\PP$ above simply corresponds to restricting to an appropriate set of frequencies.} and the oscillatory nature of the coupling between shearing and rotation in the stable regime $\beta<0$ or $\beta>1$. (We note that both effects affect all three components of $\widetilde{u}_0$.)
    \item\label{it:lin_mixing} The mixing effect of the transport term $\de_t+y\de_x$ is only reflected in the non-zero modes, which exhibit enhanced dissipation and inviscid damping. We state this quantitatively \eqref{eq:main_lin_enhdiss} only in the stable regime $B_\beta>0$, for which we also pursue a nonlinear result. (The analogous claim remains true in case $B_\beta=0$, see \cites{WZ21,HSX24dec}, and one may also hope for a qualitatively similar effect at least for the stable part of the dynamics in case $B_\beta< 0$.) On a technical level, the delicate interplay of these two effects is captured via a suitable multiplier $\m$, introduced in \cite{BGM17} and discussed later in Section \ref{ssec:nonzero_linear}. As compared to the case without rotation ($\beta=0$), the inviscid damping rate of $u^2_{\neq}$ in \eqref{eq:main_lin_enhdiss} is one order slower. This can be traced back to the coupling of different components of $u$ due to the Coriolis term $\beta\vec{e}_3\times u$ -- see the proof of \eqref{eq:main_lin_enhdiss} in Section \ref{ssec:nonzero_linear} below for more details.
    \item\label{it:Bbeta_singlimit} We highlight that the limit $B_\beta\searrow 0$ is singular for our estimates in that the bounds \eqref{eq:main_lin_enhdiss} and \eqref{eq:main_lin_disp} degenerate, since $c_\beta\to\infty$ as $B_\beta\searrow 0$. In contrast, $c_\beta$ is uniformly bounded when $B_\beta\to\infty$. Moreover, as can be seen from \eqref{eq:main_lin_disp}, the effective strength of rotation is given by $\alpha$. In the regime of fast rotation $\beta\gg 1$ one has $\alpha\sim\abs{\beta}$ and the bound \eqref{eq:main_lin_disp} improves with increasing $\alpha$: This quantifies the stabilizing effect of rotation, whereby a faster speed of rotation $\beta$ leads to more rapid decay of amplitudes of simple zero modes.
    \item\label{it:rescalings} By rescaling one sees that the scenario of dynamics near a more general Couette flow with slope $\sigma\neq 0$, i.e.\ $V^s_\sigma=(\sigma y,0,0)$, can be reduced to the present one: if $u_\sigma$ follows the linearized dynamics near $V^s_\sigma$, then $u(t,x,y,z):=\sigma^{-1} u_\sigma(\sigma^{-1} t,x,y,z)$ satisfies \eqref{eq:linearized} with $\beta\mapsto\sigma^{-1}\beta$, $\nu\mapsto\sigma^{-1}\nu$. In particular, this shows that increasing $\sigma$ (while keeping $\beta$ fixed) decreases the Bradshaw-Richardson number. When $\beta$ and $\sigma$ have the same sign, this can destabilize the flow since $B_{\sigma^{-1}\beta}<0$ for $\abs{\sigma}>\abs{\beta}$, while even when stability is maintained, amplitude decay is slowed down (whereas enhanced dissipation rates are increased according to the time-scale $\sigma t$). One checks directly that these arguments also apply for the nonlinear problem.\footnote{Moreover, at the cost of rescaling the spatial variables one can also keep $\nu$ fixed (e.g.\ considering $\sigma^{-\frac12}u_\sigma(\sigma^{-1}t,\sigma^{-\frac12}x,\sigma^{-\frac12}y,\sigma^{-\frac12}z)$), which gives another equivalence at least at fixed regularity.}

\end{enumerate}
\end{remark}

Our main result is a new nonlinear transition threshold in the (linearly) stable regime ($B_\beta>0$, see Theorem \ref{thm:linear}), below which essential features of the linear dynamics persist:
\begin{theorem}[Nonlinear stability]\label{thm:transition threshold}
    Let $N>3$ and $0<\nu<1$. There exist constants $c_1,c_2>0$ such that the following holds true:
    Let $u(0)$ be an initial datum for \eqref{eq:perturb_system} with
    \begin{equation}\label{eq:initial_data}
        \overline{u}_0(0)=\int_{\TT^2} u(0)= 0,\qquad \norm{u(0)}_{H^{N+2}\cap W^{6,1}}=:\eps>0.
    \end{equation}
    If 
    \begin{equation}\label{eq:transition_thm}
        B_\beta>0 \text{ and }\eps \leq c_1\nu^\frac89, \quad \text{or} \quad B_\beta>c_2\nu^{-1}\text{ and }\eps\leq c_1\nu^\frac56,
    \end{equation}
    there exists a unique global in time solution $u\in C_t\left([0,\infty);H^N(\TT\times\RR\times\TT)\right)$ of \eqref{eq:perturb_system}, which moreover satisfies the following:
    \begin{enumerate}
        \item Energy inequality:
        \begin{equation}\label{eq:energyineq_thm}
            \norm{u}_{L^\infty_t H^N}+\nu^{\frac16}\norm{u_{\neq}}_{ L^2_tH^N}+\nu^{\frac12}\norm{\nabla u}_{ L^2_tH^N}\lesssim C_\beta \eps.
        \end{equation}
        \item Inviscid damping and enhanced dissipation:
        \begin{equation}\label{eq:inviscid+enhanced_thm}
             \norm{u^1_{\neq}(t)}_{L^2}+\l t \r\norm{u^2_{\neq}(t)}_{L^2} +\norm{u^3_{\neq}(t)}_{L^2}\lesssim C_\beta \e^{-\frac{1}{24} \nu^{\frac13}t}\eps. 
        \end{equation}
        \item Dispersion: letting $\alpha:=\sqrt{B_\beta}$,
        \begin{equation}\label{eq:disp_thm}
            \|\widetilde{u}_{0}^j(t)\|_{W^{2,\infty}}\lesssim C_\beta ((\alpha t)^{-\frac13}\e^{-\nu t}+\alpha^{-\frac13}\nu^{-\frac23}\eps)\eps, \qquad j\in\{1,2,3\}. 
        \end{equation}
    \end{enumerate}
    Here, $C_\beta>0$ is a universal constant that is uniformly bounded for $B_\beta \geq 0.1$, but that degenerates $C_\beta\to\infty$ as $B_\beta\searrow 0$.
\end{theorem}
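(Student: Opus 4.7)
The plan is a bootstrap argument tailored to the mode decomposition \eqref{eq:modes_notation}. On the maximal smooth existence interval $[0,T^\ast)$ I would posit a priori bounds that are constant-multiple enlargements of the target estimates: namely the energy inequality \eqref{eq:energyineq_thm}, the enhanced dissipation/inviscid damping bound \eqref{eq:inviscid+enhanced_thm}, the dispersive decay \eqref{eq:disp_thm}, together with a heat-equation type $H^N$-bound for $\overline{u}_0^{1},\overline{u}_0^{3}$ (which is meaningful since $\overline{u}_0^{2}\equiv 0$ by divergence-freeness and integrability). The goal is to improve each constant by a factor of two, provided $\eps$ satisfies \eqref{eq:transition_thm}; once achieved, standard continuation yields $T^\ast=\infty$.

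For the linear building blocks I would lift the semigroup analysis of Section \ref{sec:linear} to variable-coefficient weighted energy estimates. For $u_{\neq}$ the central tool is the Bedrossian--Germain--Masmoudi multiplier $\fm(t,\nabla)$ from \cite{BGM17}, adjusted to the Coriolis-coupled system: working with the $\fm$-weighted energy $\tfrac12\norm{\fm\, u_{\neq}}_{H^N}^2$ produces coercive $\CK$-type terms which, under the bootstrap hypotheses, yield simultaneously the $\e^{-\nu^{1/3}t/24}$ enhanced dissipation and the $\jap{t}^{-1}$ inviscid damping of $u_{\neq}^2$ (with the loss of one power of $\jap{t}$ inherited from the Coriolis coupling, cf.\ point \eqref{it:lin_mixing} of Remark \ref{rem:linear}). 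For $\widetilde{u}_0$, which enjoys no mixing, I would Duhamel against the explicit semigroup with symbol \eqref{eq:disp_relation} and estimate it via the dispersive bound \eqref{eq:main_lin_disp} in $W^{2,\infty}$ together with an $L^2$-based energy identity at high regularity. For $\overline{u}_0$ the linear dynamics is just the heat flow \eqref{eq:double_zero_lin}, and the forcing $\overline{(u\cdot\nabla u)}_0$ only receives contributions from products of $x$-dependent modes.

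The nonlinear estimates are executed term-by-term after projecting the equation onto the three frequency sectors and tracking which component of $u$ appears in each factor. The worst contributions I expect are: (a) $\widetilde{u}_0\cdot\nabla\widetilde{u}_0$ feeding back into $\widetilde{u}_0$, controlled only by the dispersive rate $(\alpha t)^{-1/3}$ integrated against $\e^{-\nu t}$; (b) products of $\widetilde{u}_0$ with $u_{\neq}$ producing $u_{\neq}$, where dispersion compensates the lack of integrable $L^2$-decay of $\widetilde{u}_0$; and (c) non-zero $\times$ non-zero interactions producing simple- and double-zero modes, where the squared enhanced-dissipation factor $\e^{-\nu^{1/3}t/12}$ gives a $\nu^{-1/3}$ time integral. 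Balancing these contributions against the coercive gains from the $\fm$-functional and the available smoothing produces the threshold $\eps\lesssim\nu^{8/9}$; in the fast-rotation regime $B_\beta>c_2\nu^{-1}$, the $\alpha^{-1/3}$ prefactor in \eqref{eq:disp_thm} relaxes the constraint from (a) and allows the improved threshold $\eps\lesssim\nu^{5/6}$.

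The main obstacle I anticipate is (a): simple zero modes have no enhanced dissipation, so their self-feedback must be absorbed entirely by the rotation-induced dispersion from \eqref{eq:main_lin_disp}. This forces the argument to use $B_\beta>0$ in an essential way, with quantitative gains from large $B_\beta$ unlocking the sharper threshold. A closely related difficulty is the Coriolis coupling among all three components of $u_{\neq}$: unlike the $\beta=0$ case one cannot reduce to a scalar stream function, so the construction and coercivity of the $\fm$-weighted energy must be carried out at the level of the coupled system, and the resulting one-power loss in the $u_{\neq}^2$ inviscid damping must be threaded consistently through every nonlinear estimate involving factors of $u_{\neq}^2$.
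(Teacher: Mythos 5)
Your proposal captures the correct high-level strategy—bootstrap organized by the mode decomposition, an $\fm$-weighted energy for nonzero modes, dispersion via Duhamel for simple zero modes, heat flow plus absence of double-zero self-interactions for $\overline u_0$—and you correctly anticipate that simple zero modes must be controlled by dispersion since they see no mixing. However, there are concrete gaps that would prevent the argument from closing as sketched.

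First and most importantly, you propose running the $\fm$-weighted energy estimate directly on $\tfrac12\norm{\fm u_{\neq}}_{H^N}^2$. The paper instead introduces the good unknowns $Q=\Delta_L U^2$ and $W=\sqrt{\beta/(\beta-1)}\,|\nabla_L|\Omega^2$, and this is structural, not cosmetic: the precise rescaling of $\Omega^2$ makes the Coriolis coupling appear in the skew-symmetric form $\pm\alpha\de_z|\nabla_L|^{-1}$ (see \eqref{eq:linearsystem-QK}), so that it cancels in the joint energy $\norm{\fm Q_{\neq}}^2+\norm{\fm W_{\neq}}^2$, while also matching the regularities of $Q$ and $W$. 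Working with $u$ itself, the lift-up term $u^2\vec e_1$, the Coriolis term and the linear pressure do not present any skew-symmetry one can exploit, and the $\fm$-weighted energy estimate would not close. Note that already \cite{BGM17}, which you cite, uses the $(\Delta u^2,\omega^2)$ formulation rather than $u$. Second, the paper's multiplier is $\cA=\fm\cM_1\cM_2\e^{\delta\nu^{1/3}t}$: you hint at $\CK$-type coercivity but never construct the ghost weights, and $\cM_2$ in particular is what provides the extra time-integrated decay of $U^2_{\neq}$ (see \eqref{eq:extratimedecay}) used in the borderline interactions such as $\de_y^L U^j_{\neq}\de_j U^2_{\neq}$; without it, those terms only close at the $\cO(\nu)$ threshold. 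Finally, the bottleneck you identify as your item (a)—simple zero self-feedback—is not what determines $\nu^{8/9}$. In the paper the hierarchy is: dispersive amplitude decay of $\widetilde U_0$ (Proposition \ref{prop:nonlinear_dispersive_estimates}), then the double zero estimate (Proposition \ref{prop:doublezero_estimates}, exploiting $\overline u_0(0)=0$ and the vanishing of double-zero self-interactions because $\overline u^2_0\equiv 0$), then the simple zero and nonzero energies (Propositions \ref{prop:simple0_finalbound}, \ref{prop:estim_nonzero}). The $\nu^{8/9}$ threshold arises from the term $\alpha^{-1/3}\nu^{-8/3}\eps^3$ coming from interactions of simple zero with double zero modes, after inserting the double-zero bound; in the fast-rotation regime $\alpha\gtrsim\nu^{-1/2}$ this term is no longer the bottleneck and the $\nu^{-5/6}\eps$ contribution from nonzero\,$\times$\,nonzero interactions takes over, giving $\nu^{5/6}$.
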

We highlight that compared to the case without rotation ($\beta=0$) where the best known threshold is of order $\cO(\nu)$ \cite{WZ21}, here we obtain a higher threshold of order $\nu^{\frac89}\gg\nu$ which increases further with increasing speed of rotation \eqref{eq:transition_thm}, a clear and quantitative indication of the stabilizing effect of rotation. Without using this stabilizing effect, prior work \cite{HSX24sept} had established an $\cO(\nu)$ threshold also for \eqref{eq:perturb_system}.

Theorem \ref{thm:transition threshold} follows from a bootstrap argument that we discuss in Section \ref{sec:bootstrap} along with an overview of its proof, after having formulated the equations via suitable ``good unknonws'' in Section \ref{sec:proofs_setup}. We briefly comment on some further points of immediate relevance in Theorem \ref{thm:transition threshold}.

\begin{remark}\label{rem:nonlinear}
Several remarks are in order:
\begin{enumerate}
    \item\label{it:00-mean-vanish} The vanishing of the $x$-$z$-mean $\overline{u}_0(0)=0$ of $u(0)$ is a natural assumption: by \eqref{eq:double_zero_lin} the double zero modes $\overline{u}_0(t)$ obey a nonlinear heat equation in one variable, which in general precludes a nonlinear stability threshold larger than $\cO(\nu)$. However, exact vanishing is not necessary and this condition could be relaxed to a suitably quantified smallness consistent with the bounds in Proposition \ref{prop:doublezero_estimates}.

    \item The choice of a doubly periodic channel domain $\TT\times\RR\times\TT$ provides a simple setting to study shearing and rotational effects in the absence of boundaries  -- see also \cite{TTA10} and references therein for experimental results.

    \item In general it is expected that the transition threshold is quantified in terms of relevant parameters, which in our setting are the inverse Reynolds number $\nu$ and the Bradshaw-Richardson number $B_\beta$ -- see \eqref{eq:transition_thm}. While fast rotation thus improves the stability of the system, we note that its effect on the non-zero modes remains to be understood.
    
    \item In the linearly unstable regime $B_\beta\leq 0$, nonlinear transition thresholds are only known in the case of lift-up instabilities (i.e.\ $B_\beta=0$): For the non-rotating case $\beta=0$, this is given in \cite{WZ21} by $\cO(\nu)$ (which is conjectured to be sharp in general), whereas the setting of $\beta=1$ with the ``rotated'' lift-up instability \eqref{eq:liftup_lin} was recently shown to be stable at least to perturbations of order up to $\cO(\nu^2)$. In case of exponential instabilities $B_\beta<0$, numerical simulations \cites{YFMR92one,MFYRL95two} indicate the persistence of instabilities at the nonlinear level, but this is not rigorously understood.
\end{enumerate}
\end{remark}

\subsubsection*{Context}
A quantitative understanding of the stability of laminar flows is a central question in hydrodynamic stability theory, originating in the works of Rayleigh, Reynolds, and Kelvin \cites{rayleigh1879stability,reynolds1883xxix,kelvin1887stability}. While turbulence and its onset remain largely ill-understood, recent years have seen a lot of progress in the study of stable dynamics near shear flows in the Navier-Stokes  or Euler equations \cites{BM15, IJnon20, MZ20,CDLZ23,BGM17,BGM20,BGM22,CWZ20,WZ21,LWZ20Kolmo,BMV16,BVW18,DL23,WZZKolmo20,WZ23,DelZotto23,DL22,CZEW20,CLWZ20,MZ22}. In particular, linear mixing mechanisms near monotone shears are relatively well-understood and quantified. They have also been exploited to construct nonlinearly stable dynamics near stationary shears. The size of their basin of attraction (also known as the transition threshold) is hereby naturally quantified in terms of physically relevant parameters (in particular the Reynolds number), as already observed by Reynolds. 

The related literature is too vast to be surveyed here (see e.g.\ \cite{BGM19} and references therein for some overview), so we shall focus first on the setting of the $3d$ Navier-Stokes equations with high Reynolds number, proportional to $\nu^{-1}\gg 1$, i.e.\ \eqref{eq:NSC} with $\beta=0$. 
A classical and comparatively simple example of a stationary shear solution for these equations is Couette flow, near which the linearized dynamics can be computed explicitly.
On various partially or fully periodic domains and in Sobolev or Gevrey regularity, its nonlinear stability has been shown for perturbations of order $\cO(\nu)$ -- see \cites{BGM19,BGM20,BGM22,WZ21,CWZ20}.
The key mixing effect hereby is enhanced dissipation: this provides stability through decay of $L^2$-based norms of the non-zero modes $V_{\neq}$ on a timescale of order $\nu^{-\frac13}$, which is much shorter than the purely dissipative time scale $\nu^{-1}$. This is accompanied by inviscid damping, leading to a quadratic in time decay of $V^2_{\neq}$ in $L^2$-based spaces. Altogether this shows a meta-stable dynamic towards simple or double zero modes.
Due to an intrinsically $3d$, linear instability known as the lift-up effect, a transient growth of order $\nu^{-1}$ in the simple zero modes is indeed expected, so that the above stability results are conjectured to be sharp.

\subsubsection*{The effect of rotation.} In a rotating frame of reference, fluid motion is additionally subject to the Coriolis force, which is generally regarded as a stabilizing effect (see e.g.\ \cite{GS2007} for a review of rotational effects in the context of geophysical flows), and the Navier-Stokes-Coriolis system \eqref{eq:NSC} is thus a natural yet basic model. An effect of rotation in \eqref{eq:NSC} can then be seen in the linearized dynamics near the rest state $V=0$, which is given by dispersive inertial waves with dispersion relation $\Lambda$ satisfying $\Lambda^2(k,\eta,l)=\abs{l}^2\abs{k,\eta,l}^{-2}$, with $(k,\eta,l)\in\ZZ\times\RR\times\ZZ$. Depending on the geometry of the domain, these lead to amplitude decay or spatial averaging, thereby stabilizing the fluid motion. In particular, this can be used to construct (nonlinear) global solutions (in the purely periodic or Euclidean setting, see e.g.\ \cites{BMN00,CDGG06,GRM09,IT2013,ET2023}), provided the speed of rotation is sufficiently fast compared to the size of the initial data.

Thanks to counter-balancing pressure forces, a suitably oriented Couette flow is a stationary solution also in the presence of rotation \eqref{eq:NSC}. However, the resulting interplay of shearing and rotational forces on perturbations is surprisingly rich already at the linearized level, as we discuss in Theorem \ref{thm:linear}: depending on the relative strength of these effects as quantified by the Bradshaw-Richardson number $B_\beta$ in \eqref{eq:BR-number}, the behavior of simple zero modes\footnote{These do not witness the transport effect of the shearing background and satisfy a constant-coefficient system, see \eqref{eq:linearsystem-QK-simplezero}, which makes them natural analogues for the behavior of perturbations of the rest state.} includes exponential instabilities, a ``rotated'' lift-up instability as well as the aforementioned dispersive inertial waves. As commented on above, our main result exploits the latter effect via the amplitude decay it engenders for our domain, quantified via \eqref{eq:main_lin_disp} resp.\ \eqref{eq:disp_thm}. (We remark, however, that a full quantification of the combined effect of rotation and shearing also on the non-zero modes is still outstanding.) The diversity of these dynamics has also been explained with heuristics from physics in the so-called \emph{displaced particle approach} introduced by Tritton and Davies (see \cites{Tritton1985,Tritton92} for a more in-depth description) and refined later by Leblanc and Cambon \cite{LC97} to highlight the quintessentially three-dimensional nature of this problem.

\subsubsection*{A parallel with inhomogeneous fluids.}
In the geophysical fluids literature, parallels are often drawn between the effects of rotation and stratification (see e.g.\ \cites{V1970,F1980-10,B2014}). Roughly speaking, rotational Coriolis resp.\ buoyant forces tend to organize and stabilize fluid flows perpendicular to the direction of rotation resp.\ gravity by inhibiting motion in these directions. The underlying mechanisms -- at least in the linearized theory -- hereby are dispersive inertial resp.\ gravity waves with closely related, zero-homogeneous dispersion relations such as $\Lambda$ (see also our discussion in \cite{CZDZW24} and related works \cites{TS17,KLT2014,W19,GPW23}). As already commented on by Bradshaw in \cite{B69}, these parallels endure in the setting of linearized dynamics near (Couette) shear flow in the Navier-Stokes-Coriolis resp.\ $3d$ Boussinesq equations: in both settings linearly stable dynamics can be identified via physically relevant parameters, here the Bradshaw-Richardson number $B_\beta>0$ (and analogously the Richardson number for stably stratified flow).\footnote{In particular, we note that lift-up instabilities are suppressed in the stable regimes, see also \cite{CZDZ23}. This is witnessed also in \cite{L18} for the MHD setting, when the $3d$ Couette flow is coupled with a constant background magnetic field. However, the analogous oscillatory dynamics in this case are not dispersive.} However, due to the background shear flow, the overall dynamics are much richer -- see e.g.\ Theorem \ref{thm:linear} and the discussion thereafter. Together with our previous work \cite{CZDZW24}, the present article gives a quantitative validation to these heuristics also for the nonlinear dynamics: in both cases, we establish an improvement of the stability theory over the case without rotation resp.\ stratification, obtaining a threshold of at least $\cO(\nu^\gamma)$ for some $\gamma<1$. The key to this is the exploitation of the aforementioned dispersive nature of certain oscillations in the simple zero modes.

\subsection{Structure of the equations and choice of unknowns}\label{sec:proofs_setup}
As is clear from \eqref{eq:perturb_system}, the double zero modes evolve according to a particular simple dynamic: while $\overline{u}^2_0=0$ since $u$ is divergence-free and square-integrable, $\overline{u}^j_0$, $j=1,3$, obey the following nonlinear $1d$ heat equations on $\RR$:
\begin{equation}\label{eq:doublezero}
    \partial_t \overline{u}^j_0-\nu\de_y^2 \overline{u}^j_0=-\overline{\de_y(u^2u^j)}_0, \quad j=1,3.
\end{equation}
Rotation and shearing are thus only witnessed in the simple zero and non-zero modes. To understand their evolution, it is convenient to consider the unknowns
\begin{equation}\label{eq:qomega}
    q:=\Delta u^2,\qquad \omega^2:=\de_zu^1-\de_xu^3,
\end{equation}
from which, thanks to the incompressibility condition, all but the double zero modes of $u$ can be recovered as
\begin{equation}\label{eq:u_recover}
    \begin{aligned}
    u^1-\overline{u}^1_0 &=(\de_x^2+\de_z^2)^{-1}(\de_z\omega^2-\de_{xy}\Delta^{-1}q),\\
    u^2&=\Delta^{-1}q,\\
    u^3-\overline{u}^3_0&=-(\de_x^2+\de_z^2)^{-1}(\de_x\omega^2+\de_{yz}\Delta^{-1}q).
    \end{aligned}
\end{equation}
To account for the shearing effect of the Couette flow, it is natural to consider moving frame coordinates 
\begin{equation}\label{eq:moving_frame}
    (x,y,z)\mapsto (x-yt,y,z),
\end{equation}
along which differential operators transform as
\begin{equation}
    \nabla \mapsto \nabla_L=(\de_1,\de_2^L,\de_3)=(\de_x,\de_y^L,\de_z)=(\de_x,\de_y-t\de_x,\de_z), \quad \Delta\mapsto\Delta_L=\de_x^2+(\de_y^L)^2+\de_z^2.
\end{equation}
To highlight this perspective in the notation, we will use the convention of denoting functions in the moving frame with capitalized letters, i.e.\ $f\mapsto F$.

The linearized system \eqref{eq:linearized} in the (moving frame) for the variables $Q,\Omega^2$ then reads
\begin{equation}\label{eq:linearsystem-QOmega_0}
    \begin{cases}
        \de_t Q+\beta\de_z \Omega^2=\nu\Delta_LQ,\\
        \de_t\Omega^2-(\beta-1)\de_z\Delta_L^{-1}Q=\nu\Delta_L\Omega^2,
    \end{cases}
\end{equation}
which together with the linear heat equation $\partial_t \overline{u}^j_0=\nu\de_y^2 \overline{u}^j_0$ for $\overline{u}^j_0$, $j=1,3$ (compare \eqref{eq:doublezero}) and \eqref{eq:u_recover} is equivalent to \eqref{eq:linearized}. We will use this formulation in the regime $B_\beta\leq 0$, and parts \eqref{it:linear_liftup} and \eqref{it:linear_unstable} of Theorem \ref{thm:linear} follow from direct computations (see Section \ref{sec:instable regime}).

In the stable regime $B_\beta>0$, we will work with the pair of ``good unknowns'' $(Q,W)$, where $W$ is a rescaled version of $\Omega^2$, that matches the regularity of $Q$ and balances the parameters in the equation, so as to highlight the oscillatory nature of the coupling between $Q$ and $W$:\footnote{One sees in particular that $\widetilde{Q}_0,\widetilde{W}_0$ are the natural unknowns for the constant coefficient system of the simple zero modes -- see also \eqref{eq:linearsystem-QK-simplezero}.}
\begin{equation}
    Q=\Delta_LU^2,\qquad W:=\sqrt{\frac{\beta}{\beta-1}}|\nabla_L|\Omega^2.
\end{equation}
Altogether, in this formulation and for $B_\beta>0$, the system \eqref{eq:perturb_system} is equivalent to 
\begin{equation}\label{eq:perturb_sys_nl}
    \begin{cases}
        \de_t Q +\alpha \de_z|\nabla_L|^{-1}W -\nu \Delta_L Q = -\de_y^L\Delta_L\mathcal{P}(U,U)-\Delta_L\mathcal{T}(U,U^2),\\
        \de_t W-\de_{x}\de_y^L\abs{\nabla_L}^{-2}W +\alpha \de_z|\nabla_L|^{-1}Q-\nu\Delta_L W=\sqrt{\frac{\beta}{\beta-1}}\abs{\nabla_L}\left(\de_z\mathcal{T}(U,U^1)-\de_x\mathcal{T}(U,U^3)\right),\\
        \de_t\overline{F}_0 -\nu\de_y^2\overline{F}_0=-\de_y\overline{(\widetilde{U}^2_0\widetilde{F}_0)}_0-\de_y\overline{(U^2_{\neq} F_{\neq})}_0,\qquad F\in\{U^1,U^3\},
    \end{cases}
\end{equation}
where
\begin{equation}
 \mathcal{T}(U,U^j)=U\cdot\nabla_LU^j,\quad 1\leq j\leq 3,\qquad \mathcal{P}(U,U)=-\Delta_L^{-1}(\nabla_L\otimes\nabla_L)(U\otimes U).   
\end{equation}

\subsection{Bootstrap for Theorem \ref{thm:transition threshold}}\label{sec:bootstrap}
The proof of Theorem \ref{thm:transition threshold} builds on the linear theory established in Theorem \ref{thm:linear}\eqref{it:linear_stable} and proceeds by establishing a bootstrap argument for energy type estimates in the formulation \eqref{eq:perturb_sys_nl} of the equations. These energies involve time and frequency weights combined in the main Fourier multiplier $\cA$, which is defined as
\begin{equation}\label{eq:A-prelim}
    \cA=\m\cM \e^{\delta\nu^{\frac13}t},
\end{equation}
see Section \ref{sec:prelim} for the details. Our main argument is the following:

\begin{theorem}[Bootstrap]\label{thm:bootstrap}
Under the hypotheses of Theorem \ref{thm:transition threshold}, assume that for some $T>0$ the following bounds hold for $t\in[0,T]$.
\begin{enumerate}
\item\label{it:00-btstrap} The double zero modes satisfy
\begin{equation}\label{eq:00-btstrap}
\begin{aligned}
    \norm{\overline{U}^1_0}^2_{L^\infty_tH^N}+\nu\norm{\de_y\overline{U}^1_{0}}^2_{L^2_tH^N}\leq 100\eps^2,\\
     \norm{\overline{U}^3_0}^2_{L^\infty_tH^{N}}+\nu\norm{\de_y \overline{U}^3_0}^2_{L^2_tH^{N}}\leq 100\eps^2. 
\end{aligned}
\end{equation}

\item\label{it:s0-btstrap} The simple zero modes satisfy
\begin{equation}\label{eq:s0-btstrap}
\begin{aligned}
   \norm{\tQ_{0}}^2_{L^\infty_tH^N}+\nu\norm{\nabla \tQ_{0}}^2_{L^2_tH^N}\leq 100\eps^2,\\
     \norm{\tW_{0}}^2_{L^\infty_tH^N}+\nu\norm{\nabla \tW_{0}}^2_{L^2_tH^N}\leq 100\eps^2. 
\end{aligned}
\end{equation}

\item\label{it:non0-btstrap} The nonzero modes satisfy
\begin{align}
     \norm{\cA Q_{\neq}}^2_{L^\infty_tH^N}+\nu\norm{\nabla_L\cA Q_{\neq}}^2_{L^2_tH^N}+\norm{\sqrt{-\frac{\dot \cM}{\cM}}\cA Q_{\neq}}^2_{L^2_tH^N}\leq 100\eps^2,\label{eq:btstrap_Q_neq}\\
     \norm{\cA  W_{\neq}}^2_{L^\infty_tH^N}+\nu\norm{\nabla_L\cA W_{\neq}}^2_{L^2_tH^N}+\norm{\sqrt{-\frac{\dot \cM}{\cM}}\cA W_{\neq}}^2_{L^2_tH^N}\leq 100\eps^2.\label{eq:btstrap_W_neq}
\end{align}
\end{enumerate}
Then the same bounds hold with $100$ replaced by $50$.
\end{theorem}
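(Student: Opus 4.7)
The plan is to close the bootstrap via $H^N$-energy estimates on each of the three mode classes in the reformulated system \eqref{eq:perturb_sys_nl}, using at each stage the linear structure underlying Theorem \ref{thm:linear}\eqref{it:linear_stable} together with the bootstrap hypotheses \eqref{eq:00-btstrap}--\eqref{eq:btstrap_W_neq} to absorb the nonlinear forcings.

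For the double zero modes, \eqref{eq:doublezero} is a forced $1d$ heat equation, and a standard $H^N$ energy identity produces the dissipative gain $\nu\norm{\de_y \overline{U}^j_0}_{H^N}^2$ on the good side, together with a nonlinear contribution that one bounds via Cauchy--Schwarz by a product of $L^2_t H^N$ norms of $\widetilde{U}_0$ and $U_{\neq}$; these are supplied respectively by \eqref{eq:s0-btstrap} and by the enhanced-dissipation CK term in \eqref{eq:btstrap_Q_neq}--\eqref{eq:btstrap_W_neq}. The simple zero modes admit the same scheme: noting that $\de_x\equiv 0$ on these modes (so $\Delta_L=\Delta$), the oscillatory coupling $\alpha\de_z\abs{\nabla}^{-1}$ between $\tQ_0$ and $\tW_0$ is skew-symmetric in the combined $H^N$-energy and drops out, leaving a pure heat estimate for $(\tQ_0,\tW_0)$. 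The nonlinear forcings decompose into simple-zero self-interactions (absorbed using the dispersive decay \eqref{eq:disp_thm} via $L^\infty\times L^2$ H\"older) and non-zero by non-zero interactions (absorbed via the enhanced dissipation).

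The core of the argument is the bound \eqref{eq:btstrap_Q_neq}--\eqref{eq:btstrap_W_neq} for the non-zero modes. Time-differentiating $\norm{\cA Q_{\neq}}_{H^N}^2 + \norm{\cA W_{\neq}}_{H^N}^2$ along \eqref{eq:perturb_sys_nl} yields, on the good side, the standard dissipation $2\nu\norm{\nabla_L \cA(Q_{\neq},W_{\neq})}_{H^N}^2$, the enhanced-dissipation gain $2\delta\nu^{1/3}\norm{\cA(Q_{\neq},W_{\neq})}_{H^N}^2$ from the $\e^{\delta\nu^{1/3}t}$ factor in $\cA$, and the CK terms $2\norm{\sqrt{-\dot\cM/\cM}\,\cA(Q_{\neq},W_{\neq})}_{H^N}^2$ along with the analogous $\m$-CK contribution from differentiating $\cA$. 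Among the linear terms on the right-hand side, the oscillatory coupling $\alpha \de_z\abs{\nabla_L}^{-1}$ between $Q_{\neq}$ and $W_{\neq}$ cancels by skew-symmetry since $\cA$ is a scalar Fourier multiplier, while the term $-\de_x\de_y^L\abs{\nabla_L}^{-2}W$ in the $W$-equation -- the classical inviscid-damping obstruction -- is dominated pointwise in frequency by the $\m$-CK gain, which is the defining property of the BGM multiplier \cite{BGM17}.

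The technical heart of the proof then lies in bounding the nonlinear forcings in \eqref{eq:perturb_sys_nl}, for which I would decompose $U=\overline{U}_0+\widetilde{U}_0+ U_{\neq}$ and estimate each resulting trilinear piece by a small fraction of the good terms, closing at the claimed smallness. I expect the main obstacle to lie in two points. First, the inviscid damping rate $\jap{t}^{-1}$ of $U^2_{\neq}$ from \eqref{eq:main_lin_enhdiss} is one order slower than in the non-rotating setting (cf.\ Remark \ref{rem:linear}\eqref{it:lin_mixing}), forcing a delicate rebalancing of time-weights in nonlinear interactions involving the transport term. Second, and crucially for the improved threshold, the simple-zero by non-zero exchanges must leverage the $W^{2,\infty}$-dispersive decay \eqref{eq:disp_thm} of $\widetilde{U}_0$ through $L^\infty\times L^2$ H\"older bounds: it is the time-integration of the $(\alpha t)^{-1/3}$ factor against the enhanced-dissipation decay $\e^{-\delta\nu^{1/3}t}$ that produces the improved $\nu^{8/9}$ threshold in \eqref{eq:transition_thm}, sharpening further to $\nu^{5/6}$ in the fast-rotation regime $B_\beta > c_2\nu^{-1}$ via the additional gain in $\alpha$.
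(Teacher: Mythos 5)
The broad architecture you sketch --- $H^N$-energy estimates mode by mode, with the oscillatory $\alpha\de_z\abs{\nabla_L}^{-1}$ coupling cancelling by skew-symmetry and the $\m$-CK term absorbing the $\de_x\de_y^L\abs{\nabla_L}^{-2}W$ obstruction --- is indeed what the paper does. However, there is a genuine gap in how the improved $\nu^{8/9}$ threshold is actually produced, centered on the double zero modes and the hierarchical structure of the argument. You propose to bound the simple-zero contribution to \eqref{eq:doublezero} by a product of $L^2_tH^N$ norms of $\widetilde{U}_0$ supplied directly by \eqref{eq:s0-btstrap}. After Young's inequality against the dissipation $\nu\norm{\de_y\overline{F}_0}_{L^2_tH^N}^2$, that yields a contribution of order $\nu^{-1}\cdot(\nu^{-1/2}\eps)^2\cdot\eps^2 = \nu^{-2}\eps^4$, which is $\lesssim\eps^2$ only when $\eps\lesssim\nu$ --- exactly the threshold one gets without rotation. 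The paper's essential move is a strict hierarchy: first establish the nonlinear dispersive $L^\infty$ decay of $\widetilde{U}_0$ (Proposition \ref{prop:nonlinear_dispersive_estimates}, via Duhamel and the linear estimate \eqref{eq:lin_disp_est}), obtaining $\norm{\widetilde{U}_0(t)}_{W^{2,\infty}}\lesssim\alpha^{-1/3}t^{-1/3}\e^{-\nu t}\eps + \alpha^{-1/3}\nu^{-2/3}\eps^2$; then feed this $L^\infty\times H^N$ H\"older structure into the double zero estimate to obtain the strictly improved bound $\norm{\overline{F}_0}^2_{L^\infty_tH^{N}}+\nu\norm{\de_y\overline{F}_0}^2_{L^2_tH^{N}}\lesssim (\nu^{-1/2}\eps+\alpha^{-1/3}\nu^{-5/3}\eps^2)^2\eps^2$ (Proposition \ref{prop:doublezero_estimates}); and only then propagate this into the simple zero and non-zero estimates. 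It is this chain that produces the $\alpha^{-1/3}\nu^{-8/3}\eps^3\cdot\eps^2$ error terms giving the $\nu^{8/9}$ (respectively $\nu^{5/6}$ for $B_\beta\gtrsim\nu^{-1}$) thresholds. Without dispersion at the double zero stage, the argument closes only at $\cO(\nu)$.

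Two related inaccuracies are worth flagging. First, your description of the simple and non-zero estimates omits the interactions with $\overline{U}^j_0$ (for instance the pressure term $\de_y\overline{U}^3_0\,\de_z\widetilde{U}^2_0$ in the $\widetilde{Q}_0$ equation, or $\de_y\overline{U}^j_0\,\de_j U^2_{\neq}$ in the non-zero energy); these are precisely where the improved double zero bound of Proposition \ref{prop:doublezero_estimates} must be used rather than the raw bootstrap \eqref{eq:00-btstrap}. Second, you locate the gain from dispersion in "simple-zero by non-zero exchanges" in the non-zero estimates, integrating $(\alpha t)^{-1/3}$ against $\e^{-\delta\nu^{1/3}t}$. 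In the paper, those exchanges use only the bootstrap $L^\infty_tH^N$ bound on $(\widetilde{Q}_0,\widetilde{W}_0)$; dispersion is exploited only in simple-zero $\times$ simple-zero products (for the double zero and simple zero estimates), with the $t^{-1/3}$ factor integrated against $\e^{-\nu t}$. Likewise, the slower $\jap{t}^{-1}$ inviscid damping of $U^2_{\neq}$ is not handled by "time-weight rebalancing" but by the ghost multiplier $\cM_2$ inside $\cA$ (see \eqref{def:multi-M2} and \eqref{eq:extratimedecay}), whose CK term yields the required time-integrated gain.
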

Hereby, as stated in Theorem \ref{thm:transition threshold} implicit constants may degenerate as $B_\beta\searrow 0$ (this is apparent already from the choice of unknowns, see also Remark \ref{rem:linear}\eqref{it:Bbeta_singlimit}), but since our focus is not on this limit but rather a regime including large $\abs{\beta}$ we have chosen to suppress this dependence from the notation and will only track constants in so far as they are not uniformly bounded for (say) $B_\beta\geq 0.1$.

Theorem \ref{thm:transition threshold} follows directly from this bootstrap: From standard well-posedness theory we obtain a local in time solution to \eqref{eq:perturb_sys_nl} that satisfies the assumptions of Theorem \ref{thm:bootstrap} on some time interval $[0,T]$, and the continuity of the norms involved implies Theorem \ref{thm:transition threshold}.

\begin{proof}[Overview of the proof of Theorem \ref{thm:bootstrap}]\label{overview_btrstrap}
The core difficulty of the proof is to obtain sufficiently strong bounds on the various bilinear interactions of the different modes. Roughly speaking, hereby interactions involving two non-zero modes are smallest,\footnote{at least for $\alpha\lesssim 1$ -- our proof shows that for sufficiently large $\alpha\gg 1$ the other interactions can be made as small or even smaller. This shows that for sufficiently large $\alpha$ the best threshold is indeed given by the largest contributions of the self-interactions of nonzero modes.} and in this simplified exposition we shall thus focus on the key step of controlling the contributions of double and simple zero modes. Since self-interactions of the double zero modes are avoided and simple zero modes are subject to dispersive decay, this allows to obtain the thresholds stated in Theorem \ref{thm:transition threshold}. More precisely, the proof of Theorem \ref{thm:bootstrap} follows a hierarchy determined by the improvements due to dispersive effects, which we outline next.

As a first step, in Proposition \ref{prop:nonlinear_dispersive_estimates} of Section \ref{sec:disp} we establish $L^\infty$ amplitude bounds for the simple zero modes $\widetilde{U}_0^j$, $1\leq j\leq 3$: we obtain these by applying the linear dispersive decay estimate \eqref{eq:lin_disp_est} to the Duhamel formulation of the equation satisfied by $\widetilde{U}_0$. For the initial data, this gives the same decay as in the linear estimate \eqref{eq:main_lin_disp}, whereas the nonlinear contributions can be quantified in terms of the size $\eps$ of the initial data and the rotation parameter $\alpha$.

From this we deduce an improved estimate for the double zero modes, see Proposition \ref{prop:doublezero_estimates} in Section \ref{sec:double_zero}, which under the assumptions \eqref{eq:transition_thm} of Theorem \ref{thm:transition threshold} in particular implies a strengthening of \eqref{eq:00-btstrap}. In addition to the dispersive estimates in Proposition \ref{prop:nonlinear_dispersive_estimates}, this relies on the absence of self-interactions of double zero modes in \eqref{eq:doublezero}, as well as their initial smallness (see \eqref{eq:initial_data} and Remark \ref{rem:nonlinear}\eqref{it:00-mean-vanish}).

Due to the oscillatory nature of the coupling between the simple zero modes (see e.g.\ \eqref{eq:simplezero_QK}), energy estimates need to be carried out jointly for the two unknowns $\widetilde{Q}_0,\widetilde{W}_0$ -- see Section \ref{sec:simple_zero}. Together with the aforementioned dispersive bounds in Proposition \ref{prop:nonlinear_dispersive_estimates} and the smallness of the double zero modes established before in Proposition \ref{prop:doublezero_estimates} we then obtain refined bounds for the simple zero modes in Proposition \ref{prop:simple0_finalbound}, which in particular imply an improvement over \eqref{eq:s0-btstrap}.

Finally, sharpened estimates for the nonzero modes are obtained in Proposition \ref{prop:estim_nonzero} of Section \ref{sec:non-zero}, yielding also improved bounds over \eqref{eq:btstrap_Q_neq}, \eqref{eq:btstrap_W_neq}. Hereby the main multiplier $\cA$ (see \eqref{eq:A-prelim} and Section \ref{sec:prelim}) -- applied equally to both $Q,W$ due to the coupled nature of the equations \eqref{eq:perturb_sys_nl} -- plays a crucial role: it combines the multiplier $\m$ already used for the linear analysis of enhanced dissipation and inviscid damping (see Remark \ref{rem:linear}\eqref{it:lin_mixing} and Section \ref{ssec:nonzero_linear}) with two ghost multipliers $\cM=\cM_1\cM_2$ (similar to ones used e.g.\ in \cites{L18,CZDZW24}) and a time weight. The ghost multipliers $\cM_1$ and $\cM_2$ are instrumental in tracking the enhanced dissipation of $u_{\neq}$ and additional inviscid damping decay of $u^2_{\neq}$ in a time integrated fashion (see e.g.\ Lemma \ref{lem:more_enh_dissip_neq}), respectively, while the time weight $\e^{\delta\nu^{\frac13}t}$ gives a simple way of tracking an explicit fixed time rate for the enhanced dissipation. Moreover, $\cA$ satisfies a product estimate (Lemma \ref{lem:m-product}) that is vital in establishing a threshold below $\cO(\nu)$ for certain interactions between nonzero and simple zero modes (see e.g.\ the proof of Lemma \ref{lem:neq_pressure}).

\end{proof}

\subsection{Additional notation}
Aside from the conventions presented above in Section \ref{sec:proofs_setup} and the notation introduced in \eqref{eq:modes_notation} for the double zero, simple zero, and nonzero modes, we collect here some basic notation. We denote the Fourier transform of a function $\varphi$ on $\TT\times\RR\times\TT$ as follows:
with $(k,\eta,l)\in\ZZ\times\RR\times\ZZ$ we let
\begin{equation}
    \widehat \varphi_{k,l}(\eta):=\frac{1}{4\pi}\int_{\TT\times\RR\times\RR}\varphi(x,y,z)\e^{-i(kx+\eta y+lz)}\,\dd x\,\dd y\,\dd z,
\end{equation}
so that the original function $\varphi$ can be recovered via
\begin{equation}
    \varphi(x,y,z)=\sum_{(k,l)\in\ZZ^2}\int_{\RR}\widehat\varphi_{k,l}(\eta)\e^{i(kx+\eta y+lz)}\,\dd\eta.
\end{equation}
We indicate the $L^2$ inner product with $\l\cdot,\cdot\r$ and the $L^2$ norm with $\norm{\cdot}_{L^2}$. 
Additionally, the $H^s$ norm, $s>0$, of a function $\varphi$ is defined as
\begin{equation}
    \norm{\varphi}^2_{H^s}:=\sum_{(k,l)\in\ZZ^2}\int_\RR\l k,\eta,l\r^{2s}|\hat\varphi_{k,l}(\eta)|^2\dd \eta,
\end{equation}
where
\begin{equation}
    \l k,\eta,l\r:=\sqrt{1+|k,\eta,l|^2}, \quad |k,\eta,l|^2:=k^2+\eta^2+l^2.
\end{equation}
For $1\leq p\leq \infty$ and $T>0$, we denote the norm on the space $L^p([0,T];H)$ with $H=L^2$ or $H=H^s$ as
\begin{equation}
    \norm{\varphi}_{L^p([0,T];H)}=\norm{\varphi}_{L^p_tH}.
\end{equation}
Finally, we use $a\lesssim b$ to indicate that $a\leq C b$ for a constant $C>0$ independent of relevant parameters.

\section{Linear analysis}\label{sec:linear}
In this section we consider the linearized dynamics \eqref{eq:linearized} and establish Theorem \ref{thm:linear}. As discussed above in Section \ref{sec:proofs_setup}, for the double zero modes these reduce to the $1d$ heat equation, while the dynamics of the simple zero and nonzero modes can be conveniently described via the unknowns \eqref{eq:qomega} as \eqref{eq:linearsystem-QOmega_0}, which we restate here for ease of reference:
\begin{equation}\label{eq:linearsystem-QOmega}
    \begin{cases}
        \de_t Q+\beta\de_z \Omega^2=\nu\Delta_LQ,\\
        \de_t\Omega^2-(\beta-1)\de_z\Delta_L^{-1}Q=\nu\Delta_L\Omega^2.
    \end{cases}
\end{equation}

\subsection{Dynamics of the simple zero modes}
As is clear from \eqref{eq:linearsystem-QOmega}, depending on $\beta$ we have several different dynamical regimes, which we treat next. Hereby it is particularly instructive to consider the motion of the simple zero modes in \eqref{eq:linearsystem-QOmega}, namely the system
\begin{equation}\label{eq:linearsystem-QOmega_zer0}
    \begin{cases}
        \de_t \widetilde{Q}_0+\beta\de_z \widetilde{\Omega}^2_0=\nu\Delta \widetilde{Q}_0,\\
        \de_t\widetilde{\Omega}^2_0-(\beta-1)\de_z\Delta^{-1}\widetilde{Q}_0=\nu\Delta\widetilde{\Omega}^2_0.
    \end{cases}
\end{equation}
Whether $\beta\in [0,1]$ or not gives completely different behaviors. 

\subsubsection{Instability regime for $0\leq\beta\leq 1$}\label{sec:instable regime}
In this regime we encounter both a secular and an exponential instability:

\medskip

\noindent $\bullet$ $\beta\in\{0,1\}$: \emph{lift-up effect}. 
When $\beta=0$ or $\beta=1$, lift-up occurs. This is classical in the case $\beta=0$ without rotation, but also witnessed here in case the speed of rotation matches the slope of the Couette flow, i.e.\ if $\beta=1$. In the latter case, the equations for the simple zero modes read
    \begin{equation}
     \begin{cases}
        \de_t \widetilde{Q}_0+\de_z \widetilde{\Omega}^2_0=\nu\Delta \widetilde{Q}_0,\\
        \de_t\widetilde{\Omega}^2_0=\nu\Delta\widetilde{\Omega}^2_0,
    \end{cases}   
    \end{equation}
    from which ones sees that $\Omega^2_0$ follows a pure heat equation dynamic, the initial data of which lead to a transient linear growth (for $t\ll \nu^{-1}$) in $\widetilde{Q}_0$: we have that
    \begin{equation}
      \widetilde{\Omega}^2_0(t)=\e^{\nu t\Delta}\widetilde{\Omega}^{2}_0(0),\qquad \widetilde{Q}_0(t)=\e^{\nu t\Delta}\widetilde{Q}_0(0)-t\,\e^{\nu t\Delta}\de_z\widetilde{\Omega}_0^{2}(0).
    \end{equation}
    
\medskip

\noindent $\bullet$ $0<\beta<1$: \emph{exponential instability}. When $0<\beta<1$, the simple zero mode equations \eqref{eq:linearsystem-QOmega_zer0} are diagonalized in the variables 
    \begin{equation}
        D_\pm:=\widetilde{Q}_0\pm \sqrt{\frac{\beta}{1-\beta}}\frac{\de_z}{\abs{\de_z}}\abs{\nabla}\widetilde{\Omega}^2_0,
    \end{equation}
    which satisfy
    \begin{equation}\label{eq:lin_instab}
        \de_t D_{\pm} \pm \sqrt{\beta(1-\beta)}\frac{\abs{\de_z}}{\abs{\nabla}}D_\pm=\nu \Delta D_{\pm}.
    \end{equation}
    \begin{figure}[ht]
    \begin{center}
    \includegraphics[width=7cm]{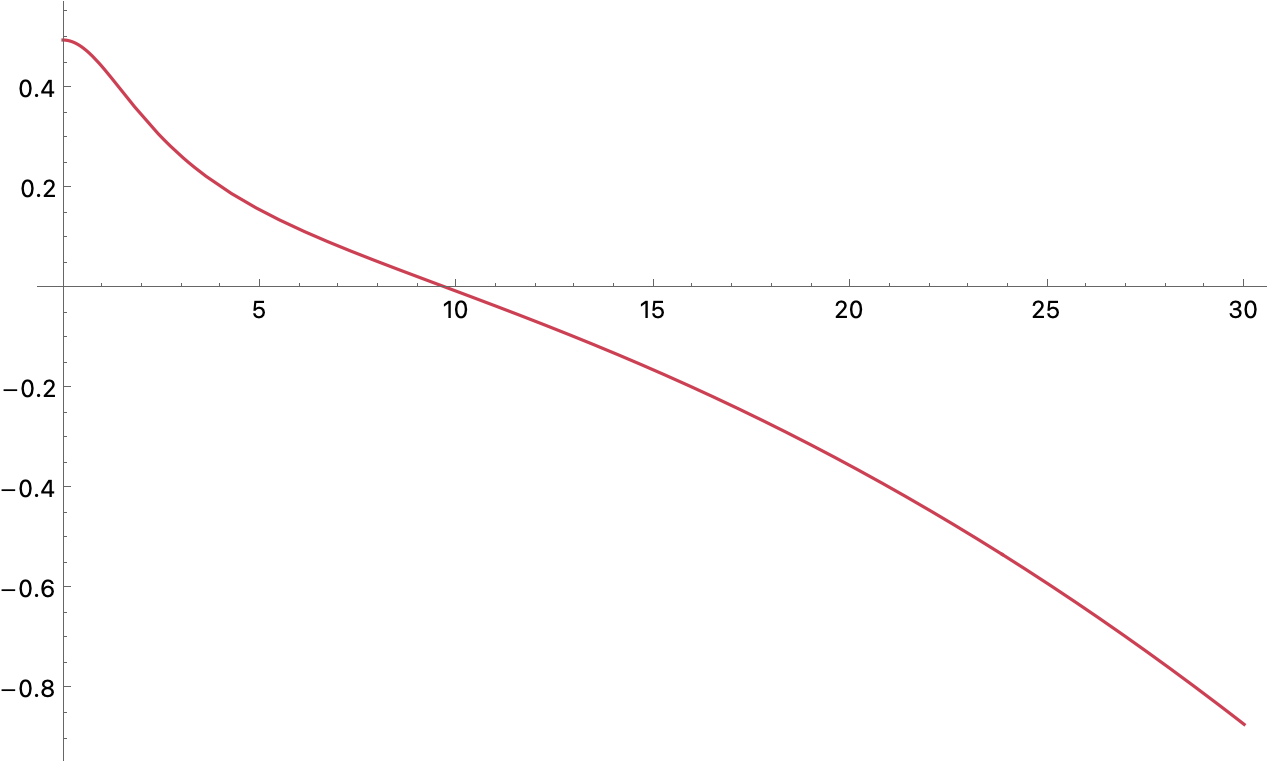}\hfill\includegraphics[width=7cm]{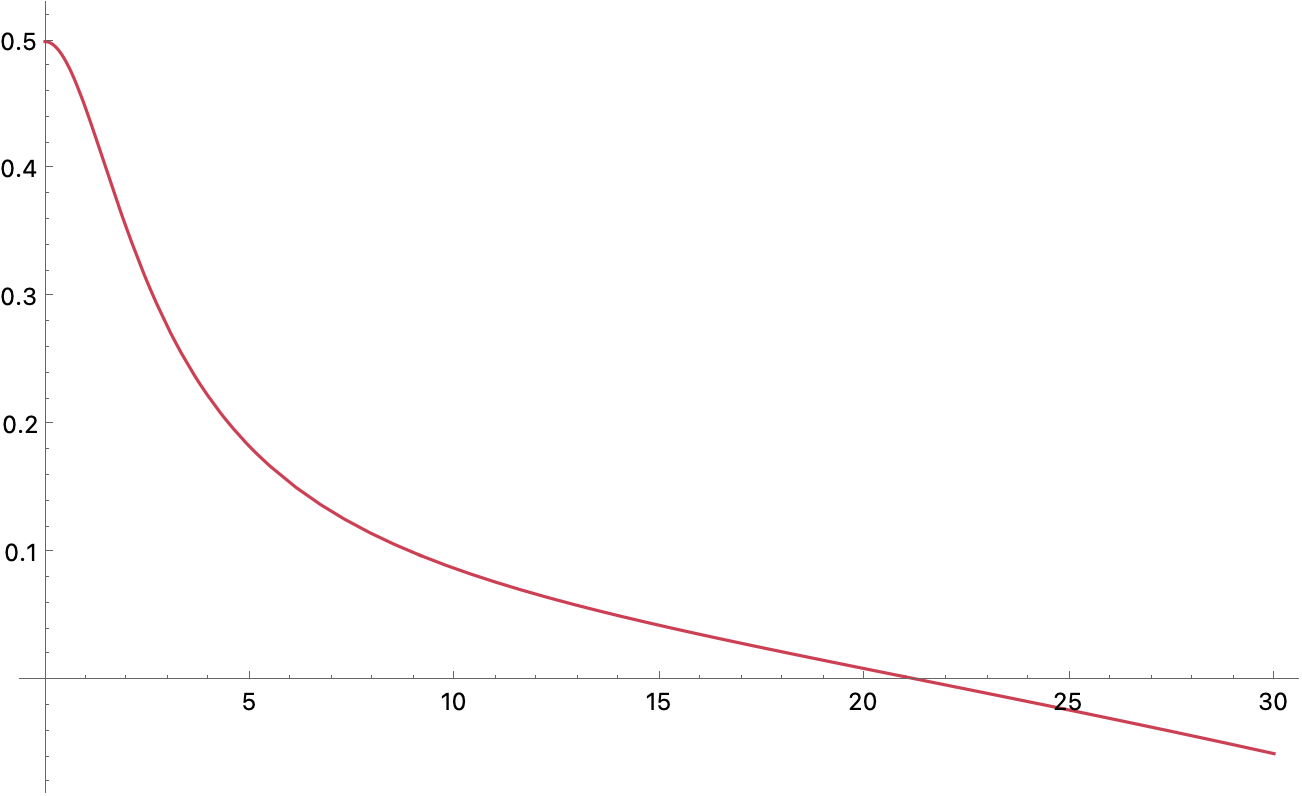}
    \caption{The (unstable) eigenmode $\lambda^+_{\nu,\beta}(\eta,l)=-\nu\abs{(\eta,l)}^2+\sqrt{\beta(1-\beta)}\frac{\abs{l}}{\abs{(\eta,l)}}$ as a function of $\eta\in[0,30]$ for fixed $l=2$, $\beta=0.5$ and $\nu=0.001$ (left) resp.\ $\nu=0.0001$ (right).}\label{fig:ev_plots}
    \end{center}
    \end{figure}
    For $D_-$, this yields the claimed exponential instability provided that $\nu\ll\min\{\sqrt{\beta},\sqrt{1-\beta}\}$. More precisely, upon taking a Fourier transform \eqref{eq:lin_instab} becomes
    \begin{equation}
        \de_t \widehat{D}_\pm(\eta,l,t)=\lambda^\pm_{\nu,\beta}(\eta,l)\widehat{D}_\pm(\eta,l,t),\qquad \lambda^\pm_{\nu,\beta}(\eta,l)=-\nu\abs{(\eta,l)}^2\mp\sqrt{\beta(1-\beta)}\frac{\abs{l}}{\abs{(\eta,l)}},
    \end{equation}
    see also Figure \ref{fig:ev_plots}. In particular, the component $\mathcal{F}^{-1}\left(\widehat{D}_-(0)\mathbbm{1}_{S}\right)$ of the initial data of $D_-(0)$ with frequency support in $S:=\{(\eta,l)\in \RR\times\ZZ:\lambda^-_{\nu,\beta}(\eta,l)>0\}$ undergoes an exponential amplitude inflation.
    To quantify this as in part \eqref{it:linear_unstable} of Theorem \ref{thm:linear}, it suffices to let
    \begin{equation}
        \emptyset\neq S':=\left\{(\eta,l)\in \RR\times\ZZ:\lambda^-_{\nu,\beta}(\eta,l)>\frac12(\sqrt{\beta(1-\beta)}-\nu)\right\}\subset S,
    \end{equation}
    define $\PP$ as $\widehat{\PP \varphi}=\mathbbm{1}_{S'}\widehat{\varphi}$ -- see also Figure \ref{fig:area_plots} -- and unwind the expressions of $\widetilde{u}_0$ in terms of $\widetilde{Q}_0,\widetilde{W}_0$.  
    \begin{figure}[ht]
    \begin{center}
    \includegraphics[width=7cm]{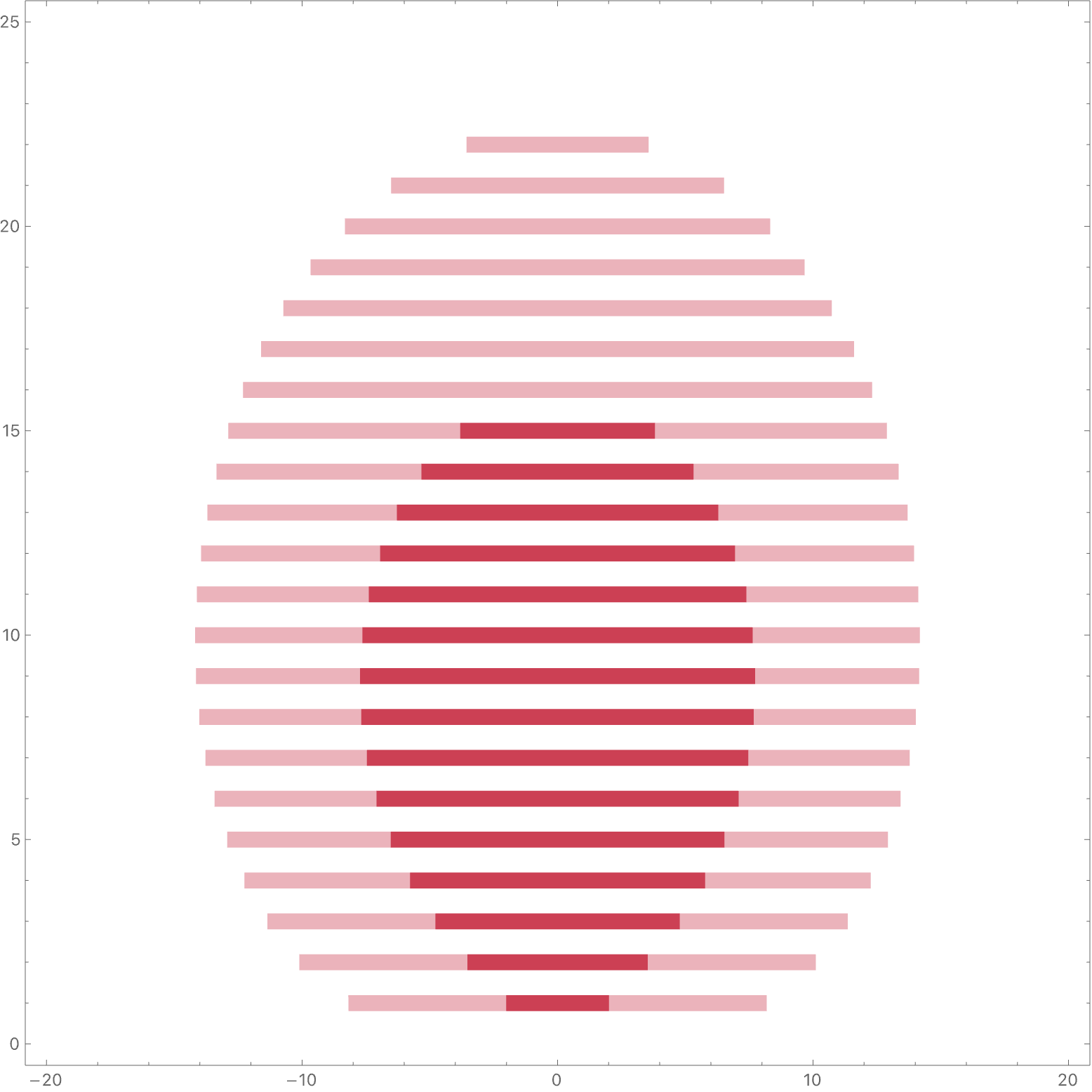}\hfill\includegraphics[width=7cm]{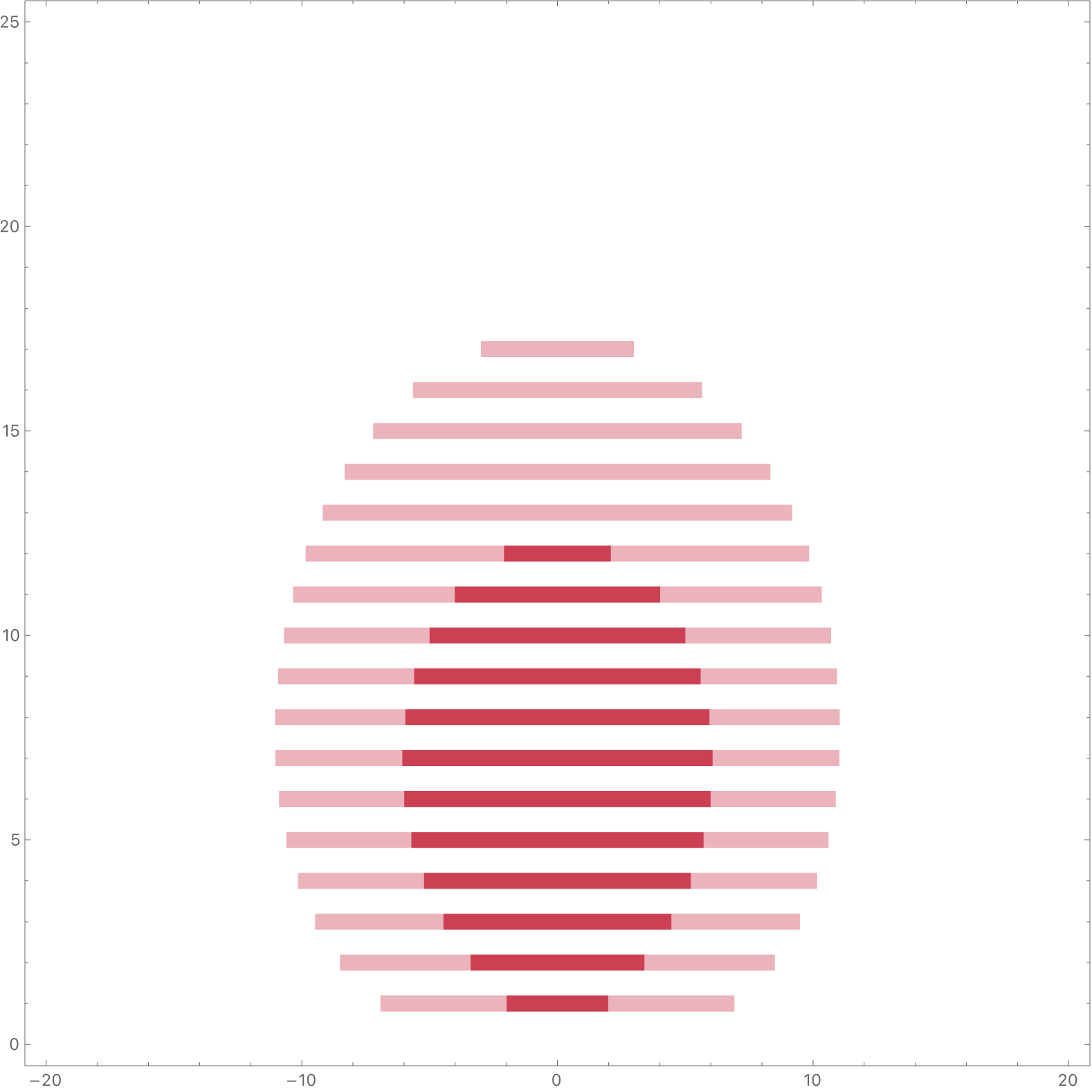}
    \caption{$S'\cap\{l>0\}$ (dark red) inside $S\cap\{l>0\}$ (red) for $\nu=0.001$ and $\beta=0.5$ (left) resp.\ $\beta=0.1$ (right).}\label{fig:area_plots}
    \end{center}
    \end{figure} 
This proves part \eqref{it:linear_unstable} of Theorem \ref{thm:linear}.

\subsubsection{Stable regime $B_\beta>0$: $\beta<0$ or $\beta>1$}\label{ssec:lin_stable}
In this regime, the coupling (due to rotation) of $Q$ and $\Omega$ is of oscillatory nature. To track the corresponding dynamics, it is convenient to rescale the variable $\Omega^2$ to match the regularity of $Q$:
\begin{lemma}
    Let $\beta<0$ or $\beta>1$. The system \eqref{eq:linearized} can equivalently be expressed via the variables
    \begin{equation}\label{eq:unknows}
        Q:=\Delta_L U^2,\qquad W:=\sqrt{\frac{\beta}{\beta-1}}|\nabla_L|\Omega^2,
    \end{equation}
    in the moving frame as
\begin{equation}\label{eq:linearsystem-QK}
    \begin{cases}
        \de_t Q+\alpha \de_z|\nabla_L|^{-1} W=\nu\Delta_LQ,\\
        \de_tW -\de_x\de_y^L|\nabla_L|^{-2} W+\alpha \de_z|\nabla_L|^{-1}Q=\nu\Delta_L W,
    \end{cases}
    \end{equation}
where $\alpha=\sqrt{\beta(\beta-1)}>0$,
together with the equations \eqref{eq:doublezero} for the double-zero modes.
\end{lemma}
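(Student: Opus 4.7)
The plan is direct substitution, which reduces the lemma to two Fourier-multiplier computations plus one subtle time-derivative identity. First, for the $Q$-equation, I would solve the definition of $W$ for $\Omega^2 = \sqrt{(\beta-1)/\beta}\, |\nabla_L|^{-1} W$ and plug into the first equation of \eqref{eq:linearsystem-QOmega}: the coupling term $\beta \de_z \Omega^2$ becomes $\beta \sqrt{(\beta-1)/\beta}\, \de_z |\nabla_L|^{-1} W = \alpha\, \de_z |\nabla_L|^{-1} W$, which matches the first equation of \eqref{eq:linearsystem-QK} (the square-root branch in the definition of $W$ being chosen consistently so that the sign agrees throughout the stable regime $B_\beta>0$, with the case $\beta<0$ absorbing a harmless sign into $W$).

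For the $W$-equation, the only nontrivial point is that $|\nabla_L|$ is time-dependent in the moving frame. In Fourier variables $(k,\eta,l)\in\ZZ\times\RR\times\ZZ$ one has $|\nabla_L|^2 = k^2 + (\eta-kt)^2 + l^2$, so
$$\de_t |\nabla_L|^2 = -2k(\eta-kt) = 2\,\widehat{\de_x \de_y^L},\qquad \de_t |\nabla_L| = \de_x \de_y^L\, |\nabla_L|^{-1},$$
interpreted as a Fourier multiplier identity. Applying the product rule to $W = \sqrt{\beta/(\beta-1)}\,|\nabla_L|\,\Omega^2$ then yields
$$\de_t W = \de_x \de_y^L\, |\nabla_L|^{-2} W + \sqrt{\tfrac{\beta}{\beta-1}}\,|\nabla_L|\, \de_t \Omega^2,$$
into which I would substitute the $\Omega^2$-equation from \eqref{eq:linearsystem-QOmega}.

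The remaining simplification uses the identity $|\nabla_L|\,\Delta_L^{-1} = -|\nabla_L|^{-1}$ and the fact that $|\nabla_L|$ commutes with $\Delta_L$ (both being Fourier multipliers depending only on $(k,\eta-kt,l)$ at fixed $t$). The coupling contribution then reduces to $-\sqrt{\beta/(\beta-1)}\,(\beta-1)\,\de_z |\nabla_L|^{-1} Q = -\alpha\,\de_z |\nabla_L|^{-1} Q$ and the viscous contribution to $\nu \Delta_L W$. Collecting terms and rearranging produces the second equation of \eqref{eq:linearsystem-QK}. The double-zero equations \eqref{eq:doublezero} have already been derived earlier (and reduce to the linear heat equation in the present linear setting), so no further work is needed. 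I expect no real obstacle, the only genuine subtlety being the correct accounting of the time-dependence of $|\nabla_L|$ in the product rule.
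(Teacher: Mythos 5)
Your proposal is correct and, since the paper states this lemma without writing out a proof, carries out precisely the direct substitution and Fourier-multiplier computation that the paper implicitly relies on; you correctly identify the one genuine subtlety, namely the time-dependence of $|\nabla_L|$ in the moving frame, which produces the extra term $\de_x\de_y^L|\nabla_L|^{-2}W$ via the product rule. Your remark about absorbing a harmless sign into $W$ when $\beta<0$ (where $\beta\sqrt{(\beta-1)/\beta}=-\alpha$ rather than $+\alpha$) is also exactly the right observation, and is indeed the reason the paper can state the system uniformly for $B_\beta>0$.
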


For the simple zero modes, the purely oscillatory nature of the coupling can be easily seen in \eqref{eq:linearsystem-QOmega_zer0} resp.\ \eqref{eq:linearsystem-QK}.
\begin{proof}[Proof of part (\ref{it:linear_stable}) Theorem \ref{thm:linear} -- simple zero modes]
With the change of coordinates \eqref{eq:unknows} the simple zero modes system \eqref{eq:linearsystem-QOmega_zer0} reads
\begin{equation}\label{eq:linearsystem-QK-simplezero}
    \begin{cases}
        \de_t \widetilde{Q}_{0}+\alpha \de_z|\nabla|^{-1} \widetilde{W}_{0}=\nu\Delta \widetilde{Q}_{0},\\
        \de_t\widetilde{W}_{0} +\alpha \de_z|\nabla|^{-1} \widetilde{Q}_{0}=\nu\Delta \widetilde{W}_{0}.
    \end{cases}
\end{equation}
which is diagonalized by the unknowns
\begin{equation}\label{eq:Gpm}
G_\pm:=\widetilde{Q}_0\pm i\de_z\abs{\de_z}^{-1}\widetilde{W}_0
\end{equation}
satisfying 
\begin{equation}\label{eq:Gpm-Lpm}
    \de_t G_\pm =\cL_\pm G_\pm, \qquad 
    \cL_\pm:= \nu\Delta\pm i \alpha |\de_z||\nabla|^{-1}.
\end{equation}
The operator $\cL_\pm$ encodes the heat dissipation from $\nu\Delta$, and the oscillatory behavior, $i\alpha|\de_z||\nabla|^{-1}$, arising from the structure of the system \eqref{eq:linearsystem-QK-simplezero}. Remarkably, this is the same operator appearing in the simple zero modes analysis of the $3d$ Boussinesq system when perturbed around a stably stratified Couette flow, see \cites{CZDZ23,CZDZW24}. It follows that
\begin{equation}
    \norm{G_\pm(t)}^2_{L^2}=\norm{\e^{\cL_\pm t}G_\pm(0)}^2_{L^2}=\sum_{l\neq0}\int_{\RR} \left|\e^{-\nu(\eta^2+l^2)\pm i\alpha |l||\eta,l|^{-1}}\widehat G_\pm(0,\eta,l)\right|^2\dd \eta \leq \e^{-2\nu t}\norm{G_\pm(0)}_{L^2}^2,
\end{equation}
and hence can deduce the suppression of the lift up effect
\begin{equation}
    \norm{\widetilde u^j_0(t)}_{L^2}\leq 2\norm{ G_\pm(t)}_{L^2}\leq 2\e^{-\nu t}\norm{G_\pm(0)}_{L^2}\leq  4\e^{-\nu t}\norm{\widetilde{u}_0(0)}_{H^2}, \qquad j\in\{1,2,3\}.
\end{equation}
In addition, as in \cite{CZDZW24}*{Corollary 3.2}, we have the linear dispersive estimates for $\cL_\pm$
\begin{equation}\label{eq:lin_disp_est}
    \sum_{\substack{0\leq a_j\leq 2\\1\leq j\leq 4}}\|(\de_z\abs{\de_z}^{-1})^{a_1}(\de_y\abs{\nabla}^{-1})^{a_2}\abs{\de_z}^{-a_3}\abs{\nabla}^{-a_4}\e^{t\cL_\pm}\widetilde{\varphi}_0\|_{L^\infty(\RR\times\TT)}\leq C (\alpha t)^{-\frac13}\e^{-\nu t}\|\widetilde{\varphi}_0\|_{W^{4,1}
    (\RR\times\TT)}.
\end{equation}

Since by \eqref{eq:u_recover} and \eqref{eq:unknows} we have that
\begin{equation}
 \widetilde{U}^1_0=-\sqrt{\frac{\beta-1}{\beta}}\de_z\abs{\de_z}^{-2}\abs{\nabla}^{-1}\widetilde{W}_0,\qquad \widetilde{U}^2_0=-\abs{\nabla}^{-2}\widetilde{Q}_0,\qquad \widetilde{U}^3_0=-\de_z\abs{\de_z}^{-2}\de_y\abs{\nabla}^{-2}\widetilde{Q}_0,   
\end{equation}
this implies the claimed decay for all components $\widetilde{u}^j_0=\widetilde{U}^j_0$, $1\leq j\leq 3$, of the velocity field.

 \end{proof}

\subsection{Dynamics of the nonzero modes}\label{ssec:nonzero_linear}
In contrast to the simple zero modes, the dynamics of non-zero modes are dominated by shearing due to the Couette background. As observed in \cite{BGM17}, to capture the associated effects of inviscid damping and enhanced dissipation it is convenient to use a suitably constructed Fourier multiplier $\m$ with symbol $m=m(t,k,\eta,l)$, 
defined as $m(0,k,\eta,l)=1$ and
\begin{equation}\label{def:mult-m}
    \frac{\dot m}{m}=\begin{cases}
        \frac{k(\eta-kt)}{|k,\eta-kt,l|^2}, \qquad t\in [\frac{\eta}{k},\frac{\eta}{k}+1000\nu^{-\frac13}],\\
        0, \hspace*{2.065cm} t\notin [\frac{\eta}{k},\frac{\eta}{k}+1000\nu^{-\frac13}].
    \end{cases}
\end{equation}
The central role of $\m$ is to deal with the slowly decaying term $\de_{x}\de_y^L\abs{\nabla_L}^{-2}W$ in \eqref{eq:linearsystem-QK}, balancing the growth that solutions experience in the transition between the inviscid damping regime $t< \frac{\eta}{k}$, and the dissipative regime $t\gtrsim\frac{\eta}{k}+\nu^{-\frac13}$. We recall from \cite{BGM17}*{Section 2.3} the main properties.
\begin{lemma}[Properties of $\fm$ ]\label{lemma:prop-m}
For $t\geq 0$ and $(k,\eta,l)\in(\TT\times\RR\times\TT)\setminus\{0\}$ we have:
    \begin{enumerate}
        \item (Uniform bounds) There holds that
        \begin{equation}\label{eq:lowerbound-m}
            \nu^{\frac13}+\frac{|k,l|}{|k,\eta-kt,l|}\lesssim m \lesssim 1.
        \end{equation}
        \item (Product estimate) For any $(\eta',l')\in\RR\times\TT$ there holds that
        \begin{equation}\label{eq:commutator-m-zeromodes}
            m(t,k,\eta,l)\lesssim m(t,k,\eta',l') \; \l\eta-\eta',l-l'\r.
        \end{equation}
    \end{enumerate}
\end{lemma}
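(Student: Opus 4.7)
The plan is to exploit the explicit integrability of the ODE \eqref{def:mult-m} defining $\fm$. For $k=0$, $m\equiv 1$ and both assertions are trivial, so I restrict to $k\neq 0$. Outside the activation window $I_{k,\eta}:=[\eta/k,\eta/k+1000\nu^{-1/3}]$ the logarithmic derivative $\dot m/m$ vanishes, while on $I_{k,\eta}$ it admits the antiderivative $-\tfrac12\log(k^2+(\eta-kt)^2+l^2)$. Combined with the initial condition $m(0)=1$, this will yield closed-form expressions for $m(t,k,\eta,l)$ in each of the three phases (pre-active, active, post-active), which are the workhorse of both parts.

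For the upper bound in (1), I would write $s=t-\eta/k\in[0,1000\nu^{-1/3}]$ on $I_{k,\eta}$ to obtain $k(\eta-kt)=-k^2 s\leq 0$, hence $m$ is non-increasing on $I_{k,\eta}$; together with $m(0)=1$ this yields $m\leq 1$ throughout. For the lower bound, the explicit formula in the active and post-active phases reads
\begin{equation}
    m(t,k,\eta,l)=\sqrt{\frac{k^2+(\eta-k\tau_0)^2+l^2}{k^2+(\eta-k\tau)^2+l^2}},
\end{equation}
with $\tau_0:=\max(\eta/k,0)$ and $\tau:=\min(t,\eta/k+1000\nu^{-1/3})$. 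The worst case is post-active, where the frozen value is bounded below by $|k,l|/\sqrt{k^2(1+10^6\nu^{-2/3})+l^2}\gtrsim \nu^{1/3}$ using $|k|\geq 1$. The ratio bound $m\gtrsim |k,l|/|k,\eta-kt,l|$ is then verified phase by phase, using in particular that in the post-active regime $|\eta-kt|\geq |\eta-k\tau|$.

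For part (2), the cleanest route proceeds via the identity $1/m^2(t,k,\eta,l)=1+X^2$ with $X:=(\eta-k\tau)/|k,l|$ (and the convention $X=0$ in the pre-active regime), and analogously $X'$ for $(k,\eta',l')$. The plan is to combine the triangle inequality with the elementary bound $|k,l|/|k,l'|\lesssim \langle l-l'\rangle$ (valid since $|k,l'|\geq 1$) to obtain, in the case when both frequencies lie in their active phases,
\begin{equation}
    |X'|\lesssim \langle l-l'\rangle\,|X|+\langle \eta-\eta'\rangle.
\end{equation}
Squaring, adding $1$, and dividing by $1+X^2\geq 1$ will give $m^2/m'^2\lesssim \langle \eta-\eta',l-l'\rangle^2$, and upon taking square roots this yields \eqref{eq:commutator-m-zeromodes}. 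The main obstacle lies in the case analysis required when $(\eta,l)$ and $(\eta',l')$ sit in different activation phases at time $t$: the case where $(\eta',l')$ is pre-active gives $m'=1$ and the conclusion is automatic from $m\leq 1$, while the remaining mixed-phase cases are handled by replacing $\tau$ or $\tau'$ by the appropriate endpoint of the activation window. This substitution introduces at most a shift of $1000\nu^{-1/3}$, which is absorbed into $\langle \eta-\eta',l-l'\rangle$ via the uniform lower bound $m\gtrsim \nu^{1/3}$ established in part (1).
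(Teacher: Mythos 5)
The paper does not prove this lemma---it cites \cite{BGM17}*{Section 2.3}---so your task amounts to supplying a proof from scratch, and your strategy (integrate the ODE in closed form, then compare the resulting expressions) is the natural and correct one. Part (1) is handled correctly: monotonicity on the active window plus $m(0)=1$ gives $m\leq 1$, and your phase-by-phase check of the lower bound is sound.

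Part (2), however, contains a genuine gap. The identity $1/m^2=1+X^2$ with $X=(\eta-k\tau)/|k,l|$ is valid \emph{only when} $\eta/k\geq 0$, so that the activation window $[\eta/k,\eta/k+1000\nu^{-1/3}]$ begins at a nonnegative time and the antiderivative is anchored at $|k,\eta-k\tau_0,l|^2=|k,l|^2$ with $\tau_0=\eta/k$. When $\eta/k<0$ the ODE is integrated from $t=0$ (this is the only place the initial condition $m(0)=1$ can be imposed), and the explicit formula becomes
\begin{equation}
    \frac{1}{m^2}=\frac{k^2+(\eta-k\tau)^2+l^2}{k^2+\eta^2+l^2}=\frac{1+X^2}{1+X_0^2},\qquad X_0:=\frac{\eta}{|k,l|}\neq 0,
\end{equation}
which can be \emph{much smaller} than $1+X^2$ (e.g.\ $k=1$, $l=0$, $\eta=-10^3$, $t=0$ gives $1/m^2=1$ but $1+X^2\approx 10^6$). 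Naively carrying the extra factor $(1+X_0^2)/(1+X_0'^2)$ through your squared-and-divided argument yields only $m\lesssim m'\langle\eta-\eta',l-l'\rangle^2$, i.e.\ one power too many. This is not a matter of a mere case distinction left to the reader: the quantity you propagate has to be changed. One workable fix is to set $\tau_0:=\max(\eta/k,0)$ and work with $Y:=k(\tau-\tau_0)/|k,\eta-k\tau_0,l|\geq 0$; completing the square shows $1+Y^2\leq 1/m^2\leq 2(1+Y^2)$ uniformly in all phases, and then the comparison $|Y'|\lesssim (1+|Y|)\langle\eta-\eta',l-l'\rangle$ does go through case by case (splitting on the signs of $\eta/k$ and $\eta'/k$, and using $|\eta|\leq|\eta-\eta'|$ whenever the two have opposite signs), recovering the stated exponent $1$. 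Your treatment of the mixed pre-/post-activation phases by shifting $\tau$ or $\tau'$ by $1000\nu^{-1/3}$ and invoking $m\gtrsim\nu^{1/3}$ is fine in spirit, but it also needs to be routed through this corrected variable.
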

The estimates \eqref{eq:lowerbound-m} play a key role in the quantitative analysis of stability of non-zero modes in \eqref{eq:linearsystem-QK}, which we present next. The bound \eqref{eq:commutator-m-zeromodes} will be useful for the related nonlinear analysis -- see Lemma \ref{lem:m-product} and Section \ref{sec:non-zero}.

\begin{proof}[Proof of part (\ref{it:linear_stable}) Theorem \ref{thm:linear} -- non-zero modes]
Rewriting the nonzero modes of the system \eqref{eq:linearsystem-QK} in Fourier variables $(k,\eta,l)\in\ZZ\setminus\{0\}\times\RR\times\ZZ$ we obtain 
\begin{align}
        \de_t \widehat Q_{\neq}+i\alpha \frac{l}{|k,\eta-kt,l|}\widehat W_{\neq}&=-\nu|k,\eta-kt,l|^2\widehat Q_{\neq},\\
        \de_t\widehat W_{\neq} + \frac{k(\eta-kt)}{|k,\eta-kt,l|^{2}}\widehat W_{\neq}+i\alpha \frac{l}{|k,\eta-kt,l|}\widehat Q_{\neq}&=-\nu|k,\eta-kt,l|^2 \widehat W_{\neq}.
\end{align}
The presence of the extra term $\frac{k(\eta-kt)}{|k,\eta-kt,l|^{2}}\widehat W_{\neq}$ motivates the definition of the multiplier $m$, which in turn is included in the following point-wise energy estimate in both variables to maintain the cancellation of skew-symmetric terms. 
We have
\begin{equation}\label{eq:nonzero_linear_energy}
\begin{aligned}
    &\frac12\ddt\left(\abs{m\widehat Q_{\neq}}^2+\abs{m\widehat W_{\neq}}^2\right) + \abs{\sqrt{-\frac{\dot m}{m}}m\widehat Q_{\neq}}^2+\abs{\sqrt{-\frac{\dot m}{m}}m\widehat W_{\neq}}^2 +\nu|k,\eta-kt,l|^2\left(\abs{m\widehat Q_{\neq}}^2+\abs{m\widehat{W}_{\neq}}^2\right)\\
    &\quad= -\frac{k(\eta-kt)}{|k,\eta-kt,l|^2}\abs{m\widehat W_{\neq}}^2\chi_{(0,\frac{\eta}{k}]}(t) - \frac{\dot m}{m}\abs{m\widehat W_{\neq}}^2\chi_{[\frac{\eta}{k},\frac{\eta}{k}+1000\nu^{-\frac13}]}(t) \\
    &\qquad -\frac{k(\eta-kt)}{|k,\eta-kt,l|^2}\abs{m\widehat W_{\neq}}^2\chi_{[\frac{\eta}{k}+1000\nu^{-\frac13},\infty)}(t)\\
    &\quad \leq \abs{\sqrt{-\frac{\dot m}{m}}m\widehat W_{\neq}}^2+ \frac{\nu}{2}|k,\eta-kt,l|^2\abs{m\widehat W_{\neq}}^2.
\end{aligned}
\end{equation}
The final bound follows from noticing that for $k\neq 0$ and $0<t\leq \frac{\eta}{k}$, the term $\frac{k(\eta-kt)}{|k,\eta-kt,l|^2}$ is non-negative, and that for $t\geq\frac{\eta}{k}+1000\nu^{-\frac13}$ it holds 
\begin{equation}
    \frac{|k(\eta-kt)|}{|k,\eta-kt,l|^2}\leq \frac{\nu}{2}|k,\eta-kt,l|^2.
\end{equation}
We deduce 
\begin{equation}
    \ddt\left(\abs{m\widehat Q_{\neq}}^2+\abs{m\widehat W_{\neq}}^2\right) +\nu|k,\eta-kt,l|^2\left(\abs{m\widehat Q_{\neq}}^2+\abs{m\widehat{W}_{\neq}}^2\right)\leq 0,
\end{equation}
and hence integrating in time and using 
\begin{equation}
    \int_0^t|k,\eta-ks,l|^2 \dd s\geq \frac{1}{12}k^2t^3,
\end{equation}
leads to
\begin{equation}
    \abs{m(t)\widehat Q_{\neq}(t)}^2+\abs{m(t)\widehat W_{\neq}(t)}^2 \leq \e^{-\frac{1}{12}\nu t^3}\left(\abs{\widehat Q_{\neq}(0)}^2+\abs{\widehat W_{\neq}(0)}^2\right),
\end{equation}
which implies the desired $L^2$-based norm bound.
To conclude, we translate to the original velocity variable $U$ the above bounds. From \eqref{eq:u_recover} 
it follows
\begin{equation}
    \abs{\widehat U^2_{\neq}}=\abs{|k,\eta-kt,l|^{-2}|\widehat Q_{\neq}}\leq \abs{|k,\eta-kt,l|^{-1}m\widehat Q_{\neq}}
\end{equation}
and hence
\begin{align}
    \norm{\widehat{u}^2(k,\cdot,l,t)}_{L^2}^2&=\norm{\widehat{U}^2(k,\cdot,l,t)}_{L^2}^2=\int_\RR |k,\eta-kt,l|^{-2}\abs{m\widehat Q(k,\eta,l,t)}^2\dd \eta\notag\\
    &\leq \e^{-\frac{1}{12}\nu t^3}\int_\RR |k,\eta-kt,l|^{-2}\abs{\widehat Q(k,\eta,l,0)}\dd \eta\notag\\
    &\leq \l t\r^{-2}\e^{-\frac{1}{12}\nu t^3}\int_\RR |k,\eta,l|^{6}\abs{\widehat U^2(k,\eta,l,0)}\dd \eta,
\end{align}
which proves the $u^2_{\neq}$ estimate in Theorem \ref{thm:linear} upon summation over $k,l$, where $k\neq 0$, and using Plancherel.

Analogously for $U^1, U^3$ we appeal to \eqref{eq:u_recover} and obtain the point-wise estimates
\begin{align}
    \abs{\widehat U^1_{\neq}}&\leq \sqrt{\frac{\beta-1}{\beta}}\abs{l|k,l|^{-2}|k,\eta-kt,l|^{-1}\widehat W_{\neq}}+\abs{k(\eta-kt)|k,\eta-kt,l|^{-2}|k,l|^{-2}\widehat Q_{\neq}}\notag\\
    &\leq \sqrt{\frac{\beta-1}{\beta}}\abs{m\widehat W_{\neq}}+\abs{m\widehat Q_{\neq}}\\
    &\leq \left(\sqrt{\frac{\beta-1}{\beta}} +1\right)\e^{-\frac{1}{12}\nu t^3} \left(\abs{\widehat W_{\neq}(0)}+\abs{\widehat Q_{\neq}(0)}\right),
\end{align}
and 
\begin{align}
    \abs{\widehat U^3_{\neq}}&\leq \sqrt{\frac{\beta-1}{\beta}}\abs{k|k,l|^{-2}|k,\eta-kt,l|^{-1}\widehat W_{\neq}}+\abs{l(\eta-kt)|k,\eta-kt,l|^{-2}|k,l|^{-2}\widehat Q_{\neq}}\notag\\
    &\leq \sqrt{\frac{\beta-1}{\beta}}\abs{m\widehat W_{\neq}}+\abs{m\widehat Q_{\neq}}\\
    &\leq \left(\sqrt{\frac{\beta-1}{\beta}} +1\right)\e^{-\frac{1}{12}\nu t^3} \left(\abs{\widehat W_{\neq}(0)}+\abs{\widehat Q_{\neq}(0)}\right).
\end{align}
This leads to
\begin{align}
    \norm{u^1_{\neq}(t)}^2_{L^2}+ \norm{u^3_{\neq}(t)}^2_{L^2}
    &\lesssim \left(1+\frac{\beta-1}{\beta}\right)\left( \norm{\fm W_{\neq}(t)}_{L^2}^2+\norm{\fm Q_{\neq}(t)}_{L^2}^2\right)\\
    &\lesssim  \left(1+\frac{\beta-1}{\beta}+\frac{\beta}{\beta-1}\right) \e^{-\frac{1}{12}\nu t^3}\norm{u_{\neq}(0)}_{H^2}^2 ,
\end{align}
and concludes the proof.
\end{proof}

\section{Nonlinear analysis}\label{sec:nonlinear}
This section is devoted to the proof of Theorem \ref{thm:bootstrap}. After some technical preliminaries in Section \ref{sec:prelim} and a short recap of the basic dispersive features of our system in Section \ref{sec:disp}, we establish the energy estimates for the double zero modes in Section \ref{sec:double_zero}, for the simple zero modes in Section \ref{sec:simple_zero} and for the non-zero modes in Section \ref{sec:non-zero}. Throughout this section we will make generous use of the bootstrap assumptions of Theorem \ref{thm:bootstrap} when relevant, without explicitly referring to them every time.

To begin, we recall from \eqref{eq:perturb_sys_nl} the nonlinear evolution equations for our unknowns $W,Q$ in \eqref{eq:unknows} is
\begin{equation}\label{eq:nonlin}
    \begin{cases}
        \de_t Q +\alpha \de_z|\nabla_L|^{-1}W -\nu \Delta_L Q = -\de_y^L\Delta_L\mathcal{P}(U,U)-\Delta_L\mathcal{T}(U,U^2),\\
        \de_t W-\de_{x}\de_y^L\abs{\nabla_L}^{-2}W +\alpha \de_z|\nabla_L|^{-1}Q-\nu\Delta_L W=\sqrt{\frac{\beta}{\beta-1}}\abs{\nabla_L}\left(\de_z\mathcal{T}(U,U^1)-\de_x\mathcal{T}(U,U^3)\right),
    \end{cases}
\end{equation}
where the transport and pressure nonlinearities are given by
\begin{equation}
 \mathcal{T}(U,U^j)=U\cdot\nabla_LU^j,\quad 1\leq j\leq 3,\qquad \mathcal{P}(U,U)=-\Delta_L^{-1}\sum_{1\leq i,j\leq 3}\partial_i^LU^j\partial_j^LU^i.   
\end{equation} 
The system \eqref{eq:nonlin} is to be complemented with the double-zero mode equations \eqref{eq:doublezero_eq} for $\overline{U}^j_0$, $j=1,3$ (recall that $\overline{U}^2_0=0$ by incompressibility and integrability) -- the simple zero and non-zero modes can be recovered from $Q,W$ as (see \eqref{eq:u_recover} and \eqref{eq:unknows})
\begin{equation}\label{eq:U-QW}
\begin{aligned}
 U^1-\overline{U}^1_0&=-\abs{\nabla_{x,z}}^{-2}\left(\sqrt{\frac{\beta-1}{\beta}}\de_z\abs{\nabla_L}^{-1}W+\de_x\de_y^L\abs{\nabla_L}^{-2}Q\right),\\
 U^2&=-\abs{\nabla_L}^{-2}Q,\\
 U^3-\overline{U}^3_0&=-\abs{\nabla_{x,z}}^{-2}\left(\sqrt{\frac{\beta-1}{\beta}}\de_x\abs{\nabla_L}^{-1}W-\de_z\de_y^L\abs{\nabla_L}^{-2}Q\right).
\end{aligned} 
\end{equation}
As is apparent e.g.\ from \eqref{eq:U-QW} and stated in Theorem \ref{thm:transition threshold}, many of our bounds for nonlinear terms degenerate as $B_\beta\searrow 0$. Since we are not interested in this regime but instead in the effect of large $\abs{\beta}$, in the remainder of this section we will track dependence of constants on $\beta$ only to the extent that they are relevant for large Bradshaw-Richardson numbers, i.e.\ we will ignore constants that are uniformly bounded as $\abs{\beta}\to\infty$.

\subsection{Preliminaries}\label{sec:prelim}

To control the evolution of the nonzero modes under the nonlinear system \eqref{eq:nonlin}, we introduce the Fourier multiplier
\begin{equation}\label{eq:defA}
    \cA:=\fm\cM \e^{\delta \nu^{\frac13}t},
\end{equation}
which incorporates the multiplier $\fm$ already used in the linear analysis (see Section \ref{sec:linear}, \eqref{def:mult-m} and Lemma \ref{lemma:prop-m}) and additional ghost weights $\cM:=\cM_1\cM_2$ as well as the time weight $\e^{\delta\nu^{\frac13}t}$, where
\begin{itemize}
    \item $\cM_1$ is defined via its Fourier symbol $M_1(t,k,\eta,l)$ as
    \begin{equation}
    -\frac{\dot M_1(t,k,\eta,l)}{M_1(t,k,\eta,l)}:=\frac{\nu^{\frac13}k^2}{k^2+\nu^{\frac23}(\eta-kt)^2}, \quad M_1(0,k,\eta,l)=1,
    \end{equation}
    and satisfies moreover that $0<c\leq M_1\leq 1$ for some $c>0$. This multiplier has already been used in \cites{BGM17,L18,HSX24sept,CZDZW24} and captures the enhanced dissipation effect due to the background Couette flow through the following key property:
\begin{lemma}
    There exists $c>0$ such that
    \begin{equation}
        0<c\leq M_1\leq 1.
    \end{equation}
    Moreover, there holds that
    \begin{equation}\label{def:multi-M1}
        \nu^{1/3}\norm{F_{\neq}}^2_{L^2_tH^N}\lesssim \norm{\sqrt{-\frac{\dot \cM_1}{\cM_1}}F_{\neq}}^2_{L^2_tH^N}+\nu\norm{\nabla_L F_{\neq}}^2_{L^2_tH^N}.
    \end{equation}
\end{lemma}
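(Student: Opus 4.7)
The plan is to argue entirely at the level of the Fourier symbol $M_1(t,k,\eta,l)$, separating the two assertions and handling each by direct computation.

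First I would establish the pointwise upper bound. Since by definition $-\dot M_1/M_1 \ge 0$, the map $t\mapsto \log M_1$ is non-increasing, so $M_1(t,k,\eta,l)\le M_1(0,k,\eta,l)=1$ for all $t\ge 0$. For the lower bound, I would distinguish the cases $k=0$ and $k\ne 0$. When $k=0$ the defining ODE gives $\dot M_1\equiv 0$, so $M_1\equiv 1$. When $k\ne 0$, writing $M_1(t)=\exp\!\Bigl(-\int_0^t \tfrac{\nu^{1/3} k^2}{k^2+\nu^{2/3}(\eta-k\tau)^2}\,\dd\tau\Bigr)$ and performing the change of variables $s=\nu^{1/3}(\eta-k\tau)/|k|$ reduces the integral in the exponent to $\int \frac{\dd s}{1+s^2} \le \pi$, uniformly in $k\ne 0$, $\eta$, $l$, and $\nu$. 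Hence $M_1(t,k,\eta,l)\ge \e^{-\pi}=:c>0$, proving the two-sided bound.

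For the enhanced-dissipation inequality, I would argue pointwise on the Fourier side for each nonzero mode $k\in\ZZ\setminus\{0\}$ and each $(\eta,l)\in\RR\times\ZZ$, then integrate. I split into two regimes:
\begin{itemize}
\item \emph{Resonant regime} $\nu^{2/3}(\eta-kt)^2\le k^2$: then $k^2+\nu^{2/3}(\eta-kt)^2 \le 2k^2$, so
\begin{equation}
-\frac{\dot M_1}{M_1}(t,k,\eta,l) \;\ge\; \frac{\nu^{1/3}}{2},
\end{equation}
which yields $\nu^{1/3}|\widehat{F}_{\ne}(t,k,\eta,l)|^2 \le 2\bigl(-\tfrac{\dot M_1}{M_1}\bigr)|\widehat{F}_{\ne}|^2$.
\item \emph{Dissipative regime} $\nu^{2/3}(\eta-kt)^2> k^2$: using $|k|\ge 1$ for nonzero modes, we get $(\eta-kt)^2 > \nu^{-2/3}$, and hence $\nu|k,\eta-kt,l|^2 \ge \nu(\eta-kt)^2 \ge \nu^{1/3}$, so the viscous term on the right-hand side absorbs $\nu^{1/3}|\widehat{F}_{\ne}|^2$.
\end{itemize}
In either regime we obtain, pointwise in $(t,k,\eta,l)$ with $k\ne 0$, the frequency-localized inequality
\begin{equation}
\nu^{1/3}|\widehat{F}_{\ne}|^2 \;\lesssim\; \Bigl(-\frac{\dot M_1}{M_1}\Bigr)|\widehat{F}_{\ne}|^2 + \nu|k,\eta-kt,l|^2|\widehat{F}_{\ne}|^2.
\end{equation}

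Finally, I would multiply this pointwise bound by the Sobolev weight $\l k,\eta,l\r^{2N}$, sum in $(k,l)\in(\ZZ\setminus\{0\})\times\ZZ$, integrate in $\eta\in\RR$ and $t\in[0,T]$, and invoke Plancherel together with the already-established two-sided bound $c\le M_1\le 1$ to replace $|\widehat F_{\ne}|$ by $|M_1\widehat F_{\ne}|$ or vice versa up to the constant $c$. This yields the claimed $L^2_t H^N$ estimate. No substantive obstacle is expected: the only point requiring care is the use of $|k|\ge 1$ in the dissipative regime, which is the quantitative reason the inequality is specific to the nonzero modes $F_{\ne}$.
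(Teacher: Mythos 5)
The paper does not actually reproduce a proof of this lemma; it simply cites [BGM17, Lemma 2.1] and [CZDZW24, Lemma 2.2]. Your argument is correct and is exactly the standard one used in those references: the upper bound from monotonicity, the uniform lower bound $M_1\ge e^{-\pi}$ via the substitution $s=\nu^{1/3}(\eta-k\tau)/|k|$ reducing the exponent to $\int\frac{\dd s}{1+s^2}\le\pi$, and the regime split $\nu^{2/3}(\eta-kt)^2\lessgtr k^2$ (with the crucial use of $|k|\ge 1$ in the dissipative regime) to obtain the pointwise inequality $\nu^{1/3}\lesssim -\dot M_1/M_1+\nu|k,\eta-kt,l|^2$, which then gets multiplied by the Sobolev weight and integrated. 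No gaps; this matches the source.
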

The proof of this lemma can be found in \cite{BGM17}*{Lemma 2.1} (or see \cite{CZDZW24}*{Lemma 2.2}).

\item $\cM_2$ is designed to capture additional time decay for some terms involving $U^2_{\neq}$ (in a similar spirit as in our nonlinear analysis in \cite{CZDZW24}), and is defined via the Fourier multiplier
\begin{equation}\label{def:multi-M2}
    -\frac{\dot M_2(t,k,\eta,l)}{M_2(t,k,\eta,l)}:=\frac{k^2}{k^2+(\eta-kt)^2+l^2}, \qquad M_2(0,k,\eta,l)=0.
\end{equation}
It also satisfies $0<c\leq M_2\leq 1$, for some $c>0$ (see e.g.\ \cite{CZDZW24}*{Lemma 2.1}), and is used through the combination of the energy estimates \eqref{eq:btstrap_Q_neq} and \eqref{eq:extratimedecay}.

\item the time weight $\e^{\delta\nu^{\frac13}t}$ provides a simple way of tracking the enhanced dissipation rate, where $\delta>0$ will be chosen sufficiently small (see the proof of Proposition \ref{prop:estim_nonzero}).
\end{itemize}

We record that the main multiplier $\cA$ satisfies
\begin{equation}\label{eq:A-Sobbd}
\norm{\cA}_{H^s\to H^s}\lesssim \e^{\delta \nu^{\frac13}t},\qquad s\geq 0,
\end{equation}
as can be seen directly from \eqref{eq:lowerbound-m}, and we have the following bounds:
\begin{lemma}\label{lem:more_enh_dissip_neq}
 There exists $C>0$ such that under the bootstrap assumptions of Theorem \ref{thm:bootstrap} there holds that
 \begin{equation}\label{eq:more_enh_dissip_neq}
  \sum_{F\in\{Q,W\}}\nu^{1/3}\norm{\cA F_{\neq}}^2_{L^2_tH^N}\leq C\eps^2.
 \end{equation}
\end{lemma}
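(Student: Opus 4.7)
My plan is to apply the enhanced dissipation inequality \eqref{def:multi-M1} directly to the already-weighted functions $\cA F_{\neq}$ and then use the multiplicative structure $\cM=\cM_1\cM_2$ together with the bootstrap bounds \eqref{eq:btstrap_Q_neq}, \eqref{eq:btstrap_W_neq} to close. Since $\cA=\fm\cM_1\cM_2\e^{\delta\nu^{1/3}t}$ is a (time-dependent) Fourier multiplier, $\cA F_{\neq}$ is still a nonzero-mode function, so \eqref{def:multi-M1} applies and gives
$$\nu^{1/3}\norm{\cA F_{\neq}}^2_{L^2_tH^N}\lesssim \norm{\sqrt{-\tfrac{\dot\cM_1}{\cM_1}}\,\cA F_{\neq}}^2_{L^2_tH^N}+\nu\norm{\nabla_L\cA F_{\neq}}^2_{L^2_tH^N}.$$

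Next, writing $-\dot\cM/\cM=-\dot\cM_1/\cM_1-\dot\cM_2/\cM_2$ with both $\cM_1,\cM_2$ nonincreasing in $t$ (i.e.\ $-\dot\cM_i/\cM_i\geq 0$ pointwise in Fourier), we obtain the pointwise comparison $-\dot\cM_1/\cM_1\leq -\dot\cM/\cM$. Consequently, the ghost-weight term on the right-hand side is controlled by $\norm{\sqrt{-\dot\cM/\cM}\,\cA F_{\neq}}^2_{L^2_tH^N}$, which is exactly the quantity appearing in \eqref{eq:btstrap_Q_neq}, \eqref{eq:btstrap_W_neq}. For the dissipation term, observe that at each fixed $t$ both $\nabla_L$ and $\cA$ act as Fourier multipliers and therefore commute, so $\nabla_L\cA F_{\neq}=\cA\nabla_L F_{\neq}$ and the term matches the dissipation piece of the bootstrap assumptions. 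Summing the resulting estimate over $F\in\{Q,W\}$ and invoking \eqref{eq:btstrap_Q_neq}, \eqref{eq:btstrap_W_neq} yields the claim with a universal constant $C$.

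I do not anticipate any genuine obstacle: the argument is purely structural, exploiting that \eqref{def:multi-M1} is multiplier-agnostic (it holds for any nonzero-mode input, in particular $\cA F_{\neq}$), that $\cM_1$ appears as a factor in $\cA$, and that the bootstrap already bundles the two quantities on the right-hand side. The only point worth emphasizing is that one does not need any refined product/commutator estimate for $\cA$ here; the construction $\cM=\cM_1\cM_2$ was precisely designed so that the enhanced-dissipation penalty furnished by $\cM_1$ is available ``for free'' inside the combined weight.
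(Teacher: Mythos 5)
Your argument is correct and is precisely the paper's intended one: apply \eqref{def:multi-M1} to the nonzero-mode function $\cA F_{\neq}$, observe that $-\dot\cM_1/\cM_1\leq -\dot\cM/\cM$ pointwise since both factors of $\cM=\cM_1\cM_2$ have nonnegative $-\dot\cM_i/\cM_i$, and close with the bootstrap bounds \eqref{eq:btstrap_Q_neq}, \eqref{eq:btstrap_W_neq}. The paper states this as a one-liner, and your write-up simply unpacks the same steps.
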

\begin{proof}
 This follows directly by combining \eqref{def:multi-M1} with the bootstrap assumptions \eqref{eq:btstrap_Q_neq} and \eqref{eq:btstrap_W_neq}.
\end{proof}
Moreover, the following (nonlinear) product estimate will be useful in Section \ref{sec:non-zero}.
\begin{lemma}\label{lem:m-product}
 For $F,G:\TT\times\RR\times\TT\to\RR$  and $s>\frac32$ there holds that
 \begin{equation}\label{eq:A-product-bd}
     \norm{\cA(FG_0)}_{H^s}\lesssim \norm{\cA F}_{H^s}\norm{\l\nabla\r G_0}_{H^s},
 \end{equation}
\end{lemma}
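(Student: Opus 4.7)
\textbf{Proof plan for Lemma \ref{lem:m-product}.}

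The strategy is to work on the Fourier side, distribute the multiplier $\cA = \fm \cM \e^{\delta\nu^{1/3} t}$ across the convolution by means of the commutator-type estimate \eqref{eq:commutator-m-zeromodes}, and then close with the standard Sobolev product estimate in three dimensions (valid for $s>3/2$). Since $G_0$ is supported in $x$-frequency $k=0$, the convolution
\[
    \widehat{FG_0}_{k,l}(\eta) = \sum_{l'\in\ZZ}\int_\RR \widehat{F}_{k,l-l'}(\eta-\eta')\,\widehat{G_0}_{0,l'}(\eta')\,\dd\eta'
\]
preserves the $x$-frequency $k$ on the $F$-factor, which is exactly the setting in which \eqref{eq:commutator-m-zeromodes} applies.

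The first step is to write $A(t,k,\eta,l) := m(t,k,\eta,l)\,\cM(t,k,\eta,l)\,\e^{\delta\nu^{1/3}t}$ and observe that since $\cM = \cM_1\cM_2$ is bounded above and below by positive constants, $\cM(t,k,\eta,l)\lesssim \cM(t,k,\eta-\eta',l-l')$. Combining with \eqref{eq:commutator-m-zeromodes} applied with the pair of frequencies $(\eta,l)$ and $(\eta-\eta',l-l')$ (same $k$), one obtains
\[
    A(t,k,\eta,l) \;\lesssim\; A(t,k,\eta-\eta',l-l')\,\jap{\eta',l'}.
\]
Inserting this into the convolution representation and taking absolute values yields the pointwise Fourier bound
\[
    \bigl|\widehat{\cA(FG_0)}_{k,l}(\eta)\bigr| \;\lesssim\; \sum_{l'\in\ZZ}\int_\RR \bigl|\widehat{\cA F}_{k,l-l'}(\eta-\eta')\bigr|\,\bigl|\widehat{\jap{\nabla}G_0}_{0,l'}(\eta')\bigr|\,\dd\eta'.
\]

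In the second step, introduce auxiliary real functions $H,K$ whose Fourier coefficients equal the moduli $|\widehat{\cA F}|$ and $|\widehat{\jap{\nabla}G_0}|$ respectively. These satisfy $\norm{H}_{H^s}=\norm{\cA F}_{H^s}$ and $\norm{K}_{H^s}=\norm{\jap{\nabla}G_0}_{H^s}$, and the right-hand side above is exactly $\widehat{HK}_{k,l}(\eta)$. Since $H^s$-norms are monotone in the modulus of the Fourier coefficients, we conclude
\[
    \norm{\cA(FG_0)}_{H^s} \;\lesssim\; \norm{HK}_{H^s}.
\]
Finally, the classical Sobolev product estimate $\norm{HK}_{H^s}\lesssim \norm{H}_{H^s}\norm{K}_{H^s}$ for $s>3/2$ on the three-dimensional domain $\TT\times\RR\times\TT$ delivers the claimed bound
\[
    \norm{\cA(FG_0)}_{H^s} \lesssim \norm{\cA F}_{H^s}\,\norm{\jap{\nabla}G_0}_{H^s}.
\]

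The only genuinely non-trivial ingredient is the commutator-type inequality for $m$ in \eqref{eq:commutator-m-zeromodes}, which is already recorded in Lemma \ref{lemma:prop-m}; all other multipliers ($\cM_1$, $\cM_2$, and the time weight) are comparable to constants and thus pass through the convolution for free. I expect no further obstruction, and the asymmetry in the right-hand side (only $\jap{\nabla}G_0$ on the zero-mode factor, while $\cA F$ enters fully) reflects exactly the fact that the $\cA$-weight on $G_0$ is trivial since $G_0$ has vanishing $x$-frequency.
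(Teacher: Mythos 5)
Your proof is correct and follows essentially the same approach as the paper: the paper also deduces the $\fm$-product estimate from \eqref{eq:commutator-m-zeromodes} together with the Sobolev algebra property, and then passes to $\cA$ by observing that the ghost multipliers $\cM_1, \cM_2$ are bounded above and below by positive constants. You have simply spelled out the Fourier-side convolution argument (and the auxiliary-function trick to justify the Sobolev product bound from a pointwise modulus inequality) that the paper summarizes in one line.
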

\begin{proof}
 From \eqref{eq:commutator-m-zeromodes} and the Sobolev algebra property of $H^s$, $s>\frac32$, we directly obtain that
 \begin{equation}\label{eq:m-product-bd}
     \norm{\fm(FG_0)}_{H^s}\lesssim \norm{\fm F}_{H^s}\norm{\l\nabla\r G_0}_{H^s}.
 \end{equation}
 The claim \eqref{eq:A-product-bd} then follows by invoking the ghost multiplier bounds $0<c\leq M_j\leq 1$, $j\in\{1,2\}$.
\end{proof}

For future reference we also collect some basic consequences of our choice of  good unknowns \eqref{eq:unknows}  and the bootstrap setup.
\begin{lemma}\label{lemma:additional_estim_U}
    For the velocity $U$ we have the following estimates, valid for any $s\geq 0$:
    \begin{itemize}
    \item Nonzero modes - absorption of $|\nabla_L|$:
    \begin{equation}\label{eq:absorption_nablaL}
        \norm{|\nabla_{x,z}||\nabla_L|U^{1,3}_{\neq}}_{H^s}\lesssim  \nu^{-\frac13}\norm{\mathfrak{m}(Q,W)_{\neq}}_{H^s},\qquad
        \norm{|\nabla_L|^2U^2_{\neq}}_{H^s}\lesssim  \nu^{-\frac13}\norm{\fm Q_{\neq}}_{H^s}.
    \end{equation}
    \item Nonzero modes - reconstruction of $\fm$:
    \begin{equation}\label{eq:reconstruction_m}
        \norm{|\nabla_{x,z}|^2U^{1,3}_{\neq}}_{H^s}\lesssim  \norm{\fm (Q,W)_{\neq}}_{H^s},\qquad \norm{|\nabla_{x,z}||\nabla_L|U^2_{\neq}}_{H^s}\lesssim  \norm{\fm Q_{\neq}}_{H^s}.
    \end{equation}
    \item Nonzero modes - extra time decay of $U^2_{\neq}$:
    \begin{align}\label{eq:extratimedecay}
        \norm{|\nabla_{x,z}|U^2_{\neq}}_{H^s}\lesssim  \norm{\sqrt{-\frac{\dot \cM_2}{\cM_2}}\fm Q_{\neq}}_{H^s}.
    \end{align}
    \item Simple zero modes - conversion to good unknowns:
    \begin{equation}\label{eq:simple0_conversion}
        \norm{|\de_z||\nabla|\widetilde{U}^1_0}_{H^s}\lesssim  \norm{\widetilde{W}_0}_{H^s},\quad
        \norm{|\nabla|^{2}\widetilde{U}^2_{0}}_{H^s}\lesssim   \norm{\widetilde{Q}_0}_{H^s},\quad
        \norm{|\de_z||\nabla|\widetilde{U}^3_0}_{H^s}\lesssim   \norm{\widetilde{Q}_0}_{H^s}.
    \end{equation}
    \end{itemize}
\end{lemma}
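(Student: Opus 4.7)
The proof proceeds by direct Fourier-analytic computation: for each inequality I would derive a pointwise bound on the corresponding Fourier symbol and then conclude via Plancherel. The only inputs required are the explicit representations \eqref{eq:U-QW} of $U^{1,2,3}$ in terms of $(Q,W)$, the lower bound $\fm \gtrsim \nu^{1/3} + |k,l|/|k,\eta-kt,l|$ from \eqref{eq:lowerbound-m}, and, for the extra time-decay bound \eqref{eq:extratimedecay}, the identity $\sqrt{-\dot M_2/M_2}=|k|/|k,\eta-kt,l|$ read off from \eqref{def:multi-M2}.

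For the nonzero-mode bounds, substituting \eqref{eq:U-QW} on the Fourier side reduces each estimate to controlling certain ratios of Bessel-type weights. For the absorption bounds \eqref{eq:absorption_nablaL}, the extra factor of $|\nabla_L|$ on the left-hand side cancels most of the denominators from \eqref{eq:U-QW}, leaving symbols bounded by an absolute constant; the estimate then closes using $1\lesssim \nu^{-1/3}\fm$. For the reconstruction bounds \eqref{eq:reconstruction_m}, after simplification the surviving symbols take the form $|l|/|k,\eta-kt,l|$ or $|k||\eta-kt|/|k,\eta-kt,l|^2$. The first is controlled by $|k,l|/|k,\eta-kt,l|\lesssim \fm$ directly. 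For the second I plan to extract the trivial factor $|\eta-kt|/|k,\eta-kt,l|\leq 1$ and then use $|k|\leq|k,l|$ to bound the remainder by $|k,l|/|k,\eta-kt,l|\lesssim \fm$ as well.

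For the extra time-decay bound \eqref{eq:extratimedecay} I factor the symbol as
\begin{equation}
\frac{|k,l|}{|k,\eta-kt,l|^2}=\frac{|k,l|}{|k,\eta-kt,l|}\cdot\frac{1}{|k,\eta-kt,l|},
\end{equation}
bounding the first factor by $\fm$ as above and the second by $|k|/|k,\eta-kt,l|=\sqrt{-\dot M_2/M_2}$, where the inequality $1\leq |k|$ crucially uses that we are restricted to nonzero modes, i.e.\ $k\neq 0$.

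Finally, for the simple-zero-mode inequalities \eqref{eq:simple0_conversion}, setting $k=0$ in \eqref{eq:U-QW} annihilates the $\de_x$-terms and collapses $|\nabla_L|$ to $|\nabla|$, so each inequality reduces to an elementary pointwise identity in $(\eta,l)$, using $|l|\geq 1$ (since simple zero modes have nonzero $z$-frequency) to invert the powers of $\de_z$ appearing in $|\nabla_{x,z}|^{-2}$. The only mild obstacle in the whole proof is the reconstruction step for $U^{1,3}_{\neq}$: a careless bound $|k||\eta-kt|/|k,\eta-kt,l|^2\leq 1$ would cost a factor of $\nu^{-1/3}$, so one must perform the splitting above to retain the full factor of $\fm$ produced by \eqref{eq:lowerbound-m}.
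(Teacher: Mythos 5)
Your proof is correct and takes exactly the route the paper indicates -- the paper's own proof consists of the single sentence ``The proof follows from \eqref{eq:u_recover} and Lemma \ref{lemma:prop-m}'' -- and you supply the straightforward Fourier-symbol verification that the paper leaves implicit. The one genuinely non-routine detail, the factorization $|k||\eta-kt|/|k,\eta-kt,l|^2 = \left(|\eta-kt|/|k,\eta-kt,l|\right)\left(|k|/|k,\eta-kt,l|\right)$ needed in \eqref{eq:reconstruction_m} to preserve the factor $\fm$ without incurring $\nu^{-1/3}$, is correctly identified and handled.
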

\begin{proof}
    The proof follows from \eqref{eq:u_recover} and Lemma \ref{lemma:prop-m}. 
\end{proof}

\subsection{Dispersive decay of simple zero modes}\label{sec:disp}
A key element in our analysis is the behavior of simple zero modes, which by \eqref{eq:nonlin} satisfy
\begin{equation}\label{eq:simplezero_QK}
    \begin{cases}
        \de_t \widetilde{Q}_0 +\alpha \de_z|\nabla|^{-1}\widetilde{W}_0 -\nu \Delta \widetilde{Q}_0 = -\de_y\Delta\widetilde{\mathcal{P}}_0(U,U)-\Delta\widetilde{\mathcal{T}}_0(U,U^2),\\
        \de_t \widetilde{W}_0 +\alpha \de_z|\nabla|^{-1}\widetilde{Q}_0-\nu\Delta \widetilde{W}_0=\sqrt{\frac{\beta}{\beta-1}}\abs{\nabla}\de_z\widetilde{\mathcal{T}}_0(U,U^1).
    \end{cases}
\end{equation}

Having studied the linearization of \eqref{eq:simplezero_QK} in Section \ref{ssec:lin_stable} and in particular established the dispersive decay \eqref{eq:main_lin_disp}, we can build on this to obtain amplitude bounds also for the nonlinear evolution of the simple zero modes: 
\begin{proposition}[Compare Proposition 3.3 in \cite{CZDZW24}]\label{prop:nonlinear_dispersive_estimates}
Under the assumptions of Theorem \ref{thm:bootstrap}, there holds that
    \begin{equation}\label{eq:nonlinear_dispersive_estimate}
        \sum_{a\in\NN^2_0, |a|\leq 2}\norm{\nabla_{y,z}^a\widetilde{U}_0(t)}_{L^\infty}\lesssim \alpha^{-\frac13}t^{-\frac13}\e^{-\nu t} \eps + \alpha^{-\frac13}\nu^{-\frac23}\eps^2.
    \end{equation}
\end{proposition}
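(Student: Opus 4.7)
The plan is to work in the diagonalizing unknowns $G_\pm$ of \eqref{eq:Gpm} for the simple zero mode system \eqref{eq:simplezero_QK}, rewriting it as
$$\de_t G_\pm=\cL_\pm G_\pm+\cN_\pm,$$
where $\cN_\pm$ collects the simple zero mode projections of the nonlinear right-hand sides in \eqref{eq:simplezero_QK} (combined with $\pm i\de_z\abs{\de_z}^{-1}$ as in the definition of $G_\pm$). Duhamel's formula then reads
$$G_\pm(t)=\e^{t\cL_\pm}G_\pm(0)+\int_0^t\e^{(t-s)\cL_\pm}\cN_\pm(s)\,\dd s,$$
and \eqref{eq:u_recover} together with \eqref{eq:unknows} expresses each $\nabla_{y,z}^a\widetilde{U}_0^j$ with $|a|\leq 2$ as a Fourier multiplier applied to $G_\pm$ whose symbol falls in the class covered by the linear dispersive bound \eqref{eq:lin_disp_est}.

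Applying \eqref{eq:lin_disp_est} to each term of the Duhamel formula yields
$$\sum_{|a|\leq 2}\|\nabla_{y,z}^a\widetilde{U}_0(t)\|_{L^\infty}\lesssim (\alpha t)^{-1/3}\e^{-\nu t}\|G_\pm(0)\|_{W^{4,1}}+\int_0^t (\alpha(t-s))^{-1/3}\e^{-\nu(t-s)}\|\cN_\pm(s)\|_{W^{4,1}}\,\dd s.$$
The initial data contribution is immediately bounded by $\alpha^{-1/3}t^{-1/3}\e^{-\nu t}\eps$, since $\|G_\pm(0)\|_{W^{4,1}}\lesssim\|u(0)\|_{W^{6,1}}\lesssim\eps$ (using $\widetilde{Q}_0=\Delta\widetilde{U}_0^2$ and the analogous expression for $\widetilde{W}_0$). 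For the Duhamel integral, the goal is then to establish the uniform-in-time bound $\|\cN_\pm(s)\|_{W^{4,1}}\lesssim\eps^2$; granted this, the second term in \eqref{eq:nonlinear_dispersive_estimate} follows from the elementary
$$\int_0^t (t-s)^{-1/3}\e^{-\nu(t-s)}\,\dd s\lesssim \nu^{-2/3},$$
obtained by scaling $u=\nu(t-s)$ in $\int_0^{\infty}u^{-1/3}\e^{-u}\,\dd u$.

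The main step is therefore the uniform bound $\|\cN_\pm(s)\|_{W^{4,1}}\lesssim\eps^2$. I would split $U=\overline U_0+\widetilde U_0+U_{\neq}$ and enumerate the bilinear interactions arising in $\widetilde{\mathcal{P}}_0(U,U)$ and $\widetilde{\mathcal{T}}_0(U,U^j)$, noting that only products whose $x$-average falls in the simple zero sector contribute. In each case, I would estimate the $W^{4,1}(\RR\times\TT)$ norm of a product $FH$ by $\|F\|_{H^{N'}}\|H\|_{H^{N'}}$ using Leibniz, Cauchy--Schwarz and Sobolev embedding (with $N'$ fixed depending on the number of derivatives inherent in $\widetilde{\mathcal{P}}_0,\widetilde{\mathcal{T}}_0$), and close the bound by $\eps^2$ via the bootstrap assumptions \eqref{eq:00-btstrap}, \eqref{eq:s0-btstrap}, \eqref{eq:btstrap_Q_neq}, \eqref{eq:btstrap_W_neq} together with Lemma \ref{lemma:additional_estim_U}. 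The hardest case is expected to be the nonzero/nonzero projections, where the multiplier $\fm$ must be unpacked via \eqref{eq:absorption_nablaL}--\eqref{eq:reconstruction_m} without producing negative powers of $\nu$ that would spoil the $\nu^{-2/3}$ threshold in \eqref{eq:nonlinear_dispersive_estimate}; this is accommodated by taking $N$ sufficiently large relative to the derivative losses incurred in these reconstructions.
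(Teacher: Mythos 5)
Your proposal follows the same scheme as the paper's proof: diagonalize the simple zero system via the unknowns of \eqref{eq:Gpm}, apply Duhamel's formula, invoke the linear dispersive estimate \eqref{eq:lin_disp_est} for the semigroup $\e^{t\cL_\pm}$, and close the nonlinear Duhamel integral using the $\nu^{-2/3}$ gain from $\int_0^\infty u^{-1/3}\e^{-\nu u}\,\dd u$ together with a uniform-in-time $W^{4,1}$ bound on the nonlinearity coming from the bootstrap hypotheses. The only cosmetic difference is that the paper works with the single complex unknown $\Gamma=-\abs{\nabla}^{-2}(\widetilde{Q}_0+i\de_z\abs{\de_z}^{-1}\widetilde{W}_0)=-\abs{\nabla}^{-2}G_+$, i.e.\ a rescaling of your $G_+$ by $-\abs{\nabla}^{-2}$ (no need to also track $G_-$ since $\widetilde{Q}_0,\widetilde{W}_0$ are real); consequently the multipliers relating $\nabla_{y,z}^a\widetilde{U}_0^j$ to $\Gamma$ are of nonnegative order and extra derivatives are shifted onto the $W^{k,1}$ norm of $\Gamma(0)$ and $\cN$, whereas in your $G_\pm$ formulation they are of nonpositive order and the extra derivatives are already contained in $G_\pm$ and $\cN_\pm$ themselves -- the two bookkeeping conventions are equivalent and use the same total regularity. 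One small caveat on your last paragraph: for the nonzero/nonzero contributions, avoiding parasitic $\nu^{-1/3}$ factors is not a matter of taking $N$ large; it comes from choosing the reconstruction bound \eqref{eq:reconstruction_m} (which trades $\nabla_{x,z}$-derivatives, bounded below on nonzero modes, for $\fm$) rather than \eqref{eq:absorption_nablaL}, if necessary after rewriting the transport nonlinearity in divergence form via incompressibility so that the derivatives fall where \eqref{eq:reconstruction_m} applies.
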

\begin{proof}
    The proof proceeds as in \cite{CZDZW24}*{Proposition 3.3}, so we refer the reader there for full details and only provide here a sketch of the main steps. To establish the claim, it is sufficient to prove suitable bounds for the complex unknown
    \begin{equation}
        \Gamma := -\abs{\nabla}^{-2}(\widetilde{Q}_0+i\de_z\abs{\de_z}^{-1}\widetilde{W}_0)=\widetilde{U}^2_0-i\sqrt{\frac{\beta}{\beta-1}}|\de_z||\nabla|^{-1}\widetilde{U}^1_0,
    \end{equation}
    from which we can also recover $\widetilde{U}^3_0$ using incompressibility. We note -- see also \eqref{eq:Gpm}, \eqref{eq:Gpm-Lpm} -- that $\Gamma$ satisfies
    \begin{equation}
        \de_t\Gamma=\cL_+\Gamma+\cN(U,\Gamma),\qquad \cL_+=\nu\Delta\pm i \alpha |\de_z||\nabla|^{-1},
    \end{equation}
    where $\cN$ is given by 
    \begin{equation}
    \begin{aligned}
      \cN(U,\Gamma)&=\de_y\widetilde{\mathcal{P}}(U,U)_0-\widetilde{\mathcal{T}}(U,U^2)_0+i\abs{\de_z}\abs{\nabla}^{-1}\widetilde{\mathcal{T}}(U,U^1)_0.
    \end{aligned}  
    \end{equation}
    To prove the claim we write Duhamel's formula
    \begin{equation}
     \Gamma(t)=\e^{t\cL_+}\Gamma(0) +\int_0^t \e^{(t-s)\cL_+}\cN(U,\Gamma)(s)\dd s  
    \end{equation}
    and apply the linear decay estimate \eqref{eq:lin_disp_est} for the semigroup $\cL_+$ (see also \cite{CZDZW24}*{Corollary 3.2}).
\end{proof}

\subsection{Double zero modes}\label{sec:double_zero}
The only double zero modes that have to be studied are $\overline{U}^1_0$ and $\overline{U}^3_0$, since $\overline{U}^2_0=0$ due to incompressibility and integrability. Both satisfy the same equation, which reads
\begin{equation}\label{eq:doublezero_eq}
    \de_t\overline{F}_0 -\nu\de_y^2\overline{F}_0=-\de_y\overline{(\widetilde{U}^2_0\widetilde{F}_0)}_0-\de_y\overline{(U^2_{\neq} F_{\neq})}_0,\qquad F\in\{U^1,U^3\}.
\end{equation}
Remarkably and crucially, there are no double zero self interactions. We exploit Proposition \ref{prop:nonlinear_dispersive_estimates} to prove the following bounds for the $H^{N}$ norm of $\overline{F}_0$, which proves part \ref{it:00-btstrap} of Theorem \ref{thm:bootstrap}.

\begin{proposition}[In analogy to Lemma 3.6 in \cite{CZDZW24}]\label{prop:doublezero_estimates}
    Let $\overline{F}_0$ solve the equation \eqref{eq:doublezero_eq} with $\overline{F}_0(0)=0$ and $F\in\{U^1,U^3\}$, then we have 
    \begin{equation}
        \norm{\overline{F}_0}^2_{L^\infty_tH^{N}}+\nu\norm{\de_y\overline{F}_0}^2_{L^2_tH^{N}}\lesssim (\nu^{-\frac12}\eps+\alpha^{-\frac13}\nu^{-\frac53}\eps^2)^2\eps^2.
    \end{equation}
\end{proposition}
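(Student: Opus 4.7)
The plan is a direct $H^N$ energy estimate for the scalar diffusion equation \eqref{eq:doublezero_eq}, with the two nonlinear source terms controlled using the dispersive decay for simple zero modes from Proposition \ref{prop:nonlinear_dispersive_estimates} and the enhanced-dissipation $L^2_t$ information for non-zero modes encoded in Lemma \ref{lem:more_enh_dissip_neq} together with \eqref{eq:extratimedecay}. Concretely, I would test \eqref{eq:doublezero_eq} against $\overline F_0$ in $H^N$, integrate by parts the outer $\de_y$ on each source term, apply Young's inequality to absorb half of the dissipation, and use $\overline F_0(0)=0$ (from \eqref{eq:initial_data}) to arrive at
\begin{equation}
\norm{\overline F_0}_{L^\infty_t H^N}^2 + \nu\norm{\de_y\overline F_0}_{L^2_t H^N}^2 \lesssim \nu^{-1}\int_0^t\bigl(\norm{\widetilde U^2_0\widetilde F_0}_{H^N}^2 + \norm{U^2_{\neq}F_{\neq}}_{H^N}^2\bigr)\dd s.
\end{equation}

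For the non-zero contribution, the Sobolev algebra ($N>3$) gives $\norm{U^2_{\neq}F_{\neq}}_{H^N}\lesssim \norm{U^2_{\neq}}_{H^N}\norm{F_{\neq}}_{H^N}$; I would place $F_{\neq}$ in $L^\infty_t H^N$ (of size $\lesssim\eps$ via \eqref{eq:reconstruction_m} and \eqref{eq:btstrap_W_neq}) and $U^2_{\neq}$ in $L^2_t H^N$. The point is that \eqref{eq:extratimedecay} combined with the $\sqrt{-\dot\cM/\cM}\cA Q_{\neq}$ term in \eqref{eq:btstrap_Q_neq} yields $\norm{U^2_{\neq}}_{L^2_t H^N}\lesssim \eps$ without any loss of $\nu$ (using $|\nabla_{x,z}|\geq 1$ on non-zero modes), contributing $\nu^{-1}\eps^4$ and matching the $(\nu^{-1/2}\eps)^2\eps^2$ summand of the claim.

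For the simple-zero contribution I would apply Moser's product inequality on $\RR\times\TT$,
\begin{equation}
\norm{\widetilde U^2_0\widetilde F_0}_{H^N}\lesssim \norm{\widetilde U^2_0}_{L^\infty}\norm{\widetilde F_0}_{H^N}+\norm{\widetilde U^2_0}_{H^N}\norm{\widetilde F_0}_{L^\infty},
\end{equation}
bounding the $H^N$ factors uniformly by $\eps$ via \eqref{eq:s0-btstrap} and \eqref{eq:simple0_conversion}, and the $L^\infty$ factors by \eqref{eq:nonlinear_dispersive_estimate}. Squaring, integrating in time, and using the standard $\int_0^\infty s^{-2/3}\e^{-2\nu s}\dd s\lesssim \nu^{-1/3}$ produces, after division by $\nu$, the $\alpha^{-2/3}\nu^{-10/3}\eps^6 = (\alpha^{-1/3}\nu^{-5/3}\eps^2)^2\eps^2$ piece of the claim. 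The main obstacle is the non-decaying tail $\alpha^{-1/3}\nu^{-2/3}\eps^2$ in \eqref{eq:nonlinear_dispersive_estimate}, whose naive $L^2_t$-norm grows linearly in time; following the blueprint of \cite{CZDZW24}*{Lemma 3.6}, this is resolved by unpacking the Duhamel representation behind Proposition \ref{prop:nonlinear_dispersive_estimates}, where the forcing $\cN$ inherits (from the $\e^{-\nu t}$ linear decay of simple zero modes together with the enhanced-dissipation decay of the non-zero contributions) an integrable time decay that, combined with the $\e^{-\nu(t-s)}$ linear dispersive kernel, supplies an extra $\e^{-\nu t/2}$ factor on the tail and furnishes the missing $\nu^{-1}$ needed to reach the $\nu^{-5/3}$ scaling. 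Combining the two contributions then yields the claimed bound.
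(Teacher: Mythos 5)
Your framework matches the paper's proof almost exactly: the same $H^N$ energy estimate with $\overline F_0(0)=0$, the same splitting $(\overline{U^2 F})_0 = (\overline{\widetilde U^2_0\widetilde F_0})_0 + (\overline{U^2_{\neq} F_{\neq}})_0$ exploiting $\overline U^2_0 = 0$, the same treatment of the non-zero part via \eqref{eq:extratimedecay} and the $\sqrt{-\dot\cM/\cM}\cA Q_{\neq}$ bootstrap quantity, and the same Moser product plus dispersive $L^\infty$ bound for the simple zero part. The non-zero contribution as you present it is fine and gives $\nu^{-1}\eps^4$ as desired.

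However, there is a genuine gap in your handling of the simple zero contribution, and also a small arithmetic slip. The slip: with the $H^N$ factors held in $L^\infty_t$, the $\tau^{-1/3}\e^{-\nu\tau}$ piece of \eqref{eq:nonlinear_dispersive_estimate} produces $\nu^{-1}\alpha^{-2/3}\eps^2\nu^{-1/3}\eps^2 = \alpha^{-2/3}\nu^{-4/3}\eps^4$, which is the \emph{first} summand the paper arrives at, not the $\alpha^{-2/3}\nu^{-10/3}\eps^6$ piece you attribute to it. The gap: your mechanism for taming the constant tail $\alpha^{-1/3}\nu^{-2/3}\eps^2$ in \eqref{eq:nonlinear_dispersive_estimate} — re-opening the Duhamel formula behind Proposition \ref{prop:nonlinear_dispersive_estimates} to extract an extra $\e^{-\nu t/2}$ on the tail — is not justified and not what the paper does. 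The nonlinear forcing $\cN$ in that Duhamel formula contains interactions with double zero modes $\overline U^j_0$, which do \emph{not} decay in time (the bootstrap only gives $\norm{\overline U^j_0}_{L^\infty_t H^N}\lesssim\eps$), so the claimed integrable time decay of $\norm{\cN}_{W^{4,1}}$ does not follow from what you cite; it would also make the argument circular, since you are in the middle of estimating $\overline U^j_0$ itself. The paper avoids this entirely: instead of bounding $\norm{\widetilde F_0}_{H^N}$ uniformly by $\eps$, it keeps it as a factor inside the time integral and, for the constant-tail term, places it in $L^2_t$. Since simple zero modes have $|l|\geq 1$, one has $\norm{\widetilde Q_0}_{H^N}\leq\norm{\de_z\widetilde Q_0}_{H^N}\leq\norm{\nabla\widetilde Q_0}_{H^N}$, so \eqref{eq:s0-btstrap} gives $\int_0^T\norm{\widetilde Q_0}^2_{H^N}\,\dd\tau\lesssim\nu^{-1}\eps^2$, and then
\begin{equation}
\nu^{-1}\int_0^T(\alpha^{-\frac13}\nu^{-\frac23}\eps^2)^2\norm{\widetilde Q_0(\tau)}_{H^N}^2\,\dd\tau \lesssim \nu^{-1}\cdot\alpha^{-\frac23}\nu^{-\frac43}\eps^4\cdot\nu^{-1}\eps^2 = \alpha^{-\frac23}\nu^{-\frac{10}{3}}\eps^6,
\end{equation}
which is exactly the piece you were missing. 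This is simpler, non-circular, and uses only the bootstrap assumptions at hand.
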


\begin{proof}
 Since $\overline{U}^2_0=0$, there holds that
 \begin{equation}\label{eq:noselfdouble0}
  (\overline{U^2F})_0=(\overline{U^2_0\widetilde{F}_0})+(\overline{U^2_{\neq} F_{\neq}})_0,
 \end{equation}
 and by energy estimates it follows that for $F\in\{U^1,U^3\}$
 \begin{equation}\label{eq:energy_doublezeros}
   \norm{\overline{F}_0(t)}_{H^{N}}^{2}+\nu\int_0^t\norm{\de_y\overline{F}_0}_{H^{N}}^2\dd \tau \leq \nu^{-1}\int_0^t  \norm{(\overline{\widetilde{U}^2_0\widetilde{F}_0})(\tau)}_{H^{N}}^2 +\norm{(\overline{U^2_{\neq} F_{\neq}})_0(\tau)}_{H^{N}}^2\dd\tau.
 \end{equation}
 Since
 \begin{equation}
   \norm{(\overline{\widetilde{U}^2_0\widetilde{F}_0})(\tau)}_{H^{N}}\lesssim \norm{\widetilde{U}_0^2(\tau)}_{L^\infty}\norm{ \widetilde{F}_0(\tau)}_{H^{N}}+\norm{\widetilde{Q}_0(\tau)}_{H^{N}}\norm{\widetilde{F}_0(\tau)}_{L^\infty},
 \end{equation}
 we obtain from Proposition \ref{prop:nonlinear_dispersive_estimates} that
\begin{equation}
 \norm{(\overline{\widetilde{U}^2_0\widetilde{U}_0^1})(\tau)}_{H^{N}}+\norm{(\overline{\widetilde{U}^2_0\widetilde{U^3}_0})(\tau)}_{H^{N}}\lesssim \alpha^{-\frac13}\eps \left(\tau^{-\frac13}\e^{-\nu\tau}+\eps\nu^{-\frac23}\right)(\norm{\widetilde{Q}_0(\tau)}_{H^{N}}+\norm{\widetilde{W}_0(\tau)}_{H^{N}}),
\end{equation}
having used that $\widetilde{U}_0^3=-\de_z^{-1}\de_y\widetilde{U}_0^2$ and the expression of $\widetilde{U}^2_0$ and $\widetilde{U}^1_0$ in terms of $Q$ and $W$, see \eqref{eq:U-QW}. For the nonzero modes, it follows that
\begin{align}
 \norm{(\overline{U^2_{\neq} U^3_{\neq}})_0(\tau)}_{H^{N}}+\norm{(\overline{U^2_{\neq} U^1_{\neq}})_0(\tau)}_{H^{N}}&\lesssim \norm{\sqrt{-\frac{\dot \cM}{\cM}}\cA Q_{\neq}(\tau)}_{H^N}\norm{\cA (Q,W)_{\neq}(\tau)}_{H^N}.
\end{align}
Substituting back in \eqref{eq:energy_doublezeros} and integrating in time leads to
\begin{align}    &\norm{\overline{U}^1_0(t)}_{H^{N}}^2+\norm{\overline{U}^3_0(t)}_{H^{N}}^2+\nu\int_0^t\norm{\de_y\overline{U}^1_0(\tau)}_{H^{N}}^2+\norm{\de_y\overline{U}^3_0(\tau)}_{H^{N}}^2\dd \tau  \notag\\
    &\qquad\lesssim\eps^2\cdot\alpha^{-\frac23}(\eps^2\nu^{-\frac43}+\eps^4\nu^{-\frac{10}{3}})+\eps^2\cdot \eps^2\nu^{-1}.
  \end{align} 
This concludes the proof.  
\end{proof}

\subsection{Simple zero modes}\label{sec:simple_zero}
Thanks to the symmetric structure of the equations \eqref{eq:simplezero_QK} for the simple zero modes, energy estimates give control of their $H^N$ norms. We summarize this as follows:
\begin{proposition}\label{prop:simple0_finalbound}
    Consider the simple zero modes, satisfying \eqref{eq:simplezero_QK}. Under the assumptions of Theorem \ref{thm:bootstrap} we have
    \begin{equation} \label{eq:s0_full_bound}
    \norm{\widetilde{F}_0}^2_{L^\infty_tH^N}+\nu\norm{\nabla\widetilde{F}_0}^2_{L^2_tH^N}\lesssim (\nu^{-\frac56}\eps +\alpha^{-\frac13}\nu^{-\frac83}\eps^3)\eps^2,\qquad F\in\{Q,W\}.
    \end{equation}
\end{proposition}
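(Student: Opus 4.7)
The plan is a coupled $H^N$ energy estimate for the pair $(\widetilde{Q}_0,\widetilde{W}_0)$, leveraging two fortunate structural features of \eqref{eq:simplezero_QK}: the oscillatory coupling $\alpha\de_z|\nabla|^{-1}$ between the two equations is antisymmetric (so it cancels once one tests the $\widetilde Q_0$ equation against $\widetilde Q_0$ and the $\widetilde W_0$ equation against $\widetilde W_0$ in $H^N$ and adds), and the slowly-decaying inviscid-damping term $\de_x\de_y^L|\nabla_L|^{-2}W$ that appears in the full system \eqref{eq:nonlin} vanishes identically on simple zero modes since $\de_x$ annihilates $x$-independent functions. Consequently the linear part of the simple-zero system is purely dissipative, producing
\begin{equation}
\ddt\bigl(\|\widetilde{Q}_0\|_{H^N}^2+\|\widetilde{W}_0\|_{H^N}^2\bigr)+2\nu\bigl(\|\nabla\widetilde{Q}_0\|_{H^N}^2+\|\nabla\widetilde{W}_0\|_{H^N}^2\bigr)=\mathcal{R},
\end{equation}
with $\mathcal{R}$ collecting the nonlinearities. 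On the RHS, I would first move one derivative off the highest-order operators via integration by parts: for $-\de_y\Delta\widetilde{\mathcal{P}}_0(U,U)$, the algebraic identity $\Delta\mathcal{P}(U,U)=-\sum_{i,j}\de_i^LU^j\de_j^LU^i$ cancels the outer Laplacian, and a single IBP plus Young's inequality absorbs $\tfrac{\nu}{4}\|\de_y\widetilde Q_0\|_{H^N}^2$ into the dissipation, reducing matters to $\nu^{-1}\|\widetilde{(\nabla_LU\otimes\nabla_LU)}_0\|_{H^N}^2$. For $\Delta\widetilde{\mathcal{T}}_0(U,U^2)$, a single IBP leaves $\nu^{-1}\|\nabla\widetilde{\mathcal{T}}_0(U,U^2)\|_{H^N}^2$, costing one derivative of regularity that is accommodated by the $H^{N+2}$ buffer on initial data. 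The $\widetilde W_0$ equation is treated analogously, with $|\nabla|\de_z$ split symmetrically.

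Next, each resulting bilinear form is decomposed by the zero-mode structure into three types of contributions, each bounded by a different mechanism. The double--simple pieces $\overline{U}_0\nabla\widetilde{U}_0$ and $\widetilde{U}_0\nabla\overline{U}_0$ are controlled by the smallness of $\overline{U}_0$ from Proposition \ref{prop:doublezero_estimates}. The simple--simple pieces $\widetilde U_0\nabla\widetilde U_0$ are estimated by placing one factor in $L^\infty$ via the dispersive bound of Proposition \ref{prop:nonlinear_dispersive_estimates}: the decaying part $\alpha^{-1/3}t^{-1/3}\e^{-\nu t}\eps$ has $L^2_t$ norm of size $\alpha^{-1/3}\nu^{-1/6}\eps$ (using $\int_0^\infty s^{-2/3}\e^{-2\nu s}\dd s\lesssim\nu^{-1/3}$), while the constant-in-time tail $\alpha^{-1/3}\nu^{-2/3}\eps^2$ must be paired with a companion factor that provides the $L^2_t$ gain, namely $\|\nabla\widetilde U_0\|_{L^2_tH^N}\lesssim\nu^{-1/2}\eps$ from the dissipation. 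The nonzero--nonzero contributions $(U_{\neq}\cdot\nabla U_{\neq})_0$ are handled via the enhanced dissipation of Lemma \ref{lem:more_enh_dissip_neq}, the extra time decay for $U^2_{\neq}$ encoded in $\mathcal{M}_2$ through \eqref{eq:btstrap_Q_neq}--\eqref{eq:btstrap_W_neq}, and the Sobolev conversion estimates of Lemma \ref{lemma:additional_estim_U}. After $L^2_t$ integration and application of the $\nu^{-1}$ prefactor from the dissipation absorption, the first and third types combine to give the $\nu^{-5/6}\eps^3$ term in \eqref{eq:s0_full_bound}, and the dispersive simple--simple interactions produce the $\alpha^{-1/3}\nu^{-8/3}\eps^5$ term.

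The hard part is reconciling the high-order derivatives (up to $\de_y\Delta$ and $|\nabla|\de_z$) on the RHS of \eqref{eq:simplezero_QK} with the modest available dissipation $\nu\|\nabla\widetilde F_0\|_{H^N}^2$, which only accommodates a single IBP. The $\Delta\Delta^{-1}$ cancellation for the pressure is therefore essential -- otherwise two derivatives would be lost simultaneously -- and for the transport term one must commit the extra derivative to the $H^{N+2}$ regularity buffer. A more delicate point is that the constant-in-time tail in Proposition \ref{prop:nonlinear_dispersive_estimates} carries no $L^2_t$ decay on its own, so one has to split the dispersive estimate and carefully pair the non-decaying part with the $L^2_t$-dissipative factor $\|\nabla\widetilde U_0\|_{L^2_tH^N}$; tracking constants through this pairing yields precisely the $\alpha^{-1/3}\nu^{-8/3}\eps^3$ factor, whose interplay with the $\eps\leq c_1\nu^{5/6}$ threshold in the fast-rotation regime of \eqref{eq:transition_thm} is the ultimate source of sharpness.
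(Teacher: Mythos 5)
Your proposal follows essentially the same route as the paper's proof: a coupled $H^N$ energy estimate for $(\widetilde Q_0,\widetilde W_0)$ in which the $\alpha\de_z|\nabla|^{-1}$ coupling cancels by antisymmetry, a single integration by parts distributing derivatives (exploiting the $\Delta\Delta_L^{-1}$ cancellation for the pressure), and a decomposition of the nonlinear terms by zero-mode type controlled respectively by dispersion (Proposition \ref{prop:nonlinear_dispersive_estimates}), double-zero smallness (Proposition \ref{prop:doublezero_estimates}), and enhanced dissipation together with the conversion estimates of Lemma \ref{lemma:additional_estim_U}. One small misattribution worth noting: the $\alpha^{-1/3}\nu^{-8/3}\eps^3$ factor in \eqref{eq:s0_full_bound} actually comes from the double--simple interactions such as $\de_y\overline U^3_0\,\de_z\widetilde U^2_0$, via the $\alpha^{-1/3}\nu^{-5/3}\eps^2$ piece inside Proposition \ref{prop:doublezero_estimates}, rather than directly from the simple--simple terms (which only contribute the smaller $\alpha^{-1/3}\nu^{-5/3}\eps^2$ prefactor); likewise, the paper uses Cauchy--Schwarz in $t$ together with the bootstrap bound $\|\nabla\widetilde Q_0\|_{L^2_tH^N}\lesssim\nu^{-1/2}\eps$ rather than Young's inequality plus absorption, which is why the proposition is stated with the linear-in-nonlinearity $\nu^{-5/6}\eps^3$-type prefactors rather than the squared $\nu^{-1}\|\mathrm{NL}\|^2_{L^2_t}$ quantities your route would produce (the two are equivalent for closing the bootstrap under $\eps\lesssim\nu^{5/6}$, but your version would not literally match \eqref{eq:s0_full_bound}).
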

This threshold for simple zero modes is dictated by nonlinear interactions involving double zero modes. It establishes part \ref{it:s0-btstrap} of Theorem \ref{thm:bootstrap}. 


\begin{proof}[Proof of Proposition \ref{prop:simple0_finalbound}]
An $H^N$ energy estimate for $\widetilde{Q}_0$ and $\widetilde{W}_0$ gives
    \begin{equation}\label{eq:s0_energy}
    \begin{aligned}
        &\frac{1}{2}\left(\norm{\widetilde{Q}_0(t)}^2_{H^N}+\norm{\widetilde{W}_0(t)}^2_{H^N}\right)+\nu\left(\norm{\nabla \widetilde{Q}_0}^2_{L^2_tH^N}+\norm{\nabla \widetilde{W}_0}^2_{L^2_tH^N}\right) \\  
        &\qquad= \frac{1}{2}\left(\norm{\widetilde{Q}_0(0)}^2_{H^N}+\norm{\widetilde{W}_0(0)}^2_{H^N}\right)- \int_0^t\l \widetilde{Q}_0,\de_y\Delta\widetilde{\mathcal{P}}_0(U,U)\r_{H^N} \\
        &\qquad\quad-\int_0^t\l \widetilde{Q}_0,\Delta\widetilde{\mathcal{T}}_0(U,U^2)\r_{H^N} +\sqrt{\frac{\beta}{\beta-1}}\int_0^t\l \widetilde{W}_0, \de_z|\nabla|\widetilde{\mathcal{T}}_0(U,U^1) \r_{H^N}.
    \end{aligned}    
    \end{equation}
    By assumption \eqref{eq:initial_data} on the initial data, we have that
    \begin{equation}
        \frac{1}{2}\left(\norm{\widetilde{Q}_0(0)}^2_{H^N}+\norm{\widetilde{W}_0(0)}^2_{H^N}\right)\leq \eps^2,
    \end{equation}
    and thus to prove Proposition \ref{prop:simple0_finalbound} it remains to show suitable bounds on the last three terms in \eqref{eq:s0_energy}. These are provided in Lemmas \ref{lem:s0_pressure}, \ref{lem:s0_Qtransport} and \ref{lem:s0_Wtransport} below and yield that
    \begin{align}
        &\frac{1}{2}\left(\norm{\widetilde{Q}_0(t)}^2_{H^N}+\norm{\widetilde{W}_0(t)}^2_{H^N}\right)+\nu\left(\norm{\nabla \widetilde{Q}_0}^2_{L^2_tH^N}+\norm{\nabla \widetilde{W}_0}^2_{L^2_tH^N}\right)-\frac{1}{2}\left(\norm{\widetilde{Q}_0(0)}^2_{H^N}+\norm{\widetilde{W}_0(0)}^2_{H^N}\right)\notag\\
        &\qquad \lesssim (\nu^{-\frac56}\eps+\alpha^{-\frac13}\nu^{-\frac83}\eps^3)\eps^2.
    \end{align}    
\end{proof}

We begin by treating the terms involving the contributions from the pressure. 

\begin{lemma}\label{lem:s0_pressure}
Under the assumptions of Theorem \ref{thm:bootstrap} there holds that
    \begin{equation}\label{eq:s0_pressure_bound}
            \int_0^T\abs{\l \widetilde{Q}_0,\de_y\Delta\widetilde{\mathcal{P}}_0(U,U)\r_{H^N}} = \int_0^T\abs{\sum_{1\leq i,j\leq 3}\l \de_y \widetilde{Q}_0,(\de_i^LU^j\de_j^LU^i)_0\r_{H^N}}\lesssim (\nu^{-\frac56}\eps+\alpha^{-\frac13}\nu^{-\frac83}\eps^3)\eps^2.
        \end{equation}
\end{lemma}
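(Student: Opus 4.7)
The identity stated in the lemma follows from writing $\widetilde{\mathcal{P}}_0(U,U) = -\Delta^{-1}\sum_{i,j}(\partial_i^L U^j \partial_j^L U^i)_{0}$ (noting that $\Delta_L = \Delta$ on the simple-zero sector), applying $\partial_y \Delta$, and integrating by parts once in $y$. The plan is then to exploit the prefactor $\partial_y \widetilde{Q}_0$ on the left: by Cauchy--Schwarz in $t$ it is controlled by the dissipation norm $\norm{\partial_y \widetilde{Q}_0}_{L^2_t H^N} \lesssim \nu^{-1/2}\eps$ provided by the bootstrap assumption \eqref{eq:s0-btstrap}, paying a factor $\nu^{-1/2}$.

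Next I decompose $U = \overline{U}_0 + \widetilde{U}_0 + U_{\neq}$ and observe that the simple-zero projection annihilates all products of one double-zero and one non-zero factor (they have only non-zero $x$-frequencies), as well as any purely double-zero product (which depends only on $y$). Hence the surviving contributions to $(\partial_i^L U^j \partial_j^L U^i)_0$ split into three classes: \textbf{(a)} double-zero $\times$ simple-zero, \textbf{(b)} simple-zero $\times$ simple-zero, and \textbf{(c)} non-zero $\times$ non-zero.

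For class (c) I use the Sobolev algebra property of $H^N$ (valid since $N>3>3/2$) to split $\norm{(\partial_i^L U^j_{\neq} \partial_j^L U^i_{\neq})_0}_{H^N} \lesssim \norm{\partial_i^L U^j_{\neq}}_{H^N}\norm{\partial_j^L U^i_{\neq}}_{H^N}$, then trade $|\nabla_L|$ derivatives for factors $\nu^{-1/3}$ via \eqref{eq:absorption_nablaL}--\eqref{eq:reconstruction_m} and apply Lemma \ref{lem:more_enh_dissip_neq} to handle one factor in $L^2_t H^N$ at cost $\nu^{-1/6}\eps$, while the other is controlled in $L^\infty_t H^N$ by $\eps$ through \eqref{eq:btstrap_Q_neq}--\eqref{eq:btstrap_W_neq} together with \eqref{eq:A-Sobbd}. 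Combining gives $\nu^{-1/2}\cdot \nu^{-1/3}\cdot \nu^{-1/6}\cdot\eps^3 = \nu^{-5/6}\eps^3$, matching the first term in the target. For classes (a) and (b) I apply the nonlinear dispersive estimate of Proposition \ref{prop:nonlinear_dispersive_estimates} to place one $\widetilde{U}_0$-factor in $L^\infty_{y,z}$ (converted to velocity via \eqref{eq:simple0_conversion}), while the remaining factor is placed in $L^\infty_t H^N$ (using the bootstrap for (b) or Proposition \ref{prop:doublezero_estimates} for (a)). Splitting the dispersive bound into its two pieces, the integrable piece $\alpha^{-1/3}t^{-1/3}e^{-\nu t}\eps$ contributes via its $L^2_t$ norm $\lesssim \alpha^{-1/3}\nu^{-1/6}\eps$, while the constant-in-$t$ remainder $\alpha^{-1/3}\nu^{-2/3}\eps^2$ is paired with an $L^2_t H^N$ bound on the remaining simple-zero factor obtained from the dissipation of $\widetilde{Q}_0$; when iterated through the $\overline{U}_0$ bound $\lesssim (\nu^{-1/2}\eps+\alpha^{-1/3}\nu^{-5/3}\eps^2)\eps$ from Proposition \ref{prop:doublezero_estimates}, this produces the remaining contribution $\alpha^{-1/3}\nu^{-8/3}\eps^5$.

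The main obstacle will be the careful bookkeeping of the $|\nabla_L|$ powers -- in particular, converting between the good unknowns $(Q,W)$ and the velocity components $U^j$ through \eqref{eq:U-QW} generates inverse powers of $|\nabla_L|$ which must be absorbed using \eqref{eq:absorption_nablaL}--\eqref{eq:reconstruction_m} at the cost of $\nu^{-1/3}$. A subsidiary difficulty is that the constant-in-$t$ piece $\alpha^{-1/3}\nu^{-2/3}\eps^2$ of the dispersive bound is not directly $L^2_t$-integrable on $[0,\infty)$, so it must be paired with a time-integrable factor from the dissipation of $\widetilde{Q}_0$ rather than inserted into $L^2_t L^\infty$; managing this redistribution of time norms while keeping the $\eps$-powers balanced is what fixes the final scaling.
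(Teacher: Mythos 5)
Your decomposition of $U$ into double-zero, simple-zero, and non-zero parts, the observation that $(\cdot)_0$ annihilates double-zero$\times$non-zero and double-zero$\times$double-zero products, and the overall strategy (Cauchy--Schwarz paying $\nu^{-1/2}$ for $\norm{\de_y\widetilde{Q}_0}_{L^2_tH^N}$, dispersion for classes (a), (b), energy/multiplier bounds for class (c)) are all in line with the paper. However, there is a concrete gap in class (c) that you have masked by an arithmetic slip.

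You state that combining the $\nu^{-1/2}$ from $\de_y\widetilde Q_0$, a $\nu^{-1/3}$ from the derivative count \eqref{eq:absorption_nablaL}, and a $\nu^{-1/6}$ from the $L^2_t$ bound of Lemma \ref{lem:more_enh_dissip_neq} yields $\nu^{-5/6}$. It does not: $\tfrac12+\tfrac13+\tfrac16 = 1$, so that route gives $\nu^{-1}\eps^3$, which exceeds the required $\nu^{-5/6}\eps^3$. The worst term in class (c) is $(\de_y^L U^{j}_{\neq}\,\de_{j}U^{2}_{\neq})_0$ with $j\in\{1,3\}$, the one the paper treats in \eqref{eq:s0_pressure_bound_3}. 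Since $j\in\{1,3\}$ the derivative $\de_j$ is a horizontal derivative $|\nabla_{x,z}|$, and the key is to use \eqref{eq:extratimedecay}, i.e.\ the $\cM_2$ ghost multiplier built precisely to capture the additional decay of $U^2_{\neq}$: this gives $\norm{\de_j U^2_{\neq}}_{H^N}\lesssim \norm{\sqrt{-\dot\cM/\cM}\,\cA Q_{\neq}}_{H^N}$, and the bootstrap \eqref{eq:btstrap_Q_neq} then bounds this factor in $L^2_tH^N$ by $\eps$ with no $\nu^{-1/6}$ penalty. The total then is $\nu^{-1/2}\cdot\nu^{-1/3}\cdot\eps^3 = \nu^{-5/6}\eps^3$, as claimed. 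Replacing this by Lemma \ref{lem:more_enh_dissip_neq} (which does cost $\nu^{-1/6}$) is exactly what pushes your estimate to $\nu^{-1}\eps^3$. The other class-(c) terms \eqref{eq:s0_pressure_bound_1} and \eqref{eq:s0_pressure_bound_2} do not need this refinement (they admit the cruder $\nu^{-2/3}\eps^3$ bound via \eqref{eq:reconstruction_m} without any $\nu^{-1/3}$ loss), but the cross term $U^{1,3}_{\neq}\cdot U^2_{\neq}$ genuinely requires $\cM_2$. You should therefore isolate this term, replace the application of Lemma \ref{lem:more_enh_dissip_neq} with \eqref{eq:extratimedecay} and the direct bootstrap bound on $\sqrt{-\dot\cM/\cM}\cA Q_{\neq}$, and redo the exponent count.

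A secondary observation: for class (a) the paper does not need dispersion at all -- it places $\widetilde{Q}_0$ in $L^\infty_tH^N$ and uses the smallness $\norm{\de_y\overline{U}^3_0}_{L^2_tH^N}\lesssim \nu^{-1/2}(\nu^{-1/2}\eps+\alpha^{-1/3}\nu^{-5/3}\eps^2)\eps$ from Proposition \ref{prop:doublezero_estimates} directly. Your dispersive treatment of class (a) can be made to work (the constant-in-$t$ piece paired with the $L^2_t$ bound of $\de_y\overline{U}^3_0$ closes after absorbing a higher-order remainder), but it is more circuitous; and note that Proposition \ref{prop:doublezero_estimates} only controls $\de_y\overline{U}^3_0$ in $L^2_tH^N$, not in $L^\infty_tH^N$, so be careful with the time-norm bookkeeping there.
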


\begin{proof}
To establish this lemma, it suffices to prove the bounds
\begin{align}
    \int_0^T \left|\l\de_y \widetilde{Q}_0, (\de_{i}U^{j}_{\neq}\de_{j}U^{i}_{\neq})_0\r_{H^N} \right|&\lesssim (\nu^{-\frac23}\eps)\eps^2, \qquad i,j\in\{1,3\},\label{eq:s0_pressure_bound_1}\\
    \int_0^T \left|\l \de_y\widetilde{Q}_0, (\de_y^LU^2_{\neq}\de_y^LU^2_{\neq})_0\r_{H^N} \right|& \lesssim (\nu^{-\frac23}\eps)\eps^2,\label{eq:s0_pressure_bound_2}\\
    \int_0^T \left|\l \de_y \widetilde{Q}_0, (\de_y^LU^{j}_{\neq}\de_{j}U^2_{\neq})_0\r_{H^N} \right|&\lesssim (\nu^{-\frac56}\eps)\eps^2,\qquad j\in\{1,3\},\label{eq:s0_pressure_bound_3}\\
    \int_0^T \left|\l \de_y \widetilde{Q}_0,(\de_{i}\widetilde{U}^{j}_{0}\de_{j}\widetilde{U}^{i}_{0})\r_{H^N} \right|&\lesssim (\alpha^{-\frac13}\nu^{-\frac23}\eps +\alpha^{-\frac13}\nu^{-\frac53}\eps^2)\eps^2,\qquad i,j\in\{2,3\},\label{eq:s0_pressure_bound_4}\\
    \int_0^T \left|\l\de_y \widetilde{Q}_0,(\de_y\overline{U}^{3}_{0}\de_z\widetilde{U}^2_{0})\r_{H^N} \right|&\lesssim (\nu^{-\frac32}\eps^2 +\alpha^{-\frac13}\nu^{-\frac83}\eps^3)\eps^2,\label{eq:s0_pressure_bound_5}
\end{align}
since the remaining terms in \eqref{eq:s0_pressure_bound} vanish as a consequence of $\de_xF_0=0$ and $\de_z\overline{F}_0=0$.

Using \eqref{eq:reconstruction_m} in Lemma \ref{lemma:additional_estim_U} and the bootstrap assumptions, for $i,j\in\{1,3\}$, we obtain \eqref{eq:s0_pressure_bound_1} as
\begin{align}
    \int_0^T \left|\l \de_y \widetilde{Q}_0,(\de_{i}U^{j}_{\neq}\de_{j}U^{i}_{\neq})_0\r_{H^N} \right|
    &\lesssim \int_0^T \norm{\de_y\widetilde{Q}_0}_{H^N} \norm{\de_{i}U^{j}_{\neq}\de_{j}U^{i}_{\neq}}_{H^N}  \\
    &\lesssim \int_0^T \norm{\nabla \widetilde{Q}_0}_{H^N} \norm{\de_{i}U^{j}_{\neq}}_{H^N}\norm{\de_{j}U^{i}_{\neq}}_{H^N}  \\
    &\lesssim \int_0^T \norm{\nabla \widetilde{Q}_0}_{H^N} \norm{\cA(Q,W)_{\neq}}_{H^N}^2  \\
    &\lesssim \norm{\nabla \widetilde{Q}_0}_{L^2_tH^N}\norm{\cA(Q,W)_{\neq}}_{L^2_tH^N}\norm{\cA(Q,W)_{\neq}}_{L^\infty_t H^N}\\
    &\lesssim(\nu^{-\frac12}\eps)(\nu^{-\frac16}\eps)\eps\\
    &\lesssim(\nu^{-\frac23}\eps)\eps^2.
\end{align}
Similarly, from \eqref{eq:absorption_nablaL} in Lemma \ref{lemma:additional_estim_U} the bound \eqref{eq:s0_pressure_bound_2} can be deduced for the interaction $i=j=2$, i.e.\
\begin{align}
    \int_0^T \left|\l \de_y \widetilde{Q}_0, (\de_y^LU^2_{\neq}\de_y^LU^2_{\neq})_0\r_{H^N} \right|&\lesssim \norm{\nabla \widetilde{Q}_0}_{L^2_tH^N}\norm{\cA Q_{\neq}}_{L^2_tH^N}\norm{\cA Q_{\neq}}_{L^\infty_t H^N}\lesssim (\nu^{-\frac23}\eps)\eps^2.
\end{align}
For the remaining nonzero modes interactions \eqref{eq:s0_pressure_bound_3} with $i=2$ and $j\in\{1,3\}$, we use \eqref{eq:reconstruction_m} and \eqref{eq:extratimedecay}, before concluding via the bootstrap assumptions that
\begin{align}
    \int_0^T \left|\l \de_y \widetilde{Q}_0,(\de_y^LU^{j}_{\neq}\de_{j}U^2_{\neq})_0\r_{H^N} \right|
    &\lesssim \norm{\nabla \widetilde{Q}_0}_{L^2_tH^N}\nu^{-\frac13}\norm{\cA(Q,W)_{\neq}}_{L^\infty_t H^N}\norm{\sqrt{-\frac{\dot \cM}{\cM}}\cA Q_{\neq}}_{L^2_tH^N}\\
    &\lesssim (\nu^{-\frac56}\eps)\eps^2.
\end{align}
For the next terms in \eqref{eq:s0_pressure_bound_4} -- involving simple and double zero interactions -- enhanced dissipation is no longer available and instead we rely on dispersion: Thanks to Proposition \ref{prop:nonlinear_dispersive_estimates} and \eqref{eq:simple0_conversion}, we have for $i,j\in\{2,3\}$ that
\begin{align}
    \int_0^T\left| \l \de_y \widetilde{Q}_0,(\de_{i}\widetilde{U}^{j}_{0}\de_{j}\widetilde{U}^{i}_{0})\r_{H^N} \right|&\lesssim \int_0^T \norm{\nabla \widetilde{Q}_0}_{H^N}\norm{\widetilde{Q}_0}_{H^N}\norm{\widetilde{Q}_0}_{L^\infty}  \\
    &\lesssim \int_0^T \norm{\nabla \widetilde{Q}_0}_{H^N}\norm{\widetilde{Q}_0}_{H^N}(\alpha^{-\frac13}t^{-\frac13}\e^{-\nu t}\eps +\alpha^{-\frac13}\nu^{-\frac23}\eps^2)\dd t  \\
    &\lesssim \norm{\nabla \widetilde{Q}_0}_{L^2_tH^N}\norm{\widetilde{Q}_0}_{L^\infty_t H^N}(\alpha^{-\frac13}\nu^{-\frac16}\eps) +\norm{\nabla \widetilde{Q}_0}_{L^2_tH^N}^2(\alpha^{-\frac13}\nu^{-\frac23}\eps^2) \\
    &\lesssim (\nu^{-\frac12}\eps)\eps (\alpha^{-\frac13}\nu^{-\frac16}\eps) + (\nu^{-1}\eps^2) (\alpha^{-\frac13}\nu^{-\frac23}\eps^2)\\
    &\lesssim(\alpha^{-\frac13}\nu^{-\frac23}\eps +\alpha^{-\frac13}\nu^{-\frac53}\eps^2)\eps^2,
\end{align}
and for the final term in \eqref{eq:s0_pressure_bound_5} we use Proposition \ref{prop:doublezero_estimates} and \eqref{eq:simple0_conversion} to get
\begin{align}
\int_0^T\left|  \l \de_y\widetilde{Q}_0,(\de_y\overline{U}^{3}_{0}\de_z\widetilde{U}^2_{0})\r_{H^N} \right|&\lesssim \int_0^T \norm{\nabla \widetilde{Q}_0}_{H^N} \norm{\de_y \overline{U}^3_0}_{H^N}\norm{\widetilde{Q}_0}_{H^N} ,\\
&\lesssim \norm{\nabla \widetilde{Q}_0}_{L^2_tH^N} \norm{\de_y \overline{U}^3_0}_{L^2_tH^N}\norm{\widetilde{Q}_0}_{L^\infty_t H^N}\\
&\lesssim (\nu^{-\frac12}\eps) \left(\nu^{-\frac12}(\nu^{-\frac12}\eps+\alpha^{-\frac13}\nu^{-\frac53}\eps^2)\eps\right)\eps\\
&\lesssim (\nu^{-\frac32}\eps^2 +\alpha^{-\frac13}\nu^{-\frac83}\eps^3)\eps^2.
\end{align}
This concludes the proof.
\end{proof}

We proceed with the first of the transport nonlinearities appearing in \eqref{eq:s0_energy}.

\begin{lemma}\label{lem:s0_Qtransport}
Under the assumptions of Theorem \ref{thm:bootstrap} we have the bound 
    \begin{equation}\label{eq:s0_Qtransport_bd}
      \int_0^T\abs{\l \widetilde{Q}_0,\Delta\widetilde{\mathcal{T}}_0(U,U^2)\r_{H^N} } = \int_0^T\abs{\l \nabla \widetilde{Q}_0,\nabla(U\cdot\nabla_L U^2)_0\r_{H^N} }\lesssim (\nu^{-\frac56}\eps+\alpha^{-\frac13}\nu^{-\frac83}\eps^3)\eps^2.
    \end{equation}
\end{lemma}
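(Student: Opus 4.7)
The plan is to follow the structure of the proof of Lemma \ref{lem:s0_pressure} almost verbatim, treating the transport term $\Delta\mathcal{T}(U,U^2) = \Delta(U\cdot\nabla_L U^2)$ in a manner parallel to the pressure term: after integrating by parts to redistribute one $\nabla$ onto $\widetilde{Q}_0$, I would split $(U\cdot\nabla_L U^2)_0$ into its non-vanishing mode interactions and estimate each separately. Since $\overline{U}^2_0=0$ by incompressibility and integrability, and $\de_x$-containing contributions reduce to purely nonzero-nonzero pieces,
\begin{equation}
\begin{aligned}
(U\cdot\nabla_L U^2)_0 &= (U^1_{\neq}\de_x U^2_{\neq})_0 + (U^2_{\neq}\de_y^L U^2_{\neq})_0 + (U^3_{\neq}\de_z U^2_{\neq})_0 \\
&\qquad + \widetilde{U}^2_0\, \de_y\widetilde{U}^2_0 + \widetilde{U}^3_0\, \de_z\widetilde{U}^2_0 + \overline{U}^3_0\, \de_z \widetilde{U}^2_0.
\end{aligned}
\end{equation}

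For the three nonzero-nonzero pieces, a transport derivative always falls on $U^2_{\neq}$, so I would invoke \eqref{eq:extratimedecay} (combined with \eqref{eq:lowerbound-m}) to express $\de_x U^2_{\neq}$ and $\de_z U^2_{\neq}$ through $\sqrt{-\frac{\dot\cM}{\cM}}\cA Q_{\neq}$, and to express $\de_y^L U^2_{\neq}$ through $\cA Q_{\neq}$. Combined with \eqref{eq:reconstruction_m} for the other factor, the Sobolev algebra in $H^N$ ($N>3$), and the bootstrap bounds \eqref{eq:btstrap_Q_neq}--\eqref{eq:btstrap_W_neq}, a time Cauchy--Schwarz will produce a contribution of order $\nu^{-5/6}\eps\cdot\eps^2$, exactly as in \eqref{eq:s0_pressure_bound_3}.

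For the two simple-zero self-interactions $\widetilde{U}^2_0\de_y\widetilde{U}^2_0$ and $\widetilde{U}^3_0\de_z\widetilde{U}^2_0$, I would place $L^\infty$ on one factor via Proposition \ref{prop:nonlinear_dispersive_estimates} and control the other factor in $H^N$ by converting it via \eqref{eq:simple0_conversion} to $\widetilde{Q}_0$, to which the bootstrap bound \eqref{eq:s0-btstrap} applies. Integrating $t^{-1/3}\e^{-\nu t}$ in time then produces a contribution of order $(\alpha^{-1/3}\nu^{-2/3}\eps + \alpha^{-1/3}\nu^{-5/3}\eps^2)\eps^2$, mirroring \eqref{eq:s0_pressure_bound_4}. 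The largest term is the mixed simple-double zero interaction $\overline{U}^3_0\de_z\widetilde{U}^2_0$: here I would apply Proposition \ref{prop:doublezero_estimates} to $\overline{U}^3_0$ and again \eqref{eq:simple0_conversion} to transfer $\widetilde{U}^2_0$ back to $\widetilde{Q}_0$, so that Cauchy--Schwarz in time together with the bootstrap bounds delivers the $\alpha^{-1/3}\nu^{-8/3}\eps^3\cdot\eps^2$ contribution, in analogy with \eqref{eq:s0_pressure_bound_5}.

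The main technical obstacle, shared with Lemma \ref{lem:s0_pressure}, is the bookkeeping of derivatives and Fourier multipliers: to avoid paying an additional $\nu^{-1/3}$ in the nonzero-nonzero interactions one must arrange for the extra time-decay multiplier $\sqrt{-\frac{\dot\cM}{\cM}}$ from \eqref{eq:extratimedecay} to act on the factors involving $U^2_{\neq}$, while respecting the product bound of Lemma \ref{lem:m-product} whenever an interaction with a zero mode is present. Once this derivative routing is pinned down, summation of the six contributions yields the claimed bound $(\nu^{-\frac56}\eps+\alpha^{-\frac13}\nu^{-\frac83}\eps^3)\eps^2$.
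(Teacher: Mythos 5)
Your proposal is correct and follows essentially the same route as the paper: the decomposition of $(U\cdot\nabla_L U^2)_0$ into nonzero--nonzero, simple--simple, and double--simple pieces is identical, and each piece is handled with the same tools (Propositions \ref{prop:nonlinear_dispersive_estimates} and \ref{prop:doublezero_estimates}, Lemma \ref{lemma:additional_estim_U}, and time Cauchy--Schwarz). The one imprecision is in the nonzero--nonzero estimate: after integrating the outer $\nabla$ by parts one must track both paraproduct halves, $(\nabla_L U^j_{\neq}\,\de_j U^2_{\neq})_0$ and $(U^j_{\neq}\,\nabla_L\de_j U^2_{\neq})_0$, and it is only when both derivatives fall on $U^2_{\neq}$ that \eqref{eq:reconstruction_m} applies (giving $\nu^{-2/3}\eps^3$), while the half where one derivative falls on $U^j_{\neq}$ requires \eqref{eq:absorption_nablaL} together with \eqref{eq:extratimedecay} (giving $\nu^{-5/6}\eps^3$) -- you correctly flag this derivative routing as the bookkeeping obstacle, and it is resolved exactly as you anticipate.
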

\begin{proof}
To establish \eqref{eq:s0_Qtransport_bd}, it suffices to prove the bounds
    \begin{align}
            \int_0^T\left| \l \nabla \widetilde{Q}_0,\nabla(U_{\neq}\cdot \nabla_LU^2_{\neq})_0\r_{H^N} \right|&\lesssim(\nu^{-\frac56}\eps)\eps^2+(\nu^{-\frac23}\eps)\eps^2,\label{eq:Ts0nn}\\
            \int_0^T\left| \l \nabla\widetilde{Q}_0,\nabla(\widetilde{U}^{j}_{0}\de_{j} \widetilde{U}^2_{0})\r_{H^N} \right|&\lesssim (\alpha^{-\frac13}\nu^{-\frac23}\eps +\alpha^{-\frac13}\nu^{-\frac53}\eps^2)\eps^2,\qquad j\in\{2,3\},\label{eq:Ts0s0s0}\\
            \int_0^T\left| \l\nabla \widetilde{Q}_0,\nabla(\overline{U}^{j}_{0}\de_{j} \widetilde{U}^2_{0})\r_{H^N} \right|&\lesssim (\nu^{-\frac32}\eps^2 +\alpha^{-\frac13}\nu^{-\frac83}\eps^3)\eps^2, \qquad j\in\{1,3\}.\label{eq:Ts000s0}
        \end{align}     
 For \eqref{eq:Ts0nn}, we use that
 \begin{equation}
     \nabla(U^j_{\neq}\de_j^L U^2_{\neq})_0=\nabla_L(U^j_{\neq}\de_j^L U^2_{\neq})_0=(\nabla_LU^j_{\neq}\de_j^L U^2_{\neq})_0+(U^j_{\neq}\nabla_L\de_j^L U^2_{\neq})_0,\qquad 1\leq j\leq 3.
 \end{equation}
 For $j\in\{1,3\}$ we have by \eqref{eq:absorption_nablaL} and \eqref{eq:extratimedecay} that
 \begin{equation}
  \norm{(\nabla_LU^j_{\neq}\de_j U^2_{\neq})_0}_{H^N}\lesssim \norm{\nabla_LU^j_{\neq}}_{H^N}\norm{\de_j U^2_{\neq}}_{H^N}\lesssim \nu^{-\frac13}\norm{\cA(Q,W)_{\neq}}_{H^N}\norm{\sqrt{-\frac{\dot \cM}{\cM}}\cA Q_{\neq}}_{H^N}
 \end{equation}
 and
 \begin{equation}
    \norm{(U^j_{\neq}\nabla_L\de_j U^2_{\neq})_0}_{H^N}\lesssim \norm{U^j_{\neq}}_{H^N}\norm{\nabla_L\de_j U^2_{\neq}}_{H^N}\lesssim\norm{\cA(Q,W)_{\neq}}_{H^N}\norm{\cA Q_{\neq}}_{H^N},
 \end{equation}
 and similarly for $j=2$ there holds that
 \begin{equation}
 \begin{aligned}
   \norm{\nabla_L(U^2_{\neq}\de_y^L U^2_{\neq})_0}_{H^N}&\lesssim  \norm{\nabla_LU^2_{\neq}}_{H^N}^2+\norm{U^2_{\neq}}_{H^N}\norm{\abs{\nabla_L}^2U^2_{\neq}}_{H^N}\\
   &\lesssim \norm{\cA 
 Q_{\neq}}_{H^N}^2 + \nu^{-\frac13}\norm{\cA 
 Q_{\neq}}_{H^N}\norm{\sqrt{-\frac{\dot \cM}{\cM}}\cA Q_{\neq}}_{H^N},
 \end{aligned}  
 \end{equation}
 so that altogether
 \begin{equation}
 \begin{aligned}
 \int_0^T\left| \l \nabla \widetilde{Q}_0,\nabla(U_{\neq}\cdot\nabla_L U^2_{\neq})_0\r_{H^N} \right|&\lesssim \norm{\nabla \widetilde{Q}_0}_{ L^2_tH^N}\nu^{-\frac13}\norm{\cA(Q,W)_{\neq}}_{L^\infty_t H^N}\norm{\sqrt{-\frac{\dot \cM}{\cM}}\cA Q_{\neq}}_{ L^2_tH^N}\\
    &\quad + \norm{\nabla \widetilde{Q}_0}_{ L^2_tH^N}\norm{\cA(Q,W)_{\neq}}_{L^\infty_t H^N}\norm{\cA Q_{\neq}}_{ L^2_tH^N}\\
    &\lesssim (\nu^{-\frac56}\eps)\eps^2+(\nu^{-\frac23}\eps)\eps^2.
 \end{aligned}
 \end{equation}
 For the interactions in \eqref{eq:Ts0s0s0} between simple zero modes we will use the dispersive decay estimates from Proposition \ref{prop:nonlinear_dispersive_estimates}: these imply that
\begin{align}
 \sum_{j\in\{2,3\}}\norm{\nabla (\widetilde{U}^{j}_{0}\de_{j} \widetilde{U}^2_{0})}_{H^N} &\lesssim \sum_{j\in\{2,3\}}\norm{\nabla \widetilde{U}^{j}_{0}}_{L^\infty}\norm{\de_{j} \widetilde{U}^2_{0}}_{H^N} + \norm{\nabla \widetilde{U}^{j}_{0}}_{H^N}\norm{\de_{j} \widetilde{U}^2_{0}}_{L^\infty} \\
 &\quad\qquad\qquad +\norm{\widetilde{U}^{j}_{0}}_{L^\infty}\norm{\nabla \de_{j} \widetilde{U}^2_{0}}_{H^N}+\norm{\widetilde{U}^{j}_{0}}_{H^N}\norm{\nabla \de_{j} \widetilde{U}^2_{0}}_{L^\infty}\\
 &\lesssim (\alpha^{-\frac13}t^{-\frac13}\e^{-\nu t}\eps +\alpha^{-\frac13}\nu^{-\frac23}\eps^2)\norm{\widetilde{Q}_0}_{H^N},
\end{align}  
and thus 
\begin{align}
    \int_0^T\left| \l \nabla \widetilde{Q}_0,\nabla (\widetilde{U}^{i}_{0}\de_{i} \widetilde{U}^2_{0})\r_{H^N} \right|&\lesssim \int_0^T \norm{\nabla \widetilde{Q}_0}_{H^N}\norm{\widetilde{Q}_0}_{H^N}(\alpha^{-\frac13}t^{-\frac13}\e^{-\nu t}\eps +\alpha^{-\frac13}\nu^{-\frac23}\eps^2)\dd t  \\
    &\lesssim(\alpha^{-\frac13}\nu^{-\frac23}\eps +\alpha^{-\frac13}\nu^{-\frac53}\eps^2)\eps^2.
\end{align}

Finally, for the terms \eqref{eq:Ts000s0} involving double zero interactions, we apply the bounds from Proposition \ref{prop:doublezero_estimates}, together with \eqref{eq:simple0_conversion}, to deduce that when $j\in\{1,3\}$
\begin{align}
    \int_0^T\left| \l \nabla\widetilde{Q}_0,\nabla(\overline{U}^{j}_{0}\de_{j} \widetilde{U}^2_{0})\r_{H^N} \right|&\lesssim \int_0^T \norm{\nabla \widetilde{Q}_0}_{H^N} \left(\norm{ \overline{U}^{j}_0}_{H^N}+\norm{\de_y \overline{U}^{j}_0}_{H^N}\right)\norm{\widetilde{Q}_0}_{H^N} \\
    &\lesssim \norm{\nabla \widetilde{Q}_0}_{ L^2_tH^N} \left(\norm{ \overline{U}^{j}_0}_{L^\infty_t H^N}\norm{\nabla \widetilde{Q}_0}_{L^2 H^N}+\norm{\de_y \overline{U}^{j}_0}_{ L^2_tH^N}\norm{ \widetilde{Q}_0}_{L^\infty_t H^N}\right)\\
    &\lesssim (\nu^{-\frac32}\eps^2 +\alpha^{-\frac13}\nu^{-\frac83}\eps^3)\eps^2.
\end{align}
The proof is over.
\end{proof}

Lastly, we conclude the estimates with the second transport nonlinearity in \eqref{eq:s0_energy}.

\begin{lemma}\label{lem:s0_Wtransport}
Under the assumptions of Theorem \ref{thm:bootstrap} we have that
\begin{equation}\label{eq:s0_Wtransport_bd}
    \int_0^T\abs{\l \widetilde{W}_0, \de_z|\nabla|\widetilde{\mathcal{T}}_0(U,U^1) \r_{H^N} } =\int_0^T\abs{\l |\nabla|\widetilde{W}_0, \de_z(U\cdot\nabla_LU^1)_0 \r_{H^N} }\lesssim (\nu^{-\frac56}\eps+\alpha^{-\frac13}\nu^{-\frac83}\eps^3)\eps^2.
    \end{equation}
\end{lemma}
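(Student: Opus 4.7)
The plan is to mirror closely the scheme of Lemma \ref{lem:s0_Qtransport}, now with $\widetilde{W}_0$ in place of $\widetilde{Q}_0$. After a Cauchy--Schwarz in $L^2_t H^N$, the factor $\norm{|\nabla|\widetilde{W}_0}_{L^2_tH^N}\lesssim \norm{\nabla\widetilde{W}_0}_{L^2_tH^N}\lesssim \nu^{-\frac12}\eps$ is absorbed using the dissipative control of the bootstrap \eqref{eq:s0-btstrap}, so the task reduces to bounding $\norm{\de_z(U\cdot\nabla_LU^1)_0}_{L^2_tH^N}$. I would first perform a combinatorial reduction: writing $U=\overline U_0+\widetilde U_0+U_{\neq}$ and using $\de_xF_0=0$, $\de_z\overline F_0=0$, $\overline U^2_0=0$, together with the observation that any product of an $x$-independent factor with an $x$-mean-zero factor has vanishing simple zero projection, the nonlinearity splits into three families:
\begin{equation}
  (U\cdot\nabla_LU^1)_0 = \underbrace{[U_{\neq}\cdot\nabla_LU^1_{\neq}]_0}_{\text{(a)}} + \underbrace{[\widetilde U^2_0\de_y\widetilde U^1_0]_0 + [\widetilde U^3_0\de_z\widetilde U^1_0]_0}_{\text{(b)}} + \underbrace{\widetilde U^2_0\de_y\overline U^1_0 + \overline U^3_0\de_z\widetilde U^1_0}_{\text{(c)}}.
\end{equation}

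For (a), distributing the outer $\de_z$ across each product and bounding in $H^N$ via the Sobolev algebra, I would apply Lemma \ref{lemma:additional_estim_U} to absorb the interior $|\nabla_L|$-derivatives: whenever $U^2_{\neq}$ appears without a derivative it is controlled via \eqref{eq:extratimedecay} with the $\sqrt{-\dot\cM/\cM}$ weight, while $U^{1,3}_{\neq}$ and derivatives of $U^2_{\neq}$ are handled via \eqref{eq:reconstruction_m} and \eqref{eq:absorption_nablaL} (at the cost of a $\nu^{-\frac13}$ factor for each extra $|\nabla_L|$). Combined with the bootstrap bounds \eqref{eq:btstrap_Q_neq}--\eqref{eq:btstrap_W_neq} and Lemma \ref{lem:more_enh_dissip_neq}, this yields a threshold $(\nu^{-\frac56}\eps+\nu^{-\frac23}\eps)\eps^2$, exactly as in \eqref{eq:Ts0nn}. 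For (b), I would place the dispersive $W^{2,\infty}$ amplitude bound of Proposition \ref{prop:nonlinear_dispersive_estimates} on the factor carrying up to two derivatives of $\widetilde U_0$ and control the other factor in $H^N$ via \eqref{eq:simple0_conversion}; the time integral $\int_0^T t^{-\frac13}\e^{-\nu t}\,\dd t\lesssim \nu^{-\frac23}$ together with $\nu^{\frac12}\norm{\nabla(\widetilde Q_0,\widetilde W_0)}_{L^2_tH^N}\lesssim \eps$ then produce $(\alpha^{-\frac13}\nu^{-\frac23}\eps+\alpha^{-\frac13}\nu^{-\frac53}\eps^2)\eps^2$, paralleling \eqref{eq:Ts0s0s0}. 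For (c), Proposition \ref{prop:doublezero_estimates} supplies the size of $\overline U^{1,3}_0$ in $L^\infty_tH^N$ and of $\de_y\overline U^{1,3}_0$ in $L^2_tH^N$, and combining with \eqref{eq:simple0_conversion} on the simple-zero factor yields $(\nu^{-\frac32}\eps^2+\alpha^{-\frac13}\nu^{-\frac83}\eps^3)\eps^2$, exactly as in \eqref{eq:Ts000s0}.

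The main obstacle is careful bookkeeping: verifying that products which at first glance could contribute to the simple-zero projection -- such as $\overline U^{1}_0\de_xU^1_{\neq}$, $\overline U^3_0\de_zU^1_{\neq}$, or any $\widetilde U^j_0$ paired with $\de_y^L U^1_{\neq}$ or $\de_z U^1_{\neq}$ -- in fact have zero $x$-mean and hence drop out. Once this reduction is carried out, each of the three families (a)--(c) is handled by tools already developed in the proofs of Lemmas \ref{lem:s0_pressure} and \ref{lem:s0_Qtransport}, and the three contributions sum to the claimed bound $\lesssim (\nu^{-\frac56}\eps + \alpha^{-\frac13}\nu^{-\frac83}\eps^3)\eps^2$, completing the last piece needed to conclude Proposition \ref{prop:simple0_finalbound}.
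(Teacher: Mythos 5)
Your proposal follows essentially the same route as the paper: identical decomposition of $(U\cdot\nabla_L U^1)_0$ into nonzero--nonzero, simple-zero--simple-zero, and double-zero--simple-zero families (in the paper these appear as the sub-bounds \eqref{eq:s0_Wtransport_bd1}--\eqref{eq:s0_Wtransport_bd5}), the same combinatorial elimination of terms with vanishing $x$-mean, and the same triad of tools -- Lemma \ref{lemma:additional_estim_U} with the $\sqrt{-\dot\cM/\cM}$ weight for the nonzero--nonzero family, the dispersive amplitude bound of Proposition \ref{prop:nonlinear_dispersive_estimates} for simple-zero self-interactions, and Proposition \ref{prop:doublezero_estimates} for the double-zero contributions. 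The resulting sub-thresholds $(\nu^{-\frac56}\eps + \nu^{-\frac23}\eps)\eps^2$, $(\alpha^{-\frac13}\nu^{-\frac23}\eps + \alpha^{-\frac13}\nu^{-\frac53}\eps^2)\eps^2$, and $(\nu^{-\frac32}\eps^2 + \alpha^{-\frac13}\nu^{-\frac83}\eps^3)\eps^2$ match the paper's, so the argument is correct and not a different approach.
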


\begin{proof}
We will show the following bounds, which imply \eqref{eq:s0_Wtransport_bd}:
\begin{align}
    \int_0^T\left| \l |\nabla|\widetilde{W}_0,\de_z(U^{j}_{\neq}\de_{j} U^1_{\neq})_0\r_{H^N} \right|&\lesssim (\nu^{-\frac23}\eps)\eps^2, \qquad j\in\{1,3\},\label{eq:s0_Wtransport_bd1}\\
    \int_0^T\left| \l|\nabla| \widetilde{W}_0,\de_z(U^2_{\neq}\de_y^L U^1_{\neq})_0\r_{H^N} \right|&\lesssim (\nu^{-\frac56}\eps)\eps^2,\label{eq:s0_Wtransport_bd2}\\
    \int_0^T\left| \l |\nabla|\widetilde{W}_0,\de_z(\widetilde{U}^{j}_{0}\de_{j} \widetilde{U}^1_{0})\r_{H^N} \right|&\lesssim (\alpha^{-\frac13}\nu^{-\frac23}\eps +\alpha^{-\frac13}\nu^{-\frac53}\eps^2)\eps^2,\qquad j\in\{2,3\},\label{eq:s0_Wtransport_bd3}\\
    \int_0^T\left| \l |\nabla|\widetilde{W}_0,\de_z(\overline{U}^3_{0}\de_z \widetilde{U}^1_{0})\r_{H^N} \right|&\lesssim (\nu^{-\frac32}\eps^2 +\alpha^{-\frac13}\nu^{-\frac83}\eps^3)\eps^2,\label{eq:s0_Wtransport_bd4}\\
    \int_0^T\left| \l |\nabla|\widetilde{W}_0,\de_z(\widetilde{U}^2_{0}\de_y \overline{U}^1_{0})\r_{H^N} \right|&\lesssim (\nu^{-\frac32}\eps^2 +\alpha^{-\frac13}\nu^{-\frac83}\eps^3)\eps^2.\label{eq:s0_Wtransport_bd5}
\end{align}
These estimates are proved similarly as those in the preceding two lemmas, and hence our proof briefly highlights the main inequalities.

Appealing to \eqref{eq:reconstruction_m} in Lemma \ref{lemma:additional_estim_U}, for \eqref{eq:s0_Wtransport_bd1} we have for $j\in\{1,3\}$ that
\begin{align}
    \int_0^T\left| \l |\nabla|\widetilde{W}_0,\de_z(U^{j}_{\neq}\de_{j} U^1_{\neq})_0\r_{H^N} \right|&\lesssim \norm{\nabla \widetilde{W}_0}_{L^2_tH^N}\norm{\cA (Q,W)_{\neq}}_{L^2_tH^N}\norm{\cA(Q,W)_{\neq}}_{L^\infty_t H^N}\\
    &\lesssim (\nu^{-\frac23}\eps)\eps^2,
\end{align}
and with \eqref{def:multi-M2} and \eqref{eq:absorption_nablaL} we also get \eqref{eq:s0_Wtransport_bd2} via
\begin{align}
    \int_0^T\left| \l|\nabla| \widetilde{W}_0,\de_z(U^2_{\neq}\de_y^L U^1_{\neq})_0\r_{H^N} \right|&\lesssim \norm{\nabla \widetilde{W}_0}_{L^2_tH^N}\norm{\sqrt{-\frac{\dot \cM}{\cM}}\cA Q_{\neq}}_{L^2_tH^N}\nu^{-\frac13}\norm{\cA(Q,W)_{\neq}}_{L^\infty_t H^N}\\
    &\lesssim (\nu^{-\frac56}\eps)\eps^2. 
\end{align}
To bound the interaction between simple zero modes in \eqref{eq:s0_Wtransport_bd3} we invoke Proposition \ref{prop:nonlinear_dispersive_estimates}: for $j\in\{2,3\}$ we obtain
\begin{align}
    \int_0^T\left| \l |\nabla|\widetilde{W}_0,\de_z(\widetilde{U}^{j}_{0}\de_{j} \widetilde{U}^1_{0})\r_{H^N} \right|&\lesssim \int _0^\infty \norm{\nabla \widetilde{W}_0}_{H^N}\norm{(\widetilde{Q}_0,\widetilde{W}_0)}_{H^N}(\alpha^{-\frac13}t^{-\frac13}\e^{-\nu t}\eps +\alpha^{-\frac13}\nu^{-\frac23}\eps^2) \dd t \\
    &\lesssim(\alpha^{-\frac13}\nu^{-\frac23}\eps +\alpha^{-\frac13}\nu^{-\frac53}\eps^2)\eps^2.
\end{align}
Finally, for the interactions with double zero modes in \eqref{eq:s0_Wtransport_bd4}, \eqref{eq:s0_Wtransport_bd5} we use Proposition \ref{prop:doublezero_estimates}. Consequently,
\begin{align}
    \int_0^T\left| \l |\nabla|\widetilde{W}_0,\de_z(\overline{U}^3_{0}\de_z \widetilde{U}^1_{0})\r_{H^N} \right|&\lesssim \int_0^T \norm{\nabla \widetilde{W}_0}_{H^N}\norm{ \overline{U}^{3}_0}_{H^N}\norm{\nabla \widetilde{W}_0}_{H^N} \\
    &\lesssim \norm{\nabla \widetilde{W}_0}^2_{L^2_tH^N}\norm{ \overline{U}^{3}_0}_{L^\infty_t H^N}\\
    &\lesssim (\nu^{-\frac32}\eps^2 +\alpha^{-\frac13}\nu^{-\frac83}\eps^3)\eps^2,
\end{align}
and lastly
\begin{align}
    \int_0^T\left| \l |\nabla|\widetilde{W}_0,\de_z(\widetilde{U}^2_{0}\de_y \overline{U}^1_{0})\r_{H^N} \right|&\lesssim \int_0^T \norm{\nabla \widetilde{W}_0}_{H^N} \norm{\de_y \overline{U}^{1}_0}_{H^N}\norm{\widetilde{Q}_0}_{H^N} \\
    &\lesssim \norm{\nabla \widetilde{W}_0}_{L^2_tH^N}\norm{\de_y \overline{U}^{1}_0}_{L^2_tH^N}\norm{\widetilde{Q}_0}_{L^\infty_t H^N}\\
    &\lesssim (\nu^{-\frac32}\eps^2 +\alpha^{-\frac13}\nu^{-\frac83}\eps^3)\eps^2.
\end{align}
The lemma is proved.
\end{proof}

\subsection{Nonzero modes}\label{sec:non-zero}
For the nonzero modes $Q_{\neq}, W_{\neq}$ we recall from \eqref{eq:nonlin} that they satisfy the equations
\begin{equation}\label{eq:nonlin_nonzero}
\begin{cases}
        \de_t Q_{\neq} +\alpha \de_z|\nabla_L|^{-1}W_{\neq} -\nu \Delta_L Q_{\neq} = -\de_y^L\Delta_L\mathcal{P}_{\neq}(U,U)-\Delta_L\mathcal{T}_{\neq}(U,U^2),\\
        \de_t W_{\neq}-\de_{x}\de_y^L\abs{\nabla_L}^{-2}W_{\neq} +\alpha \de_z|\nabla_L|^{-1}Q_{\neq}-\nu\Delta_L W_{\neq}=\sqrt{\frac{\beta}{\beta-1}}\abs{\nabla_L}\left(\de_z\mathcal{T}_{\neq}(U,U^1)-\de_x\cT_{\neq}(U,U^3)\right),
    \end{cases}
\end{equation}
Part \ref{it:non0-btstrap} of Theorem \ref{thm:bootstrap} then follows directly from the following proposition.  
\begin{proposition}\label{prop:estim_nonzero}
    There exists $C>0$ such that under the assumptions of Theorem \ref{thm:bootstrap} we have that
    \begin{equation}
        \sum_{F\in\{Q,W\}} \norm{\cA F_{\neq}}^2_{L^\infty_t H^N}+\nu\norm{\nabla_L\cA F_{\neq}}^2_{L^2_tH^N}+\norm{\sqrt{-\frac{\dot \cM}{ \cM}}\cA F_{\neq}}^2_{L^2_tH^N}\leq \eps^2+ C(\nu^{-\frac56}\eps+\alpha^{-\frac13}\nu^{-\frac83}\eps^3)\eps^2.
    \end{equation}
\end{proposition}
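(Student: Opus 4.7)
The plan is to run a joint $H^N$ energy estimate for the pair $(\cA Q_{\neq},\cA W_{\neq})$, mirroring the linear analysis of Section \ref{ssec:nonzero_linear} but with the full multiplier $\cA=\fm\cM\e^{\delta\nu^{1/3}t}$, and then control the nonlinear right-hand sides of \eqref{eq:nonlin_nonzero} using the bootstrap assumptions, the product bound \eqref{eq:A-product-bd}, and the refined estimates for the double and simple zero modes from Propositions \ref{prop:doublezero_estimates} and \ref{prop:simple0_finalbound}, together with the dispersive decay from Proposition \ref{prop:nonlinear_dispersive_estimates}.

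\textbf{Step 1 (linear part and time weight).} Applying $\cA$ to both equations in \eqref{eq:nonlin_nonzero}, pairing the $Q_{\neq}$-equation with $\cA Q_{\neq}$ and the $W_{\neq}$-equation with $\cA W_{\neq}$ in $H^N$, and summing, the skew-symmetric coupling $\alpha\de_z|\nabla_L|^{-1}$ cancels. The pointwise argument of \eqref{eq:nonzero_linear_energy} together with the definition of $\cM=\cM_1\cM_2$ produces
\begin{align}
\ddt\Bigl(\norm{\cA Q_{\neq}}_{H^N}^2+\norm{\cA W_{\neq}}_{H^N}^2\Bigr) &+\nu\norm{\nabla_L\cA(Q,W)_{\neq}}_{H^N}^2+\tfrac12\norm{\sqrt{-\tfrac{\dot\cM}{\cM}}\cA(Q,W)_{\neq}}_{H^N}^2 \notag \\
&\leq 2\delta\nu^{1/3}\norm{\cA(Q,W)_{\neq}}_{H^N}^2+\mathcal{R}(t),
\end{align}
where $\mathcal{R}(t)$ collects the nonlinear contributions. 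By \eqref{def:multi-M1}, the $\delta\nu^{1/3}$ term is absorbed into the dissipation and enhanced dissipation on the left for $\delta>0$ sufficiently small (exactly as in Lemma \ref{lem:more_enh_dissip_neq}). After integration in time and using \eqref{eq:initial_data}, it remains to show that $\int_0^T\mathcal{R}\lesssim (\nu^{-5/6}\eps+\alpha^{-1/3}\nu^{-8/3}\eps^3)\eps^2$.

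\textbf{Step 2 (nonlinear decomposition).} Every term in $\mathcal{R}$ is of shape $\l\cA F_{\neq},\cA\Xi(U,U^j)_{\neq}\r_{H^N}$ with $F\in\{Q,W\}$ and $\Xi$ a transport or pressure operator carrying up to two $\nabla_L$ derivatives. Splitting $U=\overline{U}_0+\widetilde{U}_0+U_{\neq}$ in each factor, the contributions group into nonzero-nonzero self-interactions; nonzero-simple zero; and nonzero-double zero. For the self-interactions we use Lemma \ref{lemma:additional_estim_U} to absorb one $\nabla_L$ via the dissipation $\nu^{1/2}\norm{\nabla_L\cA F_{\neq}}_{L^2_tH^N}$, the second $\nabla_L$ using the enhanced dissipation $\nu^{1/6}\norm{\cA F_{\neq}}_{L^2_tH^N}$ via \eqref{eq:absorption_nablaL}, and the extra damping \eqref{eq:extratimedecay} whenever $U^2_{\neq}$ appears. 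This bounds these self-interactions by a constant times $\nu^{-5/6}\eps\cdot\eps^2$, which is the dominant contribution and fixes the threshold.

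\textbf{Step 3 (interactions with zero modes and main obstacle).} The principal difficulty is handling nonzero $\times$ simple zero interactions, since $\cA$ does not commute nicely with multiplication by a zero-mode factor. This is resolved by Lemma \ref{lem:m-product}, which trades one extra derivative on the simple zero mode to let $\cA$ act entirely on the nonzero factor; the loss is affordable because the initial data sits in $H^{N+2}$ and because Proposition \ref{prop:simple0_finalbound} gives the right smallness for $\widetilde{Q}_0,\widetilde{W}_0$ and their derivatives in both $L^\infty_tH^N$ and $L^2_t H^{N+1}$. The resulting bounds are of the same order $\nu^{-5/6}\eps\cdot\eps^2$ as the self-interactions, plus the dispersive correction $\alpha^{-1/3}\nu^{-8/3}\eps^3\cdot\eps^2$ coming from the tails in Proposition \ref{prop:nonlinear_dispersive_estimates}. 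The nonzero $\times$ double zero contributions are controlled analogously via Lemma \ref{lem:m-product} and Proposition \ref{prop:doublezero_estimates}, yielding bounds of order $(\nu^{-3/2}\eps^2+\alpha^{-1/3}\nu^{-8/3}\eps^3)\eps^2$ which fit the target. Summing all contributions and using the smallness \eqref{eq:transition_thm} closes the estimate, giving the improvement from $100\eps^2$ to $50\eps^2$ in the bootstrap, which in turn justifies the stated $\eps^2+C(\nu^{-5/6}\eps+\alpha^{-1/3}\nu^{-8/3}\eps^3)\eps^2$ bound.
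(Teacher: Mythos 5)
Your proposal follows essentially the same strategy as the paper: a joint $H^N$ energy estimate for the pair $\cA(Q,W)_{\neq}$ (with the skew-symmetric coupling cancelling and the $\de_x\de_y^L\abs{\nabla_L}^{-2}W_{\neq}$ term absorbed as in the linear analysis), smallness of the $\delta\nu^{1/3}$ time-weight contribution, and term-by-term control of the nonlinear right-hand side organized by the mode type of each factor, using Lemma \ref{lem:m-product} to handle simple-zero factors and Propositions \ref{prop:doublezero_estimates}, \ref{prop:simple0_finalbound} and the additional-decay bounds of Lemma \ref{lemma:additional_estim_U}. One small misattribution worth noting: in the paper's proof the nonzero $\times$ simple-zero interactions use only the bootstrap bound $\norm{(\widetilde{Q}_0,\widetilde{W}_0)}_{L^\infty_tH^N}\lesssim\eps$ rather than the dispersive decay of Proposition \ref{prop:nonlinear_dispersive_estimates} directly, so the $\alpha^{-1/3}\nu^{-8/3}\eps^3$ factor enters this proposition exclusively through the double-zero interactions via Proposition \ref{prop:doublezero_estimates}.
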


\begin{proof}
We start with an energy estimate for the norm $\sum_{F\in\{Q,W\}} \norm{\cA F_{\neq}}^2_{H^N}$ of solutions to the system \eqref{eq:nonlin_nonzero} and obtain
\begin{equation}\label{eq:nonzero_energy}
    \begin{aligned}
        &\frac12 \left( \norm{\cA Q_{\neq}(t)}^2_{H^N}+\norm{\cA W_{\neq}(t)}^2_{H^N}\right) +\norm{\sqrt{-\frac{\dot \cM}{\cM}}\cA Q_{\neq}}^2_{L^2_tH^N}+\norm{\sqrt{-\frac{\dot \cM}{\cM}}\cA W_{\neq}}^2_{L^2_tH^N} \\
        &\quad + \norm{\sqrt{-\frac{\dot \fm}{\fm}}\cA Q_{\neq}}^2_{L^2_tH^N}+\norm{\sqrt{-\frac{\dot \fm}{\fm}}\cA W_{\neq}}^2_{L^2_tH^N}+ \nu \norm{\nabla_L\cA Q_{\neq}}^2_{L^2_tH^N}+\nu\norm{\nabla_L\cA W_{\neq}}^2_{L^2_tH^N}\\
        &= \frac12 \left( \norm{\cA Q_{\neq}(0)}^2_{H^N}+\norm{\cA W_{\neq}(0)}^2_{H^N}\right) \\
        &\quad + \delta\nu^\frac13\left( \norm{\cA Q_{\neq}(t)}^2_{L^2_tH^N}+\norm{\cA W_{\neq}(t)}^2_{L^2_tH^N}\right)+ \int_0^t \l \cA W_{\neq}, \cA\de_{x}\de_y^L\abs{\nabla_L}^{-2}W_{\neq}\r_{H^N} \\
        &\quad +\int_0^t N_{E}  ,
    \end{aligned}
\end{equation}
where
\begin{equation}\label{eq:def_NE}
\begin{aligned}
        N_{E}&=- \l \cA Q_{\neq},\cA \de_y^L\Delta_L\cP_{\neq}(U,U)\r_{H^N} -\l \cA Q_{\neq},\cA\Delta_L\cT_{\neq}(U,U^2)\r_{H^N}\\
        &\qquad +\sqrt{\frac{\beta}{\beta-1}}\left(\l \cA W_{\neq},\cA \abs{\nabla_L}\de_z\mathcal{T}_{\neq}(U,U^1)\r_{H^N}-\l \cA W_{\neq},\cA \abs{\nabla_L}\de_x\cT_{\neq}(U,U^3)\r_{H^N}\right).
\end{aligned}
\end{equation}
By assumption \eqref{eq:initial_data} we have that 
\begin{equation}
  \frac12 \left( \norm{\cA Q_{\neq}(0)}^2_{H^N}+\norm{\cA W_{\neq}(0)}^2_{H^N}\right)\leq \eps^2,  
\end{equation}
and by Lemma \ref{lem:more_enh_dissip_neq} we can choose $\delta>0$ so small that
\begin{equation}\label{eq:delta-choice}
    \delta\nu^\frac13\left( \norm{\cA Q_{\neq}(t)}^2_{L^2_tH^N}+\norm{\cA W_{\neq}(t)}^2_{L^2_tH^N}\right)\leq \eps^2.
\end{equation}
Moreover, arguing as in the linear analysis in \eqref{eq:nonzero_linear_energy} in Section \ref{ssec:nonzero_linear}, we have that
\begin{equation}
    \abs{\l \cA W_{\neq}, \cA\de_{x}\de_y^L\abs{\nabla_L}^{-2}W_{\neq}\r_{H^N}}\leq\frac{\nu}{2}\norm{\nabla_L\cA W_{\neq}}_{H^N}^2+\norm{\sqrt{-\frac{\dot \fm}{\fm}}\cA W_{\neq}}_{H^N}^2.
\end{equation}
To prove the claim it thus suffices to show that
\begin{equation}
 \int_0^T \abs{N_E}\lesssim (\nu^{-\frac56}\eps+\alpha^{-\frac13}\nu^{-\frac83}\eps^3)\eps^2.
\end{equation}
This is done separately for the four terms in $N_E$, see \eqref{eq:def_NE}, in Lemmas \ref{lem:neq_pressure}--\ref{lem:neq_Wtransport} below.
\end{proof}

The proof of this and the following lemmas expands on arguments as already encountered in Section \ref{sec:simple_zero}, with heavy reliance on the structure and bounds for our multipliers. In particular, we will make use of the product estimate \eqref{eq:A-product-bd} in Lemma \ref{lem:m-product} for $\fm$ resp.\ $\cA$. Moreover, we will use \eqref{eq:A-Sobbd} repeatedly and crucially the bounds for $U$ in terms of $(Q,W)$ established in Lemma \ref{lemma:additional_estim_U}. 
\begin{lemma}\label{lem:neq_pressure}
Under the assumptions of Theorem \ref{thm:bootstrap} we have the bound
\begin{equation}
\begin{aligned}
   \int_0^T\abs{\l \cA Q_{\neq},\cA \de_y^L\Delta_L\cP_{\neq}(U,U)\r_{H^N}} & = \int_0^T \abs{\sum_{1\leq i,j\leq 3} \l \cA\de_y^LQ_{\neq},\cA (\de_i^LU^j\de_j^LU^i)_{\neq}\r_{H^N}}\\
   &\lesssim (\nu^{-\frac56}\eps +\alpha^{-\frac13}\nu^{-\frac83}\eps^3)\eps^2.
\end{aligned}   
\end{equation}
\end{lemma}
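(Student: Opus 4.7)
My plan is to decompose $U = \overline{U}_0 + \widetilde{U}_0 + U_{\neq}$ and expand the bilinear sum $\sum_{i,j}\l \cA \de_y^L Q_{\neq}, \cA (\de_i^L U^j \de_j^L U^i)_{\neq}\r_{H^N}$. Since products of two zero modes are themselves zero modes and are annihilated by the projector $(\cdot)_{\neq}$, we are left with three families of interactions: (i) nonzero–nonzero, (ii) nonzero–simple zero, and (iii) nonzero–double zero. In all three the strategy is Cauchy--Schwarz in space followed by Cauchy--Schwarz in time, pairing the $\cA$-weighted nonlinearity against $\norm{\cA\de_y^LQ_{\neq}}_{L^2_tH^N}\lesssim \nu^{-\frac12}\norm{\sqrt{\nu}\nabla_L\cA Q_{\neq}}_{L^2_tH^N}\lesssim \nu^{-\frac12}\eps$.

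For nonzero–nonzero interactions of type (i), I would bound $\norm{\cA(fg)_{\neq}}_{H^N}\lesssim \e^{\delta\nu^{\frac13}t}\norm{fg}_{H^N}$ via \eqref{eq:A-Sobbd}, then apply Sobolev algebra. The individual factors $\norm{\de_i^LU^j_{\neq}}_{H^N}$ are controlled by Lemma \ref{lemma:additional_estim_U}: when both factors are $U^{1,3}_{\neq}$ we use \eqref{eq:reconstruction_m} to rewrite things directly in terms of $\fm(Q,W)_{\neq}$, while for the mixed case involving one $U^2_{\neq}$ we use \eqref{eq:absorption_nablaL} for the $|\nabla_L|$-loaded factor (costing $\nu^{-\frac13}$) and \eqref{eq:extratimedecay} on $U^2_{\neq}$ itself to gain the factor $\sqrt{-\dot\cM_2/\cM_2}$. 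Combining this time-integrable gain with the bootstrap via Lemma \ref{lem:more_enh_dissip_neq} converts the $\nu^{-\frac13}$ loss into the more favorable $(\nu^{-\frac56}\eps)\eps^2$ rate, exactly as in the derivation of \eqref{eq:s0_pressure_bound_3}. The purely $U^{1,3}_{\neq}\cdot U^{1,3}_{\neq}$ case instead yields $(\nu^{-\frac23}\eps)\eps^2$ by the same scheme as \eqref{eq:s0_pressure_bound_1}, which is absorbed into the final bound.

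For nonzero–simple zero interactions of type (ii), I would invoke Lemma \ref{lem:m-product} to distribute $\cA$ asymmetrically: the nonzero-mode factor retains the full $\cA$ weight, while the simple zero factor is controlled in a plain Sobolev norm. When paired with the dispersive decay of Proposition \ref{prop:nonlinear_dispersive_estimates} this produces the $\alpha^{-\frac13}\nu^{-\frac23}\eps$ and $\alpha^{-\frac13}\nu^{-\frac53}\eps^2$ prefactors familiar from Section \ref{sec:simple_zero}. For nonzero–double zero interactions of type (iii), Lemma \ref{lem:m-product} again applies, and combining it with the bound of Proposition \ref{prop:doublezero_estimates} together with the enhanced dissipation $\nu^{\frac13}\norm{\cA F_{\neq}}_{L^2_tH^N}^2 \lesssim \eps^2$ yields contributions of order $\alpha^{-\frac13}\nu^{-\frac83}\eps^3\cdot\eps^2$.

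The main obstacle I anticipate is the bookkeeping in the nonzero–nonzero case (i), where one must identify, for each choice of $(i,j)$, which factor should carry $\cA$, which should carry $\sqrt{-\dot\cM_2/\cM_2}$ (always an $U^2_{\neq}$ factor, if present), and how the $\de_y^L$ from $Q_{\neq}$ on the left combines with $\fm$ on the right through \eqref{eq:absorption_nablaL} or \eqref{eq:reconstruction_m}. The critical point is that for the interactions involving $U^2_{\neq}$ --- which are those that would naively be worst due to the slow $|\nabla_L|$ absorption --- the extra $\cM_2$ decay must be extracted in a time-integrated fashion to avoid losing a factor of $\nu^{-\frac13}$, thereby preserving the $\cO(\nu^{-\frac56}\eps)\eps^2$ threshold.
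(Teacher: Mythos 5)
Your overall decomposition into nonzero--nonzero, nonzero--simple-zero, and nonzero--double-zero interactions matches the paper, and your treatment of the nonzero--nonzero case is essentially the paper's: using \eqref{eq:reconstruction_m} when both factors are $U^{1,3}_{\neq}$, and trading the $\nu^{-\frac13}$ cost from \eqref{eq:absorption_nablaL} against the time-integrated $\cM_2$ gain from \eqref{eq:extratimedecay} when a $U^2_{\neq}$ factor is present, reproduces \eqref{eq:neq_pressure_bd1}--\eqref{eq:neq_pressure_bd3}. You also correctly flag the role of Lemma \ref{lem:m-product}, though in the paper it is needed for exactly one term, \eqref{eq:neq_pressure_bd6} (where a $\de_y^L$ lands on $U^3_{\neq}$ and can only be absorbed through $\fm$); for the double-zero interactions \eqref{eq:neq_pressure_bd8} it is not used at all, since \eqref{eq:reconstruction_m} plus Proposition \ref{prop:doublezero_estimates} suffice.

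The genuine gap is in your handling of the nonzero--simple-zero interactions, where you invoke the dispersive decay of Proposition \ref{prop:nonlinear_dispersive_estimates} and expect prefactors $\alpha^{-\frac13}\nu^{-\frac23}\eps$ and $\alpha^{-\frac13}\nu^{-\frac53}\eps^2$. These do not appear in Lemma \ref{lem:neq_pressure}. In the paper, dispersion is deployed only when \emph{neither} factor is a nonzero mode (Lemma \ref{lem:s0_pressure} for simple-zero self-interactions, Proposition \ref{prop:doublezero_estimates} for simple-zero--double-zero couplings). Once a nonzero factor is present, its enhanced dissipation (via $\norm{\nabla_L\cA Q_{\neq}}_{L^2_tH^N}\lesssim\nu^{-\frac12}\eps$ and Lemma \ref{lem:more_enh_dissip_neq}) already supplies the needed time-integrability, and the simple-zero factor is controlled directly by the bootstrap bound $\norm{(\widetilde Q_0,\widetilde W_0)}_{L^\infty_tH^N}\lesssim\eps$. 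This yields the $\alpha$-independent contributions $(\nu^{-\frac23}\eps)\eps^2$ and $(\nu^{-\frac12}\eps)\eps^2$ in \eqref{eq:neq_pressure_bd4}--\eqref{eq:neq_pressure_bd7}. Your route would instead introduce a term of order $\alpha^{-\frac13}\nu^{-\frac53}\eps^2\cdot\eps^2$, which is not dominated by the stated bound $(\nu^{-\frac56}\eps+\alpha^{-\frac13}\nu^{-\frac83}\eps^3)\eps^2$ when $\eps\ll\nu$ and is in any case worse for small $\alpha$. Moreover, after Lemma \ref{lem:m-product} the simple-zero factor sits in an $H^N$ norm, so the $L^\infty$ decay of Proposition \ref{prop:nonlinear_dispersive_estimates} is not even directly applicable. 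In short: dispersion should be dropped from this part of the argument and replaced by the plain bootstrap Sobolev control on $\widetilde Q_0,\widetilde W_0$.
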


\begin{proof}
We will prove the following bounds, from which Lemma \ref{lem:neq_pressure} follows:
\begin{align}
    \int_0^T\left|  \l \cA\de_y^LQ_{\neq},\cA(\de_{i}U^{j}_{\neq}\de_{j}U^{i}_{\neq})\r_{H^N} \right|&\lesssim (\nu^{-\frac23}\eps)\eps^2 ,\qquad i,j\in\{1,3\},\label{eq:neq_pressure_bd1}\\
    \int_0^T\left|  \l \cA\de_y^LQ_{\neq},\cA (\de_y^LU^{j}_{\neq}\de_{j}U^2_{\neq})\r_{H^N} \right|&\lesssim (\nu^{-\frac56}\eps)\eps^2, \qquad j\in\{1,3\},\label{eq:neq_pressure_bd2}\\
    \int_0^T\left|  \l \cA\de_y^LQ_{\neq},\cA (\de_y^LU^2_{\neq}\de_y^LU^2_{\neq})\r_{H^N} \right|&\lesssim (\nu^{-\frac23}\eps)\eps^2 ,\label{eq:neq_pressure_bd3}\\
    \int_0^T\left|  \l \cA\de_y^LQ_{\neq},\cA (\de_z\widetilde{U}^{j}_{0}\de_{j}U^{3}_{\neq})\r_{H^N} \right|&\lesssim (\nu^{-\frac23}\eps)\eps^2, \qquad j\in\{1,3\},\label{eq:neq_pressure_bd4}\\
    \int_0^T\left|  \l \cA\de_y^LQ_{\neq},\cA (\de_y\widetilde{U}^{j}_{0}\de_{j}U^2_{\neq})\r_{H^N} \right|&\lesssim (\nu^{-\frac12}\eps)\eps^2, \qquad j\in\{1,3\},\label{eq:neq_pressure_bd5}\\
    \int_0^T\left|  \l \cA\de_y^LQ_{\neq},\cA (\de_y^LU^{3}_{\neq}\de_z\widetilde{U}^2_{0})\r_{H^N} \right|&\lesssim (\nu^{-\frac23}\eps)\eps^2,\label{eq:neq_pressure_bd6}\\
    \int_0^T\left|  \l \cA\de_y^LQ_{\neq},\cA (\de_y\widetilde{U}^2_{0}\de_y^LU^2_{\neq})\r_{H^N} \right|&\lesssim (\nu^{-\frac23}\eps)\eps^2,\label{eq:neq_pressure_bd7}\\
    \int_0^T\left|  \l \cA\de_y^LQ_{\neq},\cA (\de_y\overline{U}^{j}_{0}\de_{j}U^2_{\neq})\r_{H^N} \right|&\lesssim (\nu^{-\frac32}\eps^2 +\alpha^{-\frac13}\nu^{-\frac83}\eps^3)\eps^2, \qquad j\in\{1,3\}.\label{eq:neq_pressure_bd8}
\end{align}
For \eqref{eq:neq_pressure_bd1} we use \eqref{eq:lowerbound-m}, \eqref{eq:A-Sobbd} and \eqref{eq:reconstruction_m} to obtain that for $i,j\in\{1,3\}$
\begin{align}
    \int_0^T \left|\l \cA\de_y^LQ_{\neq},\cA(\de_{i}U^{j}_{\neq}\de_{j}U^{i}_{\neq})\r_{H^N}\right|&\leq \int_0^T \norm{\cA\de_y^LQ_{\neq}}_{H^N}\e^{\delta \nu^{\frac13}t}\norm{\de_{i}U^{j}_{\neq}}_{H^N}\norm{\de_{j}U^{i}_{\neq}}_{H^N}\dd t \\
    &\lesssim \norm{\nabla_L\cA  Q_{\neq}}_{L^2_tH^N}\norm{\cA(Q,W)_{\neq}}_{L^2_tH^N}\norm{\cA(Q,W)_{\neq}}_{L^\infty_t H^N}\\
    &\lesssim(\nu^{-2/3}\eps)\eps^2.
\end{align}
For \eqref{eq:neq_pressure_bd2}, we apply \eqref{eq:absorption_nablaL} and  \eqref{eq:extratimedecay}  
to deduce that when $j\in\{1,3\}$,
\begin{align} 
\int_0^T \left|\l \cA\de_y^LQ_{\neq},\cA(\de_y^LU^{j}_{\neq}\de_{j}U^2_{\neq})\r_{H^N}\right|&\lesssim \norm{\nabla_L\cA  Q_{\neq}}_{L^2_tH^N}\nu^{-\frac13}\norm{\cA(Q,W)_{\neq}}_{L^\infty_t H^N}\norm{\sqrt{-\frac{\dot \cM}{\cM}}\cA Q_{\neq}}_{L^2_t H^N}\notag\\
&\lesssim (\nu^{-\frac56}\eps)\eps^2. 
\end{align}
Similarly we obtain \eqref{eq:neq_pressure_bd3} by using \eqref{eq:absorption_nablaL} via
\begin{equation}
\int_0^T \left|\l \cA\de_y^LQ_{\neq},\cA(\de_y^LU^2_{\neq}\de_y^LU^2_{\neq})\r_{H^N}\right|\lesssim \norm{\nabla_L\cA  Q_{\neq}}_{L^2_tH^N}\norm{\cA Q_{\neq}}_{L^2_tH^N}\norm{\cA Q_{\neq}}_{L^\infty_t H^N}\lesssim (\nu^{-\frac23}\eps)\eps^2.
\end{equation}
For \eqref{eq:neq_pressure_bd4} we invoke \eqref{eq:reconstruction_m}  and \eqref{eq:simple0_conversion}, obtaining for $j\in\{1,3\}$ that
\begin{align}     
\int_0^T \left| \l \cA\de_y^LQ_{\neq},\cA(\de_z\widetilde{U}^{j}_{0}\de_{j}U^{3}_{\neq})\r_{H^N}\right|&\lesssim \norm{\nabla_L\cA  Q_{\neq}}_{L^2_tH^N}\norm{(\widetilde{Q}_0,\widetilde{W}_0)}_{L^\infty_t H^N}\norm{\cA(Q,W)_{\neq}}_{L^2_t H^N}\\
&\lesssim (\nu^{-\frac23}\eps)\eps^2,
\end{align}
and similarly, using also \eqref{eq:extratimedecay}, 
\begin{align} 
\int_0^T \left| \l \cA\de_y^LQ_{\neq},\cA (\de_y\widetilde{U}^{j}_{0}\de_{j}U^2_{\neq})\r_{H^N}\right|&\lesssim  \norm{\nabla_L\cA  Q_{\neq}}_{L^2_tH^N}\norm{(\widetilde{Q}_0,\widetilde{W}_0)}_{L^\infty_t H^N}\norm{\sqrt{-\frac{\dot \cM}{\cM}}\cA Q_{\neq}}_{L^2_tH^N}\\
&\lesssim (\nu^{-\frac12}\eps)\eps^2,
\end{align}
which establishes \eqref{eq:neq_pressure_bd5}. The bound \eqref{eq:neq_pressure_bd7} is obtained similarly as
\begin{equation} 
\int_0^T \left| \l \cA\de_y^LQ_{\neq},\cA (\de_y\widetilde{U}^2_{0}\de_y^LU^2_{\neq})\r_{H^N}\right|\lesssim \norm{\nabla_L\cA  Q_{\neq}}_{L^2_tH^N}\norm{\widetilde{Q}_0}_{L^\infty_t H^N}\norm{\cA Q_{\neq}}_{L^2_tH^N}\lesssim (\nu^{-\frac23}\eps)\eps^2.
\end{equation}
For the remaining term \eqref{eq:neq_pressure_bd6} involving a product of non-zero and simple zero modes, we use the product estimate \eqref{eq:A-product-bd} from Lemma \ref{lem:m-product} to obtain a low threshold: Together with \eqref{eq:U-QW} and \eqref{eq:simple0_conversion}, this gives that
\begin{align}      
\int_0^T \left| \l \cA\de_y^LQ_{\neq},\cA(\de_y^LU^{3}_{\neq}\de_z\widetilde{U}^2_{0})\r_{H^N}\right|&\lesssim \int_0^T \norm{ \cA\de_y^LQ_{\neq}}_{H^N}\norm{\cA\de_y^LU^{3}_{\neq}}_{H^N}\norm{\l\nabla\r\de_z\widetilde{U}^2_{0}}_{H^N}\notag\\
&\lesssim \norm{\nabla_L\cA  Q_{\neq}}_{L^2_tH^N}\norm{\cA(Q,W)_{\neq}}_{L^2_tH^N}\norm{\widetilde{Q}_0}_{L^\infty_t H^N}\notag\\
&\lesssim (\nu^{-\frac23}\eps)\eps^2.
\end{align} 
Finally we consider the double zero modes interactions in \eqref{eq:neq_pressure_bd8}: using \eqref{eq:reconstruction_m} and Proposition \ref{prop:doublezero_estimates}, for $j\in\{1,3\}$ we have the bound
\begin{align}\label{eq:doublezerosetim_simplezero}
\int_0^T \left| \l \cA\de_y^LQ_{\neq},\cA (\de_y\overline{U}^{j}_{0}\de_{j}U^2_{\neq})\r_{H^N}\right|&\lesssim \norm{\nabla_L\cA  Q_{\neq}}_{L^2_tH^N}\norm{\de_y\overline{U}^{j}_0}_{L^2_tH^N}\norm{\cA Q_{\neq}}_{L^\infty_t H^N}\notag\\
&\lesssim (\nu^{-\frac32}\eps^2 +\alpha^{-\frac13}\nu^{-\frac83}\eps^3)\eps^2.
\end{align}
The proof of the lemma is concluded.
\end{proof}

The next result concerns the first transport nonlinearity appearing in \eqref{eq:def_NE}.

\begin{lemma}\label{lem:neq_Qtransport}
Under the assumptions of Theorem \ref{thm:bootstrap} we have the bound
\begin{equation}
   \int_0^T\abs{\l \cA Q_{\neq},\cA\Delta_L\cT_{\neq}(U,U^2)\r_{H^N}} = \int_0^T \abs{\l \cA\nabla_LQ_{\neq},\cA\nabla_L(U\cdot \nabla_L U^2)_{\neq}\r_{H^N}}\lesssim (\nu^{-\frac56}\eps +\alpha^{-\frac13}\nu^{-\frac83}\eps^3)\eps^2.
\end{equation}
\end{lemma}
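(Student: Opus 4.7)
The plan is to parallel the structure of Lemma \ref{lem:neq_pressure}. First, by Cauchy--Schwarz in time the integral in question is bounded by $\norm{\nabla_L \cA Q_{\neq}}_{L^2_t H^N} \cdot \norm{\cA \nabla_L (U\cdot \nabla_L U^2)_{\neq}}_{L^2_t H^N}$, so the task reduces to controlling $\norm{\cA \nabla_L (U\cdot \nabla_L U^2)_{\neq}}_{L^2_t H^N}$ by $(\nu^{-\frac13}\eps+\alpha^{-\frac13}\nu^{-\frac{13}{6}}\eps^3)\eps$, since $\norm{\nabla_L \cA Q_{\neq}}_{L^2_t H^N}\lesssim \nu^{-\frac12}\eps$. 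Next I would apply the Leibniz rule to rewrite
\begin{equation}
    \nabla_L(U^j\de_j^L U^2)=\nabla_L U^j \cdot \de_j^LU^2+U^j\cdot \nabla_L\de_j^LU^2,\qquad 1\leq j\leq 3,
\end{equation}
and then decompose each factor into its double-zero, simple-zero, and non-zero parts (noting $\overline{U}^2_0=0$ and $\de_x\widetilde{F}_0=\de_x\overline{F}_0=0$). This produces a list of bilinear interactions analogous to \eqref{eq:neq_pressure_bd1}--\eqref{eq:neq_pressure_bd8}, and each contribution should be estimated by a direct parallel.

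For the non-zero $\times$ non-zero interactions I would distinguish whether a $U^2_{\neq}$ factor occurs: in its absence, I would use \eqref{eq:reconstruction_m} from Lemma \ref{lemma:additional_estim_U} together with \eqref{eq:lowerbound-m} to trade the extra $\nabla_L$ for $\fm$ and estimate by the enhanced dissipation quantity $\nu^{\frac13}\norm{\cA(Q,W)_{\neq}}^2_{L^2_tH^N}\lesssim \eps^2$, yielding $(\nu^{-\frac23}\eps)\eps^2$. When a $U^2_{\neq}$ factor is present, at most one of the two gradients can exceed the regularity of $Q$ (which translates into $\abs{\nabla_L}^2 U^2_{\neq}$), so I would split the $\nu^{-\frac13}$ loss from \eqref{eq:absorption_nablaL} against the $\sqrt{-\dot\cM_2/\cM_2}$ gain from \eqref{eq:extratimedecay}, arriving at the worse bound $(\nu^{-\frac56}\eps)\eps^2$ -- this is the threshold-setting interaction. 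The purely $U^2_{\neq}\de_y^L U^2_{\neq}$ term is as in \eqref{eq:neq_pressure_bd3}.

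For the non-zero $\times$ simple-zero interactions I would apply the product estimate \eqref{eq:A-product-bd} of Lemma \ref{lem:m-product} to keep $\cA$ only on the non-zero factor, then use \eqref{eq:simple0_conversion} to convert simple-zero velocity components to $\widetilde{Q}_0$ or $\widetilde{W}_0$ in $L^\infty_t H^N$ (bounded by $\eps$ via Proposition \ref{prop:simple0_finalbound}). This gives contributions of order $(\nu^{-\frac23}\eps)\eps^2$ or $(\nu^{-\frac12}\eps)\eps^2$, the latter whenever extra time decay via $\cM_2$ is harvested from a $U^2_{\neq}$ factor. Finally, for the non-zero $\times$ double-zero interactions (only $\overline{U}^{1}_0\de_x U^2_{\neq}$ and $\overline{U}^{3}_0 \de_z U^2_{\neq}$ survive), I would pair $\norm{\de_y \overline{U}_0^j}_{L^2_tH^N}$ from Proposition \ref{prop:doublezero_estimates} with $\norm{\cA Q_{\neq}}_{L^\infty_tH^N}\lesssim \eps$, exactly as in \eqref{eq:doublezerosetim_simplezero}, producing the $(\nu^{-\frac32}\eps^2+\alpha^{-\frac13}\nu^{-\frac83}\eps^3)\eps^2$ contribution.

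The main obstacle will be the interactions of the type $U^2_{\neq} \de^L_y \de_j^L U^2_{\neq}$ and $\de_y^L U^j_{\neq} \de_j^L U^2_{\neq}$ where two derivatives stack on $U^2_{\neq}$: without care one loses an additional factor of $\nu^{-\frac13}$, which would push the threshold above $\nu^{-\frac56}$. The remedy is to always place the double-derivative factor in $L^2_t$ and pair it with the $\cM_2$-ghost quantity $\norm{\sqrt{-\dot\cM/\cM}\,\cA Q_{\neq}}_{L^2_tH^N}\lesssim \eps$, while absorbing the remaining factor in $L^\infty_t$; the same care has been taken in Lemma \ref{lem:neq_pressure} and will transfer directly here.
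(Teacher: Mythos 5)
Your proposal follows essentially the same route as the paper's proof: the Leibniz and mode decomposition, the use of \eqref{eq:absorption_nablaL}, \eqref{eq:reconstruction_m}, \eqref{eq:extratimedecay}, the product estimate \eqref{eq:A-product-bd} for non-zero $\times$ simple-zero interactions, and Proposition \ref{prop:doublezero_estimates} for the double-zero contributions all match, and you correctly identify the threshold-setting $\nu^{-5/6}$ interaction where two $\nabla_L$'s threaten to stack on $U^2_{\neq}$. The only cosmetic difference is that you Cauchy--Schwarz globally in time up front, whereas the paper keeps the pairing and applies H\"older term by term; the resulting estimates are the same.
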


\begin{proof}
The lemma follows from the following more detailed bounds, which we will establish subsequently:
\begin{align}
    \int_0^T\left|  \l \cA\nabla_LQ_{\neq},\cA\nabla_L(U_{\neq}\cdot\nabla_L U^2_{\neq})\r_{H^N} \right|&\lesssim (\nu^{-\frac23}\eps)\eps^2+(\nu^{-\frac56}\eps)\eps^2,\label{eq:neq_Qtransport_bd1}\\ 
    \int_0^T\left|  \l \cA\nabla_LQ_{\neq},\cA\nabla_L(\widetilde{U}_{0}\cdot\nabla_L U^2_{\neq})\r_{H^N} \right|&\lesssim (\nu^{-\frac23}\eps)\eps^2,\label{eq:neq_Qtransport_bd2}\\ 
    \int_0^T\left|  \l \cA\nabla_LQ_{\neq},\cA\nabla_L(U^2_{\neq}\de_y \widetilde{U}^2_{0})\r_{H^N} \right|&\lesssim (\nu^{-\frac23}\eps)\eps^2,\label{eq:neq_Qtransport_bd3}\\ 
    \int_0^T\left|  \l \cA\nabla_LQ_{\neq},\cA\nabla_L(U^3_{\neq}\de_z \widetilde{U}^2_{0})\r_{H^N} \right|&\lesssim (\nu^{-\frac23}\eps)\eps^2,\label{eq:neq_Qtransport_bd4}\\
    \int_0^T\left|  \l \cA\nabla_LQ_{\neq},\cA\nabla_L(\overline{U}^{j}_{0}\de_{j} U^2_{\neq})\r_{H^N} \right|&\lesssim (\nu^{-\frac23}\eps + \nu^{-\frac32}\eps^2 +\alpha^{-\frac13}\nu^{-\frac83}\eps^3)\eps^2,\quad j\in\{1,3\}.\label{eq:neq_Qtransport_bd5}
\end{align}
For \eqref{eq:neq_Qtransport_bd1}, we have with \eqref{eq:A-Sobbd}, \eqref{eq:absorption_nablaL}, \eqref{eq:reconstruction_m}, and \eqref{eq:extratimedecay} that for $j\in\{1,3\}$
\begin{align}
    &\int_0^T \left|  \l \cA\nabla_LQ_{\neq},\cA\nabla_L(U^{j}_{\neq}\de_{j} U^2_{\neq})\r_{H^N}\right|\leq \int_0^T \norm{\nabla_L\cA Q_{\neq}}_{H^N} \e^{\delta \nu^{\frac13}t}\norm{\nabla_L(U^{j}_{\neq}\de_{j} U^2_{\neq})}_{H^N}\\
    &\leq \int_0^T \norm{\nabla_L\cA Q_{\neq}}_{H^N}\e^{\delta \nu^{\frac13}t}\left(\norm{\nabla_LU^{j}_{\neq}}_{H^N}\norm{\de_{j} U^2_{\neq}}_{H^N}+\norm{U^{j}_{\neq}}_{H^N}\norm{\nabla_L\de_{j} U^2_{\neq}}_{H^N}\right)\\
    &\lesssim \norm{\nabla_L\cA  Q_{\neq}}_{L^2_tH^N}\nu^{-\frac13}\norm{\cA(Q,W)_{\neq}}_{L^\infty_t H^N}\norm{\sqrt{-\frac{\dot \cM}{\cM}}\cA Q_{\neq}}_{L^2_tH^N}\\
    &\qquad +\norm{\nabla_L\cA  Q_{\neq}}_{L^2_tH^N}\norm{\cA(Q,W)_{\neq}}_{L^\infty_t H^N}\norm{\cA Q_{\neq}}_{L^2_tH^N}\\
    &\lesssim (\nu^{-\frac56}\eps)\eps^2+(\nu^{-\frac23}\eps)\eps^2,
\end{align}
as well as
\begin{align}
    &\int_0^T \left|  \l \cA\nabla_LQ_{\neq},\cA\nabla_L(U^2_{\neq}\de_y^L U^2_{\neq})\r_{H^N}\right|\\
    &\quad\leq \int_0^T \norm{\nabla_L\cA Q_{\neq}}_{H^N}\e^{\delta \nu^{\frac13}t}\left(\norm{\nabla_LU^{2}_{\neq}}_{H^N}\norm{\de_y^L U^2_{\neq}}_{H^N}+\norm{U^{2}_{\neq}}_{H^N}\norm{\nabla_L\de_y^L U^2_{\neq}}_{H^N}\right)\\
    &\quad\lesssim \norm{\nabla_L\cA  Q_{\neq}}_{L^2_tH^N}\norm{\cA Q_{\neq}}_{L^\infty_t H^N}\norm{\cA Q_{\neq}}_{L^2_tH^N}\\
    &\qquad +\norm{\nabla_L\cA  Q_{\neq}}_{L^2_tH^N}\norm{\sqrt{-\frac{\dot \cM}{\cM}}\cA Q_{\neq}}_{L^2_t H^N}\nu^{-\frac13}\norm{\cA Q_{\neq}}_{L^\infty_t H^N}\\
    &\quad\lesssim (\nu^{-\frac23}\eps)\eps^2+(\nu^{-\frac56}\eps)\eps^2.
\end{align}
For \eqref{eq:neq_Qtransport_bd2} we use \eqref{eq:simple0_conversion} and \eqref{eq:reconstruction_m}, and in case two derivatives fall on $U^2_{\neq}$ also the product estimate \eqref{eq:A-product-bd} for $\cA$ to obtain that
\begin{align}
    \int_0^T \left|  \l \cA\nabla_LQ_{\neq},\cA\nabla_L(\widetilde{U}_{0}\cdot\nabla_L U^2_{\neq})\r_{H^N}\right|&\lesssim \norm{\nabla_L\cA  Q_{\neq}}_{L^2_tH^N}\norm{(\widetilde{Q},\widetilde{W})_{0}}_{L^\infty_t H^N}\norm{\cA Q_{\neq}}_{L^2_tH^N}\\
    &\lesssim (\nu^{-\frac23}\eps)\eps^2.
\end{align}
The remaining simple zero interactions \eqref{eq:neq_Qtransport_bd3} and \eqref{eq:neq_Qtransport_bd4} are treated as follows: Using \eqref{eq:simple0_conversion} and \eqref{eq:reconstruction_m}, we bound
\begin{align}
    \int_0^T \left|  \l \cA\nabla_LQ_{\neq},\cA\nabla_L(U^2_{\neq}\de_y \widetilde{U}^2_{0})\r_{H^N}\right|\lesssim \norm{\nabla_L\cA  Q_{\neq}}_{L^2_tH^N}\norm{\cA Q_{\neq}}_{L^2_tH^N}\norm{\widetilde{Q}_0}_{L^\infty_t H^N}\lesssim (\nu^{-\frac23}\eps)\eps^2,
\end{align}
and similarly, again invoking \eqref{eq:A-product-bd} when $\nabla_L$ falls on  $U^3_{\neq}$,
\begin{align} 
    \int_0^T \left|  \l \cA\nabla_LQ_{\neq},\cA\nabla_L(U^3_{\neq}\de_z \widetilde{U}^2_{0})\r_{H^N}\right|\lesssim \norm{\nabla_L\cA  Q_{\neq}}_{L^2_tH^N}\norm{\cA(Q,W)_{\neq}}_{L^2_tH^N}\norm{\widetilde{Q}_0}_{L^\infty_t H^N} \lesssim (\nu^{-\frac23}\eps)\eps^2.
\end{align}
Finally, the double zero interactions in \eqref{eq:neq_Qtransport_bd5} are treated as in \eqref{eq:doublezerosetim_simplezero}:
using \eqref{eq:reconstruction_m} and Proposition \ref{prop:doublezero_estimates} we have that for that for $j\in\{1,3\}$
\begin{align}
    \int_0^T \left|  \l \cA\nabla_LQ_{\neq},\cA\nabla_L(\overline{U}^{j}_{0}\de_{j} U^2_{\neq})\r_{H^N}\right|&\lesssim \norm{\nabla_L\cA Q_{\neq}}_{L^2_tH^N}\norm{ \overline{U}^{j}_0}_{L^\infty_t H^N}\norm{\cA Q_{\neq}}_{L^2_t H^N}\\
    &\quad +\norm{\nabla_L\cA Q_{\neq}}_{L^2_tH^N}\norm{\de_y \overline{U}^{j}_0}_{L^2_tH^N}\norm{\cA Q_{\neq}}_{L^\infty_t H^N}\\
    &\lesssim (\nu^{-\frac23}\eps + \nu^{-\frac32}\eps^2 +\alpha^{-\frac13}\nu^{-\frac83}\eps^3)\eps^2.
\end{align}
This concludes the proof.
\end{proof}

The last two terms in \eqref{eq:def_NE} can be estimated at once, as we show in the result below. This is the last step in our proof.
\begin{lemma}\label{lem:neq_Wtransport}
Under the assumptions of Theorem \ref{thm:bootstrap} we have for $r\in\{1,3\}$ that
\begin{equation}
\begin{aligned}
    \int_0^T\abs{\l \cA W_{\neq},\cA \abs{\nabla_L}\de_{4-r}\cT_{\neq}(U,U^r)\r_{H^N}} &= \int_0^T\abs{\l \cA|\nabla_L|W_{\neq},\cA \de_{4-r}(U\cdot\nabla_LU^{r})_{\neq}\r_{H^N}}\\
    &\lesssim (\nu^{-\frac56}\eps +\alpha^{-\frac13}\nu^{-\frac83}\eps^3)\eps^2.
\end{aligned}    
\end{equation}
\end{lemma}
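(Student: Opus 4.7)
The strategy parallels that of Lemma \ref{lem:neq_Qtransport} with $\cA\nabla_L Q_{\neq}$ replaced by $\cA|\nabla_L|W_{\neq}$ on the left and $U^r$, $r\in\{1,3\}$, in place of $U^2$ on the right. The plan is to decompose $U\cdot\nabla_L U^r=\sum_j U^j\de_j^L U^r$, distribute the external horizontal derivative $\de_{4-r}\in\{\de_x,\de_z\}$ via Leibniz, and split each resulting product into mode interactions (non-zero $\times$ non-zero, simple zero $\times$ non-zero, double zero $\times$ non-zero), retaining only those with a non-vanishing pairing against the non-zero mode $\cA|\nabla_L|W_{\neq}$. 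On the left we always extract $\|\nabla_L\cA W_{\neq}\|_{L^2_tH^N}\lesssim \nu^{-1/2}\eps$ from \eqref{eq:btstrap_W_neq}.

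For non-zero $\times$ non-zero interactions, \eqref{eq:absorption_nablaL}, \eqref{eq:reconstruction_m} and \eqref{eq:extratimedecay} convert derivatives of $U^j_{\neq}$ back into $\cA(Q,W)_{\neq}$, paired with $\sqrt{-\frac{\dot\cM}{\cM}}\cA Q_{\neq}$ whenever a derivative lands on $U^2_{\neq}$; this reproduces contributions of order $(\nu^{-2/3}+\nu^{-5/6})\eps^3$ familiar from Lemma \ref{lem:neq_Qtransport}. For simple zero $\times$ non-zero interactions, I would use \eqref{eq:simple0_conversion} to trade the $\widetilde{U}_0$ factors for $\widetilde{Q}_0,\widetilde{W}_0$ (controlled by $\eps$ in $L^\infty_tH^N$) and pair against $\cA(Q,W)_{\neq}$ or $\sqrt{-\frac{\dot\cM}{\cM}}\cA Q_{\neq}$ in $L^2_tH^N$, landing on a $(\nu^{-5/6}\eps)\eps^2$ contribution. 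In the one delicate case, where a full $|\nabla_L|$ factor would otherwise have to be extracted from $U^{1,3}_{\neq}$ at a cost of $\nu^{-1/3}$, I would invoke the product estimate \eqref{eq:A-product-bd} to push the simple zero factor inside $\cA$, exactly as was done to obtain \eqref{eq:neq_pressure_bd6}.

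The double zero $\times$ non-zero interactions follow \eqref{eq:doublezerosetim_simplezero}: Proposition \ref{prop:doublezero_estimates} supplies both the $L^\infty_tH^N$ and the $\nu^{1/2}$-weighted $L^2_tH^N$ control of $\overline{U}^j_0$, which after pairing against $\cA(Q,W)_{\neq}$ yields the terminal $(\nu^{-3/2}\eps^2+\alpha^{-1/3}\nu^{-8/3}\eps^3)\eps^2$ contribution that ultimately dictates the stated threshold. The main bookkeeping obstacle, absent from Lemma \ref{lem:neq_Qtransport}, is that $U^r_{\neq}$ for $r\in\{1,3\}$ depends on both $Q_{\neq}$ and $W_{\neq}$ through \eqref{eq:U-QW}, so each of the interactions above splits into a $Q$-part and a $W$-part, differing only by the prefactor $\sqrt{(\beta-1)/\beta}$ and by swapping the roles of the $Q$- and $W$-type multiplier bounds in Lemma \ref{lemma:additional_estim_U}. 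Since $B_\beta\gtrsim 1$ makes this prefactor harmless, both sub-terms are absorbed into the same bound, completing the argument.
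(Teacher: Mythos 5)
Your proposal follows essentially the same route as the paper: the Leibniz split of $\de_{4-r}(U\cdot\nabla_L U^r)_{\neq}$ into non-zero--non-zero, simple-zero--non-zero, and double-zero--non-zero interactions, conversion of derivatives of $U$ back to $(Q,W)_{\neq}$ resp.\ $\widetilde{Q}_0,\widetilde{W}_0$ via Lemma \ref{lemma:additional_estim_U}, extraction of $\norm{\nabla_L\cA W_{\neq}}_{L^2_tH^N}\lesssim\nu^{-1/2}\eps$ on the left, the product estimate \eqref{eq:A-product-bd} for the delicate simple-zero--non-zero term (this is exactly what the paper does for \eqref{eq:neq_Wtransport_bd3}), and Proposition \ref{prop:doublezero_estimates} for the double-zero terms, which dominate and give the stated threshold. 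Two small inaccuracies, neither of which alters the conclusion: your bound of $(\nu^{-5/6}\eps)\eps^2$ for the simple-zero--non-zero interactions is an overestimate of the paper's $(\nu^{-2/3}\eps)\eps^2$ (in those terms one never pays the $\nu^{-1/3}$ loss from \eqref{eq:absorption_nablaL}, since no second $|\nabla_L|$-derivative needs to fall on $U^{1,3}_{\neq}$); and you do not explicitly note that $\de_{4-r}\overline{U}^j_0=0$ ($\de_{4-r}\in\{\de_x,\de_z\}$), which the paper uses to discard the Leibniz term where the outer derivative hits a double-zero factor, leaving only $\overline{U}^j_0\,\de_{4-r}\de_j U^r_{\neq}$ to estimate. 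Both points are easily repaired and the overall argument is sound.
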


\begin{proof}
We prove this lemma by establishing the following estimates:
\begin{align}
    \int_0^T\left|  \l \cA|\nabla_L|W_{\neq},\cA\de_{4-r}(U^{j}_{\neq}\de_{j} U^{r}_{\neq})\r_{H^N} \right|&\lesssim (\nu^{-\frac23}\eps)\eps^2,\qquad j\in\{1,3\},\label{eq:neq_Wtransport_bd1}\\ 
    \int_0^T\left|  \l \cA|\nabla_L|W_{\neq},\cA\de_{4-r}(U^2_{\neq}\de_y^L U^{r}_{\neq})\r_{H^N} \right|&\lesssim (\nu^{-\frac56}\eps)\eps^2,\label{eq:neq_Wtransport_bd2}\\
    \int_0^T\left|  \l \cA|\nabla_L|W_{\neq},\cA\de_{4-r}(\widetilde{U}_{0}\cdot\nabla_L U^{r}_{\neq})\r_{H^N} \right|&\lesssim (\nu^{-\frac23}\eps)\eps^2,\label{eq:neq_Wtransport_bd3}\\ 
    \int_0^T\left|  \l \cA|\nabla_L|W_{\neq},\cA\de_{4-r}(U^{j}_{\neq}\de_{j} \widetilde{U}^{r}_{0})\r_{H^N} \right|&\lesssim (\nu^{-\frac23}\eps)\eps^2,\qquad j\in\{2,3\},\label{eq:neq_Wtransport_bd4}\\
    \int_0^T\left|  \l \cA|\nabla_L|W_{\neq},\cA\de_{4-r}(\overline{U}^{j}_{0}\de_{j} U^{r}_{\neq})\r_{H^N} \right|&\lesssim (\nu^{-\frac23}\eps)\eps^2,\qquad j\in\{1,3\},\label{eq:neq_Wtransport_bd5}\\ 
    \int_0^T\left|  \l \cA|\nabla_L|W_{\neq},\cA\de_{4-r}(U^2_{\neq}\de_y \overline{U}^{r}_{0})\r_{H^N} \right|&\lesssim (\nu^{-\frac32}\eps^2+\alpha^{-\frac13}\nu^{-\frac83}\eps^3)\eps^2.\label{eq:neq_Wtransport_bd6}
\end{align}

To obtain \eqref{eq:neq_Wtransport_bd1}, we use \eqref{eq:A-Sobbd} and \eqref{eq:reconstruction_m} to conclude that for $j,r\in\{1,3\}$ there holds that
\begin{align}
    &\int_0^T \left| \l \cA|\nabla_L|W_{\neq},\cA\de_{4-r}(U^{j}_{\neq}\de_{j} U^{r}_{\neq})\r_{H^N}\right|\\
    &\qquad\leq \int_0^T \norm{\nabla_L\cA W_{\neq}}_{H^N}\e^{\delta \nu^{\frac13}t}\left(\norm{\de_{4-r}U^{j}_{\neq}}_{H^N}\norm{\de_{j} U^{r}_{\neq}}_{H^N}+\norm{U^{j}_{\neq}}_{H^N}\norm{\de_{4-r}\de_{j} U^{r}_{\neq}}_{H^N}\right)\\
    &\qquad\lesssim \norm{\nabla_L\cA  W_{\neq}}_{L^2_tH^N}\norm{\cA(Q,W)_{\neq}}_{L^\infty_t H^N}\norm{\cA(Q,W)_{\neq}}_{L^2_t H^N}\\
    &\qquad\lesssim (\nu^{-\frac23}\eps)\eps^2.
\end{align}
Analogously, employing \eqref{eq:extratimedecay} and \eqref{eq:absorption_nablaL} we bound the $U^2_{\neq}\de_y^LU^{r}_{\neq}$ interaction in \eqref{eq:neq_Wtransport_bd2} as 
\begin{align} 
    &\int_0^T \left| \l \cA|\nabla_L|W_{\neq},\cA\de_{4-r}(U^2_{\neq}\de_y^L U^{r}_{\neq})\r_{H^N}\right|\\
    &\qquad\lesssim \norm{\nabla_L\cA  W_{\neq}}_{L^2_tH^N}\norm{\sqrt{-\frac{\dot \cM}{\cM}}\cA Q_{\neq}}_{L^2_t H^N}\nu^{-\frac13}\norm{\cA(Q,W)_{\neq}}_{L^\infty_t H^N}\\
    &\qquad\lesssim (\nu^{-\frac56}\eps)\eps^2. 
\end{align}
Using also \eqref{eq:simple0_conversion} and Lemma \ref{lem:m-product}, we obtain \eqref{eq:neq_Wtransport_bd3} as
\begin{align}
    \int_0^T \left| \l \cA|\nabla_L|W_{\neq},\cA\de_{4-r}(\widetilde{U}_{0}\cdot \nabla_L U^{r}_{\neq})\r_{H^N}\right|&\lesssim \norm{\nabla_L\cA  W_{\neq}}_{L^2_tH^N}\norm{(\widetilde{Q},\widetilde{W})_{0}}_{L^\infty_t H^N}\norm{\cA(Q,W)_{\neq}}_{L^2_t H^N}\notag\\
    &\lesssim (\nu^{-\frac23}\eps)\eps^2.
\end{align}
For \eqref{eq:neq_Wtransport_bd4} we use \eqref{eq:reconstruction_m} and \eqref{eq:simple0_conversion} to deduce that for $j\in\{2,3\}$
\begin{align}
    \int_0^T \left| \l \cA|\nabla_L|W_{\neq},\cA\de_{4-r}(U^{j}_{\neq}\de_{j} \widetilde{U}^{r}_{0})\r_{H^N}\right|&\lesssim\norm{\nabla_L\cA  W_{\neq}}_{L^2_tH^N}\norm{\cA(Q,W)_{\neq}}_{L^2_t H^N}\norm{(\widetilde{Q},\widetilde{W})_{0}}_{L^\infty_t H^N}\notag\\
    &\lesssim(\nu^{-\frac23}\eps)\eps^2.
\end{align}
To handle the double zero mode interactions in \eqref{eq:neq_Wtransport_bd5} we use that $\de_x\overline{U}^{j}_0=\de_z\overline{U}^{j}_0=0$ for $j\in\{1,3\}$ to deduce using \eqref{eq:reconstruction_m} that
\begin{align}
    \int_0^T \left| \l \cA|\nabla_L|W_{\neq},\cA\de_{4-r}(\overline{U}^{j}_{0}\de_{j} U^{r}_{\neq})\r_{H^N}\right|&\lesssim \norm{\nabla_L\cA  W_{\neq}}_{L^2_tH^N}\norm{\overline{U}^{j}_0}_{L^\infty_t H^N}\norm{\cA(Q,W)_{\neq}}_{L^2_t H^N}\\
    &\lesssim(\nu^{-\frac23}\eps)\eps^2.
\end{align}
Similarly, we establish \eqref{eq:neq_Wtransport_bd6} by also using Proposition \ref{prop:doublezero_estimates}, which gives
\begin{align}
    \int_0^T \left| \l \cA|\nabla_L|W_{\neq},\cA\de_{4-r}(U^2_{\neq}\de_y \overline{U}^{r}_{0})\r_{H^N}\right|&\lesssim \norm{\nabla_L\cA  W_{\neq}}_{L^2_t H^N}\norm{\cA Q_{\neq}}_{L^\infty_t H^N}\norm{\de_y\overline{U}^{r}_0}_{L^2_t H^N}\notag\\
    &\lesssim(\nu^{-\frac32}\eps^2+\alpha^{-\frac13}\nu^{-\frac83}\eps^3)\eps^2.
\end{align}
The proof is over.
\end{proof}

\addtocontents{toc}{\protect\setcounter{tocdepth}{0}}
\section*{Acknowledgments}
The research of MCZ was partially supported by the Royal Society URF\textbackslash R1\textbackslash 191492 and EPSRC Horizon Europe Guarantee EP/X020886/1. ADZ and KW gratefully acknowledge support of the SNSF through grant PCEFP2\_203059.

\addtocontents{toc}{\protect\setcounter{tocdepth}{1}}

\bibliographystyle{amsplain}
\bibliography{bib}
\end{document}